\newcommand{\thlab}[1]{\thlabel{#1}\label{#1 }}
   \def\sE{{\mathfrak E}}   
   \def\sH{{\mathfrak H}}   
   \def\sK{{\mathfrak K}}   \def\sL{{\mathfrak L}}
\def\sM{{\mathfrak M}}   \def\sN{{\mathfrak N}}   
      \def\sR{{\mathfrak R}}
      \def\sX{{\mathfrak X}}
\def\sY{{\mathfrak Y}}   
\def\st{{\mathfrak t}}
      \def\dC{{\mathbb C}}
   \def\dN{{\mathbb N}}
\def\cD{{\EuScript D}}
\def\cM{{\EuScript M}}
\def\bB{{\bf B}} 
\def\wt#1{{{\widetilde #1} }}
\def\bm\chi{\mbox{\boldmath$\chi$}}
\def\ker{{\rm ker\,}}
\def\ran{{\rm ran\,}}
\def\cran{{\rm \overline{ran}\,}}
\def\dom{{\rm dom\,}}
\def\mul{{\rm mul\,}}
\def\cmul{{\rm \overline{mu}l\,}}
\def\cdom{{\rm d\overline{om}\,}}
\def\clos{{\rm clos\,}}
\def\dim{{\rm dim\,}}
\let\xker=\ker \def\ker{{\xker\,}}
\def\spn{{\rm span\,}}
\def\supp{{\rm supp\,}}
\DeclareMathOperator{\hoplus}{\, \widehat \oplus \,}
\DeclareMathOperator{\hominus}{\, \widehat \ominus \,}
\newtheorem{theorem}{Theorem}[section]
\newtheorem{proposition}[theorem]{Proposition}
\newtheorem{corollary}[theorem]{Corollary}
\newtheorem{lemma}[theorem]{Lemma}
\theoremstyle{definition}
\newtheorem{example}[theorem]{Example}
\newtheorem{remark}[theorem]{Remark}
\newtheorem{definition}[theorem]{Definition}
\numberwithin{equation}{section}
\begin{document}

\title[Lebesgue type decompositions]
{Lebesgue type decompositions for linear relations and Ando's uniqueness criterion}
\author[S.~Hassi]{Seppo Hassi}
\author[Z.~Sebesty\'en]{Zolt\'an Sebesty\'en}
\author[H.S.V.~de~Snoo]{Henk de Snoo}

\address{Department of Mathematics and Statistics \\
University of Vaasa \\
P.O. Box 700, 65101 Vaasa \\
Finland}
\email{sha@uwasa.fi}

\address{Department of Applied Analysis \\
E\"otv\"os Lor\'and University\\
P\'azm\'any P\'eter s\'et\'any 1/C \\
1117 Budapest\\
Hungary} \email{sebesty@cs.elte.hu}

\address{Johann Bernoulli Institute for Mathematics and Computer Science \\
University of Groningen \\
P.O. Box 407, 9700 AK Groningen \\
Nederland}
\email{hsvdesnoo@gmail.com}

\dedicatory{To Professor Ando on the occasion of his 85th birthday}

\date{December 29, 2017}
\thanks{The research was partially supported by a grant from the Vilho,
Yrj\"o and Kalle V\"ais\"al\"a Foundation of the Finnish Academy of
Science and Letters.}

\keywords{Regular relations, singular relations, (weak) Lebesgue type decompositions,
uniqueness of decompositions, domination of relations and operators, closability}

\subjclass[2010]{Primary 4705, 47A06, 47A65; Secondary 28A12, 46N30,
47N30}

\begin{abstract}
A linear operator or, slightly more general, a linear relation
(i.e., a multivalued linear mapping) $T$ from a Hilbert space $\sH$
to a Hilbert space $\sK$ has Lebesgue type decompositions
$T=T_{1}+T_{2}$, where $T_{1}$ is a closable operator and $T_{2}$ is
an operator or relation which is singular. There is one canonical
decomposition, called the Lebesgue decomposition of $T$, whose
closable part is characterized by its maximality among all closable
parts in the sense of domination. All Lebesgue type decompositions
are parametrized, which also leads to necessary and sufficient
conditions for the uniqueness of such decompositions. Similar
results are given for weak Lebesgue type decompositions, where $T_1$
is just an operator without being necessarily closable. Moreover,
closability is characterized in different useful ways. In the
special case of range space relations the above decompositions may
be applied when dealing with pairs of (nonnegative) bounded
operators and nonnegative forms as well as in the classical
framework of positive measures.
\end{abstract}

\maketitle

\section{Introduction}\label{sec1}

In this paper it will be shown that certain notions from measure
theory such as absolute continuity and singularity have analogs for
singlevalued or multivalued linear operators between Hilbert
spaces. Moreover, it will be shown that these analogs have
an extremely simple structure making it possible to obtain
new (and old) results for the decomposition of pairs of operators and forms,
where one operator or form is decomposed with respect to the other
operator or form.  In particular, one may obtain corresponding results in the context
of a pair of measures.

To give a brief review on these fundamental notions, let $X$
be a set, let $\cM$ be a $\sigma$-algebra on it, and let $\lambda$
and $\mu$ be finite positive measures on the $\sigma$-algebra $\cM$.
Then the measure $\lambda$ is said to be \textit{absolutely
continuous} with respect to the measure $\mu$, if
\[
 E \in \cM, \quad \mu(E)=0 \quad \Rightarrow \quad \lambda(E)=0,
\]
and $\lambda$ is said to be \textit{singular}
with respect to $\mu$, if there exists a set $S \in
\cM$ such that
\[
 \lambda(X\setminus S)=0, \quad \mu(S)=0.
\]
A measure $\lambda$ on $(X,\cM)$ has
a unique Lebesgue decomposition into finite positive measures
$\lambda_r$ and $\lambda_s$:
\[
 \lambda=\lambda_r+\lambda_s,
\]
such that $\lambda_r$ is absolutely continuous with respect to $\mu$
and $\lambda_s$ is singular with respect to $\mu$. In the case of
finitely additive measures such decompositions still exist, but they
need not be unique anymore.  Furthermore, the notion of absolute
continuity may be weakened and even then there are similar
decompositions. For some results in this direction, see \cite{STT1,
STT2}.

In fact, it will be shown in the present paper that the context of
linear operators and linear relations from a Hilbert space $\sH$ to a Hilbert space $\sK$
forms the natural framework for such decompositions. Recall that a linear relation
from $\sH$ to $\sK$ is just a linear subspace of the product $\sH \times \sK$,
and that a linear operator is identified with its graph.
Let $T$ be a linear relation from a Hilbert
space $\sH$ to a Hilbert space $\sK$. Then $T$ is said to have an
orthogonal range decomposition
\[
T=T_{1}+T_{2}
\]
with linear relations $T_{1}$ and $T_{2}$ from $\sH$ to $\sK$ if
$\dom T_{1}=\dom T_{2}=\dom T$ and $\ran T_{1} \perp \ran T_{2}$.
Such an orthogonal range decomposition is called a distinguished
orthogonal range decomposition if the relation $T_{1}$ is an
operator. Note that if in this case $T$ is an operator, then
automatically $T_2$ is an operator. In order to cover the Lebesgue
decomposition of measures in this framework the distinguished
orthogonal range decompositions will be studied under the additional
assumption that the relation $T_2$ is singular,
 i.e., the closure of (the graph of) $T_{2}$ is a
product of closed linear subspaces in $\sH$ and $\sK$, respectively.
 One speaks of a \emph{weak Lebesgue type decomposition} if $T_{1}$
is an operator and $T_{2}$ is a singular relation (or operator).
Likewise one speaks of a \emph{Lebesgue type decomposition} if
$T_{1}$ is a regular relation (i.e., a closable operator) and
$T_{2}$ is a singular relation (or operator). It will be shown that
among all weak Lebesgue type decompositions there is precisely one
decomposition, called the \emph{weak Lebesgue decomposition}
\begin{equation}\label{Leb0}
 T=T_{\rm op}+T_{\rm mul},
\end{equation}
where the operator $T_{\rm op}$ admits a certain maximality property
among all operator parts of such decompositions. Similarly, among
all Lebesgue type decompositions there is precisely one
decomposition, called the \emph{Lebesgue decomposition}
\begin{equation}\label{Leb1}
T=T_{\rm reg}+T_{\rm sing},
\end{equation}
where the closable operator $T_{\rm reg}$ admits a maximality
property among all closable operator parts of such decompositions.

The notion of Lebesgue decomposition for linear operators goes back
to Jorgensen \cite{J} and was also considered by \^Ota \cite{Ota1,
Ota3}. The Lebesgue decomposition of Jorgensen was put in the
context of linear relations in \cite{HSeSnSz} and \cite{HSnSz}. The
present work is a continuation of these papers and of parts in
\cite{HSeSnLeb}. The main interest is in the new concepts of
Lebesgue type and weak Lebesgue type decompositions in the general
setting of linear relations. Since the weak Lebesgue decomposition
\eqref{Leb0} and the Lebesgue decomposition \eqref{Leb1} are
well-defined, the question is how they relate to the other
 weak Lebesgue type and Lebesgue
type decompositions. Among the main results are descriptions of all
possible Lebesgue type and weak Lebesgue type decompositions of $T$,
the characteristic properties of the operator $T_{\rm reg}$ in the
Lebesgue decomposition \eqref{Leb1}, and the operator $T_{\rm op}$
in the weak Lebesgue decomposition \eqref{Leb0} of $T$, as well as
uniqueness theorems offering some necessary and sufficient
conditions for the (weak) Lebesgue decomposition to be the only
(weak) Lebesgue type decomposition of $T$. The uniqueness result was
inspired by a similar result of Ando in a more special situation,
cf. \cite{An}. On the other hand, the maximality properties that
will be established on the components $T_{\rm op}$ and $T_{\rm reg}$
in \eqref{Leb0} and \eqref{Leb1} rely on the concept of
\emph{domination} for (unbounded) operators and relations as
developed in \cite{HSn2015}.

The motivation for the present study of Lebesgue and weak Lebesgue
type decompositions of linear relations comes from various
applications which can be embedded properly in the present general
framework. Only a brief discussion is appropriate here; for the
details, see \cite{HSn2018}. First of all it will be convenient to
restrict the general class of linear relations as linear subspaces
of the product space $\sH \times \sK$. Recall that a linear subspace
of a Hilbert space is called an operator range if it coincides with
the range of a bounded linear operator between Hilbert spaces. The
notion of an operator range extends the notion of a closed linear
subspace. In particular, a linear relation $T$ from a Hilbert space
$\sH$ to a Hilbert space $\sK$ is said to be a \textit{range space
relation}  if $T=\ran C$ for some bounded operator
$C\in\mathbf{B}(\sE, \sH\times\sK)$, where $\sE$ is a Hilbert space;
the notion of a range space relation extends the notion of a closed
relation. In fact, the operator $C$ induces the following
representation for $T$:
\begin{equation}\label{ToperRange}
 T=\left\{\, Ch\in\sH\times \sK:\, h \in \sE\right\}=\left\{\{Ah,Bh\} :\, h \in \sE\right\},
\end{equation}
involving the component mappings $A \in \mathbf{B}(\sE, \sH)$ and $B
\in \mathbf{B}(\sE, \sK)$.  Note that as a direct consequence of
this representation also $\dom T$ and $\ran T$ are operator ranges
in $\sH$ and $\sK$, respectively. The (weak) Lebesgue type
decompositions of a relation $T$ take an interesting form when $T$
is a range space relation. The results of the present paper are now
reflected in the properties of the pair of bounded operators $A$ and
$B$. In particular, the operator $B$ can be decomposed via an
operator sum decomposition
\[
 B=B_{1}+B_{2},
\]
where $B_{1}$ is almost dominated by $A$ and $B_{2}$ is singular
with respect to $A$. All such decompositions can be parametrized and
there is a uniqueness result. Under some additional conditions on
$A$ and $B$, such decompositions have been investigated in Izumino
\cite{I89a}. Furthermore, it should be remarked that in this
situation of a range space relation there is now a Radon-Nikodym
derivative. When $T$ is represented by $A \in \mathbf{B}(\sE, \sH)$
and $B \in \mathbf{B}(\sE, \sK)$ as above and $B$ has the
corresponding Lebesgue type decomposition, then the Radon Nikodym
derivative is the operator expressing the operator $B_1$ in terms of
the operator $A$. This can be seen as the proper analog of the
corresponding notion in measure theory; cf. \cite{HSn2018}.

For a more special application consider the case of a pair bounded
nonnegative operators $A$ and $B$, which was first studied by Ando
\cite{An}. This particular situation can be set up in the same
framework in an analogous way.
 Hence also the results concerning
Ando's decompositions for pairs of nonnegative operators
are direct consequences of the results in the present paper.
The necessary and sufficient uniqueness conditions as formulated by Ando follow
from the general relation case.
Furthermore, after associating appropriate Hilbert spaces to a pair
of nonnegative forms, the results on Lebesgue type decomposition for
pairs of nonnegative forms in \cite{HSeSnLeb} can be seen as a
special case; see \cite{S3} for Lebesgue decompositions of forms.
 Of course, Lebesgue decomposition
results for measures and the corresponding Radon-Nikodym derivatives
can be recovered from the corresponding results
on linear relations; cf. \cite{HSeSnLeb,STT1,STT2}. For a complete
treatment of these results and applications see \cite{HSn2018},
where also a  more complete list of references may be found.

Here is an overview of the paper. Section \ref{sec2} is a
preliminary section about linear relations and the notions of
regular and singular relations. In Section \ref{sec3} orthogonal
range decompositions of linear relations are defined and a criterion
is given so that one of the summands is a (regular) operator or a
singular relation. Section \ref{sec4} contains on overview on the
various aspects of the Lebesgue decomposition into its regular and
singular parts. In Section \ref{sec5} Lebesgue type decompositions
of relations are introduced  and parametrizations of all such
decompositions are studied including some criteria leading to
one-to-one parametrizations. Necessary and sufficient conditions for
a Lebesgue type decomposition to be unique can be found in Section
\ref{sec6}. These uniqueness results can be seen as a straighforward
consequence of the given parametrization. A treatment of weak
Lebesgue type decompositions with an associated uniqueness criterion
is given in Section \ref{sec7}. The notion of domination for the
present context of unbounded operators and linear relations plays a
crucial role in Section \ref{sec8}. This is a key notion for
deriving the maximality property for the operator parts $T_{\rm
reg}$ and $T_{\rm op}$ of a relation $T$; they single out optimality
of the decompositions \eqref{Leb0} and \eqref{Leb1}. Furthermore,
the notion of domination is used to establish a general criterion
for the closability of an operator. This section also contains a
metric criterion for an operator to be closable.

\section{Preliminaries}\label{sec2}

This section contains the necessary ingredients about relations
which are needed in establishing the basic decompositions that will
be investigated in later sections of the paper. First conditions
will be given so that a dense subspace of a Hilbert space has a
dense intersection with a given closed subspace.

\subsection{A denseness result}
Recall the definition of an operator range which is a special
subspace of a Hilbert space; cf. \cite{FW}.

\begin{definition}
A subspace $\sR$ of a Hilbert space $\sH$ is said to be an operator
range (for short, a range space) if there exists a Hilbert space
$\sE$ and an operator $B \in \bB(\sE,\sH)$ such that $\sR=\ran B$.
\end{definition}

The next lemma is involved with the following situation. Let $\sM$
be a closed subspace of a Hilbert space $\sH$ and let $\sR \subset
\sM$ be a subspace which is dense in $\sM$. Note that for any closed
subspace $\sL$ of $\sH$ it follows that
\[
 P_{\sL}\, \sR \mbox{ dense in } P_{\sL}\,\sM,
\]
so that these two subspaces have the same closure.
Here $P_\sL$ stands for the orthogonal projection from $\sH$ onto $\sL$.
The question is when the subspace
$\sR \cap \ker P_\sL$ is dense in the subspace $\sM \cap \ker P_\sL$.

\begin{lemma}\label{nneeww}
Let $\sM$ and $\sL$ be closed subspaces of the Hilbert space $\sH$
and let $\sR \subset \sM$ be a dense subspace of $\sM$. Assume, in
addition, that $\sR$ is an operator range, then the identity
\begin{equation}\label{PdomT*0}
P_{\sL}\,\sR=\sL
\end{equation}
implies
\begin{equation}\label{PdomT*0+}
\sM \cap \ker P_{\sL}=\clos( \sR \cap \ker P_{\sL}).
\end{equation}
Furthermore, if the subspace $\sL$ is finite-dimensional then every
dense subspace $\sR$ of $\sM$ for which $P_{\sL}\,\sR$ is dense in
$\sL$ satisfies the identities \eqref{PdomT*0} and \eqref{PdomT*0+}.
\end{lemma}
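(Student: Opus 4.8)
The plan is to reduce \eqref{PdomT*0+} to a single nontrivial inclusion and then to exploit the operator range structure of $\sR$ by way of the open mapping theorem. Since $\ker P_{\sL}=\sL^{\perp}$, identity \eqref{PdomT*0+} reads $\sM \cap \sL^{\perp}=\clos(\sR \cap \sL^{\perp})$. One inclusion is automatic: from $\sR \subseteq \sM$ one gets $\sR \cap \sL^{\perp} \subseteq \sM \cap \sL^{\perp}$, and the right-hand side is closed, being an intersection of two closed subspaces, so $\clos(\sR \cap \sL^{\perp}) \subseteq \sM \cap \sL^{\perp}$. The whole content therefore lies in the reverse inclusion.

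To approximate a given $x \in \sM \cap \sL^{\perp}$ by elements of $\sR \cap \sL^{\perp}$, I would first use the density of $\sR$ in $\sM$ to choose $r_{n} \in \sR$ with $r_{n} \to x$ in $\sH$. Each $r_{n}$ carries an unwanted component $P_{\sL} r_{n} \in \sL$, but $P_{\sL} r_{n} \to P_{\sL} x=0$ by continuity of $P_{\sL}$. The natural idea is to subtract a correction $s_{n} \in \sR$ with $P_{\sL} s_{n}=P_{\sL} r_{n}$, which exists precisely because \eqref{PdomT*0} gives $P_{\sL}\sR=\sL$; then $r_{n}-s_{n} \in \sR \cap \sL^{\perp}$. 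The hard part is that an arbitrary such $s_{n}$ need not tend to $0$, so $r_{n}-s_{n}$ need not converge to $x$; this is exactly where the hypothesis that $\sR$ is an operator range must enter.

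To control the corrections, write $\sR=\ran B$ with $B \in \bB(\sE,\sH)$ and equip $\sR$ with its range norm, making it a Hilbert space that embeds continuously into $\sH$. Then $P_{\sL}\uphar \sR$ is a bounded operator from $\sR$ into $\sL$ whose range, by \eqref{PdomT*0}, is all of $\sL$, so by the open mapping theorem there is a constant $C>0$ such that every $\ell \in \sL$ admits $r \in \sR$ with $P_{\sL} r=\ell$ and $\|r\|_{\sR} \le C\|\ell\|$. Taking $\ell=P_{\sL} r_{n}$ furnishes $s_{n} \in \sR$ with $P_{\sL} s_{n}=P_{\sL} r_{n}$ and $\|s_{n}\|_{\sR}\le C\|P_{\sL} r_{n}\|$; since the embedding $\sR \hookrightarrow \sH$ is bounded and $\|P_{\sL} r_{n}\|\to 0$, it follows that $s_{n}\to 0$ in $\sH$. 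Hence $r_{n}-s_{n}\in \sR \cap \sL^{\perp}$ with $r_{n}-s_{n}\to x$, which gives $x \in \clos(\sR \cap \sL^{\perp})$ and completes the reverse inclusion.

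For the final assertion I would note that $P_{\sL}\sR$ is a linear subspace of the finite-dimensional space $\sL$, hence closed; so if $P_{\sL}\sR$ is merely dense in $\sL$ it already equals $\sL$, i.e.\ \eqref{PdomT*0} holds automatically, and the first part then yields \eqref{PdomT*0+}. The single genuine obstacle throughout is the uniform bound on the corrections $s_{n}$: completeness of the range norm, equivalently the assumption that $\sR$ is an operator range rather than an arbitrary dense subspace, is indispensable for invoking the open mapping theorem here.
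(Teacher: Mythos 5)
Your proof of the main implication (operator range case) is correct, and it takes a genuinely different route from the paper. You equip $\sR=\ran B$ with its range norm, making $(\sR,\|\cdot\|_{\sR})$ a Hilbert space continuously embedded in $\sH$, and apply the open mapping theorem to the bounded surjection $P_{\sL}\uphar\sR \colon \sR\to\sL$ to obtain uniformly controlled corrections $s_{n}$; the paper instead invokes the Fillmore--Williams decomposition theorem \cite[Theorem~2.4]{FW} to extract a \emph{closed} subspace $\sR_{1}\subset\sR$ with $\sH=\ker P_{\sL}\,\dot{+}\,\sR_{1}$, and then uses positivity of the angle in this direct sum to see that the splitting $\sR=(\sR\cap\ker P_{\sL})\,\dot{+}\,\sR_{1}$ survives taking closures. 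Your argument is more self-contained, needing only the standard open mapping theorem rather than the structure theory of operator ranges; the paper's argument buys a structural byproduct, namely the closed complement $\sR_{1}$ sitting inside $\sR$, which is precisely what lets it handle the finite-dimensional case by the same mechanism.

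The final assertion, however, has a genuine gap in your write-up. There $\sR$ is an \emph{arbitrary} dense subspace of $\sM$ --- the operator range hypothesis is dropped --- yet you conclude by saying that ``the first part then yields \eqref{PdomT*0+}''. The hypotheses of the first part are not satisfied: a dense subspace need not be an operator range (for instance a dense hyperplane never is, since a non-closed operator range has infinite codimension in its closure, cf.\ \cite[Theorem~2.3]{FW}), and your own closing remark that completeness of the range norm is ``indispensable'' for the open mapping argument makes this appeal circular. The repair is short and stays within your method: when $\dim\sL<\infty$, choose a basis $e_{1},\dots,e_{k}$ of $\sL$ and preimages $u_{i}\in\sR$ with $P_{\sL}u_{i}=e_{i}$ (possible once $P_{\sL}\sR=\sL$); the linear map $\sL\to\sR$ sending $\sum c_{i}e_{i}$ to $\sum c_{i}u_{i}$ is automatically bounded because $\sL$ is finite-dimensional, so it produces corrections $s_{n}\to 0$ with $P_{\sL}s_{n}=P_{\sL}r_{n}$ and no open mapping theorem is needed. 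This is essentially how the paper argues: finite-dimensionality allows one to exhibit the closed complement $\sR_{1}\subset\sR$ directly, after which the general argument applies verbatim.
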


\begin{proof}
The following decomposition is immediate for any subspace $\sR$
which satisfies the condition \eqref{PdomT*0}:
\begin{equation}\label{EEN}
\sH={P_\sL}^{-1}(\sL)=(P_{\sL})^{-1} (P_{\sL} \,\sR) =\ker P_\sL + \sR.
\end{equation}
If, in addition, $\sR$ is an operator range then it is known that
there exist a closed subspace $\sR_{0} \subset \sR$ and a closed
subspace $\sX_{0} \subset \ker P_{\sL}$ such that $\sH=\sX_{0}
+\sR_{0}$, a direct sum; see e.g. \cite[Theorem~2.4]{FW}. Thus in
this case \eqref{EEN} leads to the existence of a closed subspace
$\sR_0 \subset \sR$ such that
\begin{equation}\label{TWEE}
 \sH=\ker P_\sL + \sR_{0},
\end{equation}
where the sum is not necessarily direct.
It is clear that
\begin{equation}\label{DRIE}
 \sR_{0}=(\ker P_\sL\cap \sR_{0})  \oplus  ( \sR_{0}\ominus (\sR_{0} \cap \ker P_\sL)).
\end{equation}
Combining \eqref{TWEE} and \eqref{DRIE} one gets the sum decomposition
\begin{equation}\label{VIER}
 \sH=\ker P_\sL \,+\, ( \sR_{0}\ominus (\sR_{0} \cap \ker P_\sL)),
\end{equation}
and it is clear that this decomposition is direct. Introduce the
closed subspace
\[
\sR_{1}:=\sR_{0}\ominus (\sR_{0} \cap \ker P_\sL) \subset  \sR_{0}.
\]
Thus it follows from \eqref{VIER} that the closed subspace $\sR_1$ satisfies
\begin{equation}\label{VIJF}
  \sH=\ker P_\sL \,\dot{+}\, \sR_{1}, \quad \sR_1\subset \sR.
\end{equation}
Next it is shown that the decomposition \eqref{VIJF} implies the
property \eqref{PdomT*0+}. Indeed, due to $\sR_1\subset \sR$ it is a consequence
of \eqref{VIJF} that $\sR$ has the decomposition
\begin{equation}\label{Rdec}
 \sR= (\sR \cap \ker P_{\sL} ) \,\dot{+}\,
 \sR_{1}.
\end{equation}
By the positivity of the angle between the subspaces $\ker P_\sL$
and $\sR_1$ in \eqref{VIJF}, the identity \eqref{Rdec} implies that
\[
 \sM=\clos \sR
 =\overline{(\sR \cap \ker P_\sL  ) } \,\dot{+}\, \sR_1
\]
and, consequently, due to the definition of $\sR_{1}\,(\subset\sM)$,
\[
 \sM \cap \ker P_\sL =\overline{\sR\cap \ker P_\sL},
\]
which is the property \eqref{PdomT*0+}.

Now assume that $\sL$ is finite-dimensional and that $P_\sL \sR$ is
dense in $\sL$. Since $\sL$ is finite-dimensional the subspace
$P_\sL \sR$ is closed and thus \eqref{PdomT*0} holds. On the other
hand, when $\dim \sL <\infty$ then one can immediately find a closed
subspace $\sR_1 \subset \sR$ with $\dim \sR_1=\dim \sL$ such that
$P_\sL(\sR_1)=\sL$; this gives \eqref{VIJF} directly. As shown above
this implies the identity \eqref{PdomT*0+}.
\end{proof}

The above lemma extends the following known result; cf. e.g.
\cite{GK}.

\begin{corollary}\label{GKcor}
Let $\sN$ be a closed subspace of a Hilbert space $\sH$ and assume
that its orthogonal complement $\sL=\sH\ominus \sN$ is
finite-dimensional. Then every linear subspace $\sR$ which is dense
in $\sH$ has a dense intersection with $\sN$:
\[
\sN =\clos( \sR \cap \sN).
\]
\end{corollary}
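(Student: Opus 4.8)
The plan is to derive the corollary as a direct application of Lemma~\ref{nneeww}. The setting is $\sN$ closed with finite-dimensional orthogonal complement $\sL=\sH\ominus\sN$, and $\sR$ dense in $\sH$. I would match this to the lemma by taking the full space $\sH$ in the role of the lemma's closed subspace $\sM$; since $\sR$ is dense in $\sH$, the hypothesis ``$\sR\subset\sM$ dense in $\sM$'' holds with $\sM=\sH$. The target conclusion $\sN=\clos(\sR\cap\sN)$ must then be identified with the lemma's conclusion \eqref{PdomT*0+}.

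The key identification is that $\sN=\ker P_\sL$. Indeed, $P_\sL$ is the orthogonal projection onto $\sL=\sH\ominus\sN$, so its kernel is exactly $\sL^\perp=\sN$. With this, the lemma's identity $\sM\cap\ker P_\sL=\clos(\sR\cap\ker P_\sL)$ becomes $\sH\cap\sN=\clos(\sR\cap\sN)$, i.e. $\sN=\clos(\sR\cap\sN)$, which is precisely the assertion to be proved. So the entire content reduces to verifying that the lemma applies.

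To invoke the lemma I would use its second (finite-dimensional) clause, which is tailored to exactly this situation and does not require $\sR$ to be an operator range. That clause states: if $\sL$ is finite-dimensional and $P_\sL\,\sR$ is dense in $\sL$, then both \eqref{PdomT*0} and \eqref{PdomT*0+} hold. Here $\sL$ is finite-dimensional by hypothesis, so the only remaining point is that $P_\sL\,\sR$ is dense in $\sL$. This follows from the denseness of $\sR$ in $\sH$ together with the continuity of $P_\sL$: since $P_\sL\,\sH=\sL$ and $\sR$ is dense in $\sH$, the image $P_\sL\,\sR$ is dense in $P_\sL\,\sH=\sL$ (and in fact equals $\sL$ because $\sL$ is finite-dimensional, hence $P_\sL\,\sR$ is closed). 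Applying the lemma then yields \eqref{PdomT*0+}, which is the desired conclusion.

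I do not anticipate a genuine obstacle here, since the corollary is essentially a special case of the lemma. The one point requiring a moment's care is the correct dictionary between the two statements, namely setting $\sM=\sH$ and recognizing $\ker P_\sL=\sN$; once this translation is in place the argument is immediate from the finite-dimensional part of Lemma~\ref{nneeww}. In particular, no operator-range assumption on $\sR$ is needed, which is why the corollary can be stated for an arbitrary dense subspace.
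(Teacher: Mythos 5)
Your proposal is correct and follows exactly the paper's own argument: apply the second (finite-dimensional) clause of Lemma~\ref{nneeww} with $\sM=\sH$ and $\sL=\sH\ominus\sN$, observe that $P_\sL\,\sR$ is dense in $\sL$ because $\sR$ is dense in $\sH$, and identify $\ker P_\sL=\sN$ to read off the conclusion. The only difference is that you spell out a few routine details (continuity of $P_\sL$, closedness of $P_\sL\,\sR$ in finite dimensions) that the paper leaves implicit.
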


\begin{proof}
Notice that if $\sN$ is a closed subspace of $\sH$, then one may
apply Lemma~\ref{nneeww} with $\sM=\sH$ and $\sL=\sH\ominus \sN$.
Then clearly $P_\sL\,\sM=\sL$ and $P_{\sL}\,\sR$ is dense in $\sL$.
Now by the second part of Lemma~\ref{nneeww} the equalities
\eqref{PdomT*0} and \eqref{PdomT*0} hold. Since here $\ker
P_{\sL}=\sN$ and $\sR \cap \ker P_{\sL}=\sR \cap \sN$, the proof is
complete.
\end{proof}

Lemma~\ref{nneeww} will be used in Section \ref{sec5} to produce
Lebesgue type decompositions for unbounded operators and linear
relations $T$ which differ from the Lebesgue decomposition of $T$ in
\eqref{Leb1} when $\dom T^*$ is not closed.

\subsection{Linear relations}\label{sec2.2}

A general treatment of linear relations as an extension of the
notion of linear operator goes back to \cite{Arens}. Here a few
preliminary facts are recalled; for more see for instance
\cite{HSnSz}. A linear relation (or relation for short) $T$ from a
Hilbert space $\sH$ to a Hilbert space $\sK$ is a linear subspace of
the product $\sH \times \sK$. Its domain, range, kernel, and
multivalued part are denoted by $\dom T$, $\ran T$, $\ker T$, and
$\mul T$.  A relation is (the graph of) an operator if and only if
$\mul T=\{0\}$. The \textit{inverse} $T^{-1}$ of a linear relation
$T$ is defined as $T^{-1}=\{\,\{g,f\} :\,\{f,g\} \in T\,\}$. A
relation $T$ from a Hilbert space $\sH$ to a Hilbert space $\sK$  is
said to be \textit{closed} if it is closed as a subspace of the
product space $\sH \times \sK$. For a relation $T$ the adjoint $T^*$
is given by
\begin{equation}\label{adjo}
T^*=JT^\perp=(JT)^{\perp},
\end{equation}
where $J\{f,f'\}=\{f',-f\}$, $\{f,f'\} \in \sH \times \sK$; hence
$T^*$ is automatically a closed linear relation from
 $\sK$ to  $\sH$. Thus
\begin{equation}\label{adjoo}
T^*=\{\,\{h,k\} \in \sK \times \sH :\, (g,h)=(f,k) \mbox{ for all } \{f,g\} \in T\,\}.
\end{equation}
The definition of the adjoint leads to $T^*=(JT)^\perp$,
so that $T^{**}=T^{\perp \perp}$ and
\begin{equation}\label{grij0}
 \overline{T}=T^{**},
\end{equation}
where $\overline{T}$ is the closure of the linear relation $T$.
Hence a relation $T$ is closed  precisely when  $T^{**}=T$.
 It is straightforward to check the following   identities
\[
 (\dom T)^{\perp}=\mul T^{*}, \quad  (\dom T^{*})^{\perp}=\mul T^{**},
 \quad  (\dom T^{**})^{\perp}=\mul T^{*},
\]
\[
 (\ran T)^{\perp}=\ker T^{*}, \quad  (\ran T^{*})^{\perp}=\ker T^{**},
 \quad  (\ran T^{**})^{\perp}=\ker T^{*}.
\]
Note that \eqref{grij0} leads to the identity $\sH\times\sK=\overline{T} \oplus
T^\perp=T^{**} \oplus JT^*$, so that there are also
nonorthogonal decompositions of the Hilbert spaces:
\begin{equation}\label{eqq1}
 \sH=\dom T^{**}+\ran T^*,\quad \sK=\dom T^*+\ran T^{**}.
\end{equation}
 A general principle shows that $ \dom T^{**} \subset \sH$
and $\dom T^{*} \subset \sK$ are simultaneously closed,
and that $ \ran T^{**} \subset \sK$
and $\ran T^{*} \subset \sH$ are simultaneously closed.

For relations $T_1$ and $T_2$  from $\sH$ to $\sK$ the \textit{sum}
of $T_1$ and $T_2$ is a relation from $\sH$ to $\sK$ defined by
\begin{equation}\label{Tdecomp00}
  T_1+T_2=\{\,\{f,h+k\} :\, \{f,h\} \in T_1, \,\, \{f,k\} \in T_2\,\}.
\end{equation}
If $T=T_{1}+T_{2}$ then clearly
$\dom T=\dom T_{1} \cap \dom T_{2}$, while
\begin{equation}\label{rm1}
  \ran T \subset \ran T_{1} + \ran T_{2}
  \quad \mbox{and} \quad \mul T=\mul T_{1} + \mul T_{2}.
\end{equation}
When $T=T_{1}+T_{2}$, it will be no restriction to consider $T_1$
and $T_2$ together with $T$ on their joint domain $\dom T$.

Let $T_1$ be a relation from a Hilbert space $\sM$ to a Hilbert space $\sK$ and
let $T_2$ be a relation  from the Hilbert space $\sH$ to $\sM$.
Then the \textit{product} $T_1T_2$ is a relation from $\sH$ to $\sK$ defined  by
\begin{equation}\label{prod0}
 T_1T_2=\{\,\{f,f'\} \in \sH \times \sK :\, \{f,\varphi\} \in T_2, \, \{\varphi,f'\} \in T_1,
 \mbox{ for some }  \varphi \in \sM\,\}.
\end{equation}
If $T_{1}$ is an operator then the above definition \eqref{prod0}
can be written as
\begin{equation}\label{prod0+}
T_{1}T_{2}=\{ \{f,T_{1}\varphi \} :\, \{f, \varphi\} \in T_{2}\},
\end{equation}
while if $T_{2}$ in an operator, then \eqref{prod0} can be written
as
\begin{equation}\label{prod0++}
 T_{1}T_{2}=\{ \{f,f'\} : \{T_{2}f, f'\}  \in T_{1}, \; f\in \dom T_{2}\}.
\end{equation}
For the adjoint of the product one has
 \begin{equation}\label{prod1}
 T_2^*T_1^* \subset (T_1T_2)^*,
\end{equation}
with equality when $T_1 \in \bB(\sM,\sK)$, the class of all bounded
everywhere defined operators from $\sM$ to $\sK$.

Assume that $T$ is a closed linear relation from a Hilbert space
$\sH$ to a Hilbert space $\sK$. Then the linear subspace $\mul T$ is
closed and let $P$ be the orthogonal projection from $\sK$ onto
$\mul T$. Define the operator part $T_{\rm s}$ by
\[
 T_{\rm s}=(I-P)T=\{\{f, (I-P)f'\} :\, \{f,f'\} \in T\}.
\]
Then it is clear that $T_{\rm s} \subset T$ and that
\[
 T=T_{\rm s} \hoplus (\{0\} \times \mul T),
\]
which is a componentwise orthogonal sum of the graph of the closed linear operator $T_{\rm s}$
and the purely multivalued closed relation $\{0\} \times \mul T$. The operator $T_{\rm s}$ is called
the orthogonal operator part of $T$; cf. \cite{Arens}.

Assume that $T$ is a closed operator from $\sH$ to $\sK$. Then
$T^*T$ is a nonnegative relation in $\sH$. To see this let $\{f,f'\} \in T^*T$; the by the definition of the
product one sees that  $\{f,h\} \in T$ and $\{h,f'\} \in T^*$ for some $h \in \sK$. This leads to
\[
 (f',f)=\|h\|^2 \geq 0,
\]
which means that the relation $T^*T$ is nonnegative. In order to
show that $T^*T$ is selfadjoint, it suffices to show that $\ran
(T^*T+I)=\sH$. Let $h \in \sH$, then there is a unique decomposition
\[
 \{h,0\} =\{\varphi,\varphi'\}+\{\psi,\psi'\}, \quad
 \{\varphi,\varphi'\} \in T, \quad \{\psi,\psi'\} \in
 T^\perp=JT^*,
\]
since $\sH^2=T \oplus T^\perp$. Hence
\[
 h=\varphi+\psi, \quad \varphi'+\psi'=0,
\]
which leads to $\{\psi,\psi'\}=\{\psi,-\varphi'\}\in JT^*$ and
$\{\varphi',\psi\} \in T^*$. Therefore, $\{\varphi,\psi\} \in
T^*T$ and
\[
 \{\varphi,h\}=\{\varphi,\varphi+\psi\} \in T^*T+I,
\]
so that $h \in \ran (T^*T+I)$. Thus $\ran (T^*T+I)=\sH$.
Hence it follows that $T^*T$ is a nonnegative selfadjoint relation in $\sH$.
Likewise it is not difficult to see that $\mul T^*T=\mul T^*$. This means that
\[
 T^*T=(T^*T)_{\rm s} \hoplus (\{0\} \times \mul T^*),
\]
so that $(T^*T)_{\rm s}$ is a densely defined nonnegative selfadjoint operator
in the Hilbert space
\[
\sH \ominus \mul T^*=\cdom T^{**}=\cdom T,
\]
which follows from the above identities after \eqref{grij0}. Therefore
$(T^*T)_{\rm s}$ has the representation
\[
(T^*T)_{\rm{s}}^{1/2}=\int_0^\infty \lambda\,dE_\lambda,
\]
where $E(\lambda)$ is a family of orthogonal projections in the Hilbert space $\cdom T$.

\subsection{Linear relations and orthogonal projections}

The following result shows that the product $QT$ of a relation $T$ and an
orthogonal projection can be used to decompose $T$ when
$\mul T$ is invariant under $Q$.

\begin{lemma}\label{orth+}
Let $T$ be a  relation from the Hilbert space $\sH$ to the Hilbert
space $\sK$ and let $Q$ be an orthogonal projection
from $\sK$ onto some
closed subspace of $\sK$. Then
\begin{equation}\label{incll}
 T \subset (I-Q)T+QT,
\end{equation}
and, moreover,
\begin{equation}\label{incl0}
 T = (I-Q)T+QT \quad \Leftrightarrow \quad Q \,\mul T \subset \mul T.
\end{equation}
In this case, $\mul T=(I-Q)\mul T + Q\mul T$.
\end{lemma}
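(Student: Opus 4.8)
The plan is to prove the inclusion \eqref{incll} first, then the equivalence \eqref{incl0}, and finally the multivalued identity. For the inclusion, I would take an arbitrary element $\{f,f'\}\in T$ and try to write $f'=(I-Q)f'+Qf'$ in a way that exhibits $\{f,f'\}$ as a sum of an element of $(I-Q)T$ and an element of $QT$. By the definition of the sum \eqref{Tdecomp00}, it suffices to note that $\{f,(I-Q)f'\}\in (I-Q)T$ and $\{f,Qf'\}\in QT$, both arising from the \emph{same} pair $\{f,f'\}\in T$; their sum gives back $\{f,f'\}$ since $(I-Q)f'+Qf'=f'$. This is essentially immediate from the product description \eqref{prod0+}, so \eqref{incll} should require only a line.

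For the equivalence \eqref{incl0}, the nontrivial direction is to understand when equality holds in \eqref{incll}. The reverse inclusion $(I-Q)T+QT\subset T$ is the real content. I would take a typical element of $(I-Q)T+QT$: by \eqref{Tdecomp00} it has the form $\{f,(I-Q)g'+Qh'\}$ where $\{f,g'\}\in T$ and $\{f,h'\}\in T$ share the domain point $f$. The difference $g'-h'$ then satisfies $\{0,g'-h'\}\in T$, i.e.\ $g'-h'\in\mul T$. Writing $(I-Q)g'+Qh'=g'-Q(g'-h')$, one sees that this lies in the second coordinate over $f$ precisely when $Q(g'-h')\in\mul T$. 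Thus equality in \eqref{incll} holds iff $Q(\mul T)\subset\mul T$: the forward direction follows by testing on elements $\{0,m\}$ with $m\in\mul T$ (for which $\{0,Qm\}$ must lie in $T$, forcing $Qm\in\mul T$), and the backward direction follows because $Q(g'-h')\in\mul T$ lets one absorb the correction term back into $T$. I expect the main obstacle to be bookkeeping: carefully tracking that the two generators of the sum are taken over a common domain point $f$, and handling the multivalued corrections cleanly, rather than any deep idea.

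For the final claim, assuming $Q\,\mul T\subset\mul T$, I would establish $\mul T=(I-Q)\mul T+Q\mul T$. Here the subspace $\mul T$ is $Q$-invariant, hence also $(I-Q)$-invariant, so both $Q\,\mul T$ and $(I-Q)\,\mul T$ are contained in $\mul T$, giving the inclusion $\supset$. Conversely, any $m\in\mul T$ decomposes as $m=(I-Q)m+Qm$ with $(I-Q)m\in(I-Q)\mul T$ and $Qm\in Q\mul T$, giving $\subset$. Alternatively, one can read this identity off from \eqref{rm1} applied to the decomposition $T=(I-Q)T+QT$, using that $\mul((I-Q)T)=(I-Q)\mul T$ and $\mul(QT)=Q\mul T$ under the invariance hypothesis. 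Either route is routine once the equivalence \eqref{incl0} has been secured.
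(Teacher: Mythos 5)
Your proposal is correct and follows essentially the same route as the paper's proof: the same one-line verification of \eqref{incll}, the same test on elements $\{0,m\}$, $m\in\mul T$, for the forward direction of \eqref{incl0}, the same absorption of the correction term $\{0,Q(g'-h')\}$ into $T$ for the backward direction, and the final identity via \eqref{rm1} (your second route) or the equivalent direct decomposition. No gaps.
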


\begin{proof}
The inclusion \eqref{incll} is clear, for if $\{f,g\} \in T$, then
\[
 \{f,g\}=\{f, (I-Q)g+Qg\}, \quad \{f,(I-Q)g\} \in (I-Q)T,
 \quad  \{f,Qg\} \in QT;
\]
cf. \eqref{prod0+} and \eqref{Tdecomp00}.
Now the characterization of the identity \eqref{incl0} will be shown.

($\Rightarrow$) Assume that $(I-Q)T+QT \subset T$. If $\{0,g\} \in
T$, then it follows that $\{0,Qg\} \in (I-Q)T+QT \subset T$, so that
$Qg \in \mul T$. Hence $Q \,\mul T \subset \mul T$.

($\Leftarrow$) Let $\{f,g\} \in T$ and $\{f,g'\} \in T$, then
$\{0,g-g'\} \in T$. Hence $\{0, Q(g-g')\} \in T$ and, therefore
\[
 \{f, (I-Q)g+Qg'\}=\{f,g\} -\{0, Q(g-g')\} \in T,
\]
so that $(I-Q)T+QT \subset T$.

The last statement is obtained by applying the identity in \eqref{rm1}.
\end{proof}

Notice that if $T$ itself is (the graph of) an operator, then also
$QT$ and $(I-Q)T$ are (the graphs of) operators. The observations in
the next lemma will be helpful in the rest of the paper.

\begin{lemma}\label{PT}
Let $T$ be a relation from $\sH$ to $\sK$
and let $Q$ be an orthogonal projection in $\sK$.
The adjoint of the relation $QT$ is given by
\begin{equation}\label{PT00}
(QT)^{*}=T^{*}Q,
\end{equation}
and its domain is given by
\begin{equation}\label{PT0}
 \dom T^*Q=\ker Q \oplus (\dom T^* \cap \ran Q),
\end{equation}
while its kernel is given by
\begin{equation}\label{PT0k}
 \ker T^*Q=\ker Q \oplus (\ker T^* \cap \ran Q).
\end{equation}
In particular, $T^{*}Q$ is densely defined in $\sK$ if and only if
\[
 \clos (\dom T^* \cap \ran Q) =\ran Q,
\]
and $\dom T^{*}Q =\sK$ if and only if
\[
\ran Q \subset \dom T^{*}.
\]
\end{lemma}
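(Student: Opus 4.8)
The plan is to establish the adjoint formula \eqref{PT00} first, and then derive the domain and kernel descriptions \eqref{PT0} and \eqref{PT0k} as direct consequences, together with the final two equivalences about dense definedness and everywhere-definedness of $T^*Q$.

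For the adjoint formula $(QT)^*=T^*Q$, the natural approach is to invoke the general product rule \eqref{prod1}. Since $Q \in \bB(\sK,\sK)$ is bounded and everywhere defined, the inclusion $(QT)^* \supset T^*Q^*=T^*Q$ holds with equality by the remark following \eqref{prod1}; here I use $Q^*=Q$ since $Q$ is an orthogonal projection. Alternatively, one can verify \eqref{PT00} directly from the defining relation \eqref{adjoo}: a pair $\{k,h\}$ belongs to $(QT)^*$ exactly when $(Qg,k)=(f,h)$ for all $\{f,g\} \in T$, and using $(Qg,k)=(g,Qk)$ this says $\{Qk,h\} \in T^*$, i.e. $\{k,h\} \in T^*Q$ by \eqref{prod0++}. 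Either route is short, so I expect no difficulty here.

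Next I would compute $\dom T^*Q$. Using \eqref{prod0++} with the operator $Q$, one has $k \in \dom T^*Q$ precisely when $Qk \in \dom T^*$. Writing $\sK=\ker Q \oplus \ran Q$ and $k=k_0+k_1$ accordingly, the condition $Qk=k_1 \in \dom T^*$ depends only on the $\ran Q$-component, so $k \in \dom T^*Q$ if and only if $k_1 \in \dom T^* \cap \ran Q$, which yields the orthogonal sum \eqref{PT0}. The kernel computation \eqref{PT0k} is entirely parallel: $k \in \ker T^*Q$ means $Qk=k_1 \in \ker T^*$, giving $\ker T^*Q=\ker Q \oplus (\ker T^* \cap \ran Q)$. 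The only mild subtlety is confirming that $\ker Q$ contributes fully to both the domain and the kernel, which follows because $Qk_0=0$ lies in every relation's ``input side'' and maps to $0$.

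Finally, the two ``in particular'' statements follow by taking closures in \eqref{PT0}. Since $\ker Q$ is a fixed closed subspace and the two summands in \eqref{PT0} are orthogonal, $\clos \dom T^*Q=\ker Q \oplus \clos(\dom T^* \cap \ran Q)$; this equals $\sK=\ker Q \oplus \ran Q$ if and only if $\clos(\dom T^* \cap \ran Q)=\ran Q$, giving the density criterion. For everywhere-definedness, $\dom T^*Q=\sK$ forces the (non-closed) summand $\dom T^* \cap \ran Q$ to equal all of $\ran Q$, i.e. $\ran Q \subset \dom T^*$, and conversely this inclusion makes $\dom T^*Q=\ker Q \oplus \ran Q=\sK$. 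I do not anticipate a genuine obstacle anywhere; the main point requiring care is keeping the orthogonal decomposition $\sK=\ker Q \oplus \ran Q$ and the action of $Q$ straight throughout, so that the contributions of the two components are correctly separated.
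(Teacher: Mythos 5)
Your proposal is correct and takes essentially the same route as the paper: the adjoint identity \eqref{PT00} via the product rule \eqref{prod1} (with $Q$ bounded, everywhere defined, and selfadjoint), the domain and kernel formulas via the orthogonal decomposition $\sK=\ker Q \oplus \ran Q$, and the two final criteria read off directly from \eqref{PT0}. The only cosmetic difference is that the paper establishes \eqref{PT0} by proving the two inclusions separately, whereas you decompose $k=k_0+k_1$ and argue in one step; these are the same argument.
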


\begin{proof}
Since $Q$ is an everywhere defined bounded operator, the adjoint of
the relation $QT$ is given by $T^{*}Q$; see \eqref{prod1}.

Now \eqref{PT0} and \eqref{PT0k} will be shown.
Let $f \in \dom T^{*}Q$, then there exists $g \in \sK$ with
$\{f,g\} \in T^{*}Q$ or $\{Qf,g\} \in T^{*}$. This shows via
\[
 f=(I-Q)f+Qf, \quad (I-Q)f \in \ker Q, \quad Qf \in \dom T^{*},
\]
that $f \in \ker Q \oplus (\ran Q \cap \dom T^{*})$. Hence the
left-hand side of \eqref{PT0} is contained in the right-hand side.
 The reverse inclusion follows from the straightforward inclusions
$\ker Q \subset \dom T^{*}Q$ and $\dom T^{*} \cap \ran Q \subset
\dom T^{*}Q$. Hence \eqref{PT0} is clear and the proof of
\eqref{PT0k} is completely similar.

The last two statements are clear from \eqref{PT0}.
\end{proof}

\subsection{Regular and singular relations}

Let $T$ be a relation from a Hilbert space $\sH$ to a Hilbert space $\sK$.
Observe that the trivial inclusion $T \subset T^{**}$ leads to
\begin{equation}\label{mul}
\mul T \subset \cmul T \subset \mul T^{**}.
\end{equation}
It is clear that $T$ is an operator precisely when $\mul T=\{0\}$ or, equivalently,
$\cmul T=\{0\}$;
however in this case the closed linear subspace $\mul T^{**}$ need not be trivial.

\begin{definition}
Let $T$ be a relation from a Hilbert space $\sH$ to a Hilbert space $\sK$.
Then the relation $T$ is called \textit{regular} (or closable) if its closure $T^{**}$
is the graph of an operator.
The relation $T$ is called \textit{singular} if its closure $T^{**}$ is equal to
the product of closed linear subspaces in $\sH$ and $\sK$.
\end{definition}

The next result contains some characterizations and specifications
for regularity; see also \cite[Proposition~3.1]{HSeSnSz},
\cite[Propositions~3.4,~3.5]{HSnSz} and references therein.

\begin{proposition}\label{stoch0}
Let $T$ be a relation from a Hilbert space $\sH$ to a Hilbert
space $\sK$. Then the following statements are equivalent:
\begin{enumerate}[ $ (\rm i)$]
\item $T$ is regular, i.e., $T^{**}$ is an operator;
\item  $\ran T^{**} \subset \cdom T^*$;
\item $\cdom T^*=\sK$.
\end{enumerate}
Moreover, the following statements are equivalent:
\begin{enumerate}[ $(\rm i)$]
\setcounter{enumi}{3}
\item $T$ is strongly regular, i.e., $T^{**}$ is a bounded operator;
\item $\ran T^{**} \subset \dom T^*$;
\item $\dom T^*=\sK$.
\end{enumerate}
Finally, the following statements are equivalent:
\begin{enumerate}[ $ (\rm i)$]
\setcounter{enumi}{6}
\item $T^{**} \in \bB(\sH,\sK)$;
\item $\ran T^{**} \subset \dom T^*$, $\ran T^* \subset \dom T^{**}$;
\item $\dom T^*=\sK$, $\dom T^{**}=\sH$.
\end{enumerate}
\end{proposition}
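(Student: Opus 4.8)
The plan is to obtain each of the three blocks of equivalences by playing the orthogonality identities listed immediately after \eqref{grij0} against the nonorthogonal decompositions \eqref{eqq1} and the simultaneous-closedness principle stated just below them. Two elementary facts will be used repeatedly: a subspace is dense exactly when its orthogonal complement is trivial, and a closed operator is bounded if and only if its domain is closed (the closed graph theorem together with the Cauchy-sequence argument in the other direction).

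For the first block I would begin with (i) $\Leftrightarrow$ (iii). By definition $T$ is regular precisely when $\mul T^{**}=\{0\}$, and the identity $(\dom T^*)^{\perp}=\mul T^{**}$ rewrites this as $(\dom T^*)^{\perp}=\{0\}$, i.e.\ $\cdom T^*=\sK$. The implication (iii) $\Rightarrow$ (ii) is then immediate, since $\ran T^{**}\subset\sK=\cdom T^*$. For (ii) $\Rightarrow$ (iii) I would invoke the second decomposition in \eqref{eqq1}, namely $\sK=\dom T^*+\ran T^{**}$: under (ii) both summands lie in $\cdom T^*$, whence $\sK\subset\cdom T^*$ and (iii) follows.

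The second block follows the same pattern with closures removed. Here (vi) $\Rightarrow$ (v) is trivial, and (v) $\Rightarrow$ (vi) again uses $\sK=\dom T^*+\ran T^{**}$: if $\ran T^{**}\subset\dom T^*$ then both summands lie in the subspace $\dom T^*$, forcing $\dom T^*=\sK$. The substantive point is (iv) $\Leftrightarrow$ (vi). If $\dom T^*=\sK$, then $\dom T^*$ is closed, so by the simultaneous-closedness principle $\dom T^{**}$ is closed; moreover $\cdom T^*=\sK$ shows via the first block that $T^{**}$ is an operator, and a closed operator with closed domain is bounded, giving (iv). Conversely, if $T^{**}$ is a bounded operator, then it is an operator, so $\cdom T^*=\sK$, and its domain is closed; simultaneous closedness makes $\dom T^*$ closed, and a closed dense subspace equals the whole space, so $\dom T^*=\sK$. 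The third block is then read off by noting that (vii) is the conjunction of ``$T^{**}$ is a bounded operator'' (equivalent to $\dom T^*=\sK$ by the second block) and $\dom T^{**}=\sH$; this yields (vii) $\Leftrightarrow$ (ix) directly. The implication (ix) $\Rightarrow$ (viii) is trivial, and (viii) $\Rightarrow$ (ix) applies both halves of \eqref{eqq1}: $\ran T^{**}\subset\dom T^*$ forces $\dom T^*=\sK$ as before, while $\ran T^*\subset\dom T^{**}$ together with $\sH=\dom T^{**}+\ran T^*$ forces $\dom T^{**}=\sH$.

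The one place that needs genuine care, and which I expect to be the main obstacle, is the equivalence (iv) $\Leftrightarrow$ (vi): it is the only step that must combine three separate inputs, the operator characterization from the first block, the simultaneous closedness of $\dom T^{**}$ and $\dom T^*$, and the closed-graph fact linking boundedness to closedness of the domain. Every other implication is either an immediate inclusion or a one-line consequence of the decompositions \eqref{eqq1}.
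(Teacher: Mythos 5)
Your proposal is correct and follows essentially the same route as the paper's proof: the same use of the identities $(\dom T^*)^{\perp}=\mul T^{**}$ and the decompositions \eqref{eqq1}, the same simultaneous-closedness principle, and the same closed-graph argument for (iv) $\Leftrightarrow$ (vi), with the third block reduced to the second exactly as in the paper. The only cosmetic difference is in (ii) $\Rightarrow$ (iii), where the paper notes that $\mul T^{**}\subset \ran T^{**} \subset \cdom T^*=(\mul T^{**})^{\perp}$ forces $\mul T^{**}=\{0\}$, while you instead use $\sK=\dom T^*+\ran T^{**}$ from \eqref{eqq1}; both are valid one-line arguments from the preliminaries.
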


\begin{proof}
It is clear that  (i) $\Leftrightarrow$ (iii) and that (iii) $\Rightarrow$ (ii).
For (ii) $\Rightarrow$ (iii) note that
$\mul T^{**} \subset \cdom T^*=(\mul T^{**})^{\perp}$
implies $\mul T^{**}=\{0\}$.

For (iv) $\Rightarrow$ (vi) note that
$\dom T^{**}$ is closed implies $\dom T^*$ is closed
while $\mul T^{**}=\{0\}$ implies $\cdom T^{*}=\sK$.
For (vi) $\Rightarrow$ (iv)
note that $\dom T^{**}$ is closed and
that $T^{**}$ is an operator.
Then use the closed graph theorem.
It is clear that (vi) $\Rightarrow$ (v),
while (v) $\Rightarrow$ (vi) follows from \eqref{eqq1}.

Note that (vii) $\Leftrightarrow$ (ix) follows
from (iv) $\Leftrightarrow$ (vi).
Moreover (ix) $\Rightarrow$ (viii) is trivial,
while (viii) $\Rightarrow$ (ix) follows from \eqref{eqq1}.
\end{proof}

By definition a relation $T$ from $\sH$ to $\sK$ is singular if and
only if $T^{**}=\sX \times \sY$ with closed linear subspaces $\sX
\subset \sH$ and $\sY \subset \sK$. In particular, any relation from
$\sH$ to $\sK$ which is a product of (not necessarily closed) linear
subspaces is singular. Furthermore, it is clear $T$ and $T^{-1}$ are
simultaneously singular. The next result contains some central
characterizations of singular relations; see also
\cite[Proposition~3.2]{HSeSnSz}, \cite[Proposition~3.3]{HSnSz}.
Again a short proof is given for completeness.

\begin{proposition}\label{Tsinglemma0}
Let $T$ be a relation from a Hilbert space $\sH$ to a Hilbert
space $\sK$. Then the following statements are equivalent:
\begin{enumerate}[ $(\rm i)$]
\item $T$ is singular, i.e., $T^{**} =\dom T^{**} \times \ran T^{**}$;
\item $\dom T^{**}=\ker T^{**}$;
\item $\ran T^{**}=\mul T^{**}$;
\item $T^{*}$ is singular, i.e., $T^{*} =\dom T^{*} \times \ran T^{*}$;
\item $\dom T^{*}=\ker T^{*}$;
\item $\ran T^{*}=\mul T^{*}$.
\end{enumerate}
\end{proposition}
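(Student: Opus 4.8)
The plan is to split the six statements into the two blocks $\{$(i),(ii),(iii)$\}$, which concern $T^{**}$, and $\{$(iv),(v),(vi)$\}$, which concern $T^*$, to prove each block internally, and then to connect them by a single cross-implication. The internal equivalences within the first block are purely algebraic and do not use closedness at all; the only facts needed are the always-valid inclusions $\ker T^{**} \subseteq \dom T^{**}$ and $\mul T^{**} \subseteq \ran T^{**}$.

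First I would prove (i) $\Leftrightarrow$ (ii) $\Leftrightarrow$ (iii). The implications (i) $\Rightarrow$ (ii) and (i) $\Rightarrow$ (iii) are immediate from $T^{**}=\dom T^{**}\times\ran T^{**}$, reading off that $\{f,0\}\in T^{**}$ for every $f\in\dom T^{**}$ and $\{0,g\}\in T^{**}$ for every $g\in\ran T^{**}$. For (ii) $\Rightarrow$ (i), take $\{f,g\}\in T^{**}$; since $f\in\dom T^{**}=\ker T^{**}$ one has $\{f,0\}\in T^{**}$, hence $\{0,g\}=\{f,g\}-\{f,0\}\in T^{**}$ and so $g\in\mul T^{**}$. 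This gives $\ran T^{**}\subseteq\mul T^{**}$, whence $\ran T^{**}=\mul T^{**}$, and then the full product $\dom T^{**}\times\ran T^{**}$ is recovered inside $T^{**}$ by adding back $\{f,0\}$ and $\{0,g\}$. The implication (iii) $\Rightarrow$ (i) is the symmetric argument, interchanging the roles of domain/kernel and range/multivalued part.

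Next I would transfer this block to $T^*$. Since $T^*$ is closed one has $(T^*)^{**}=T^*$, so applying the block just proved to the relation $T^*$ in place of $T$ yields precisely (iv) $\Leftrightarrow$ (v) $\Leftrightarrow$ (vi). It then remains to link the two blocks by one cross-implication, for which (i) $\Leftrightarrow$ (iv) is most convenient. Using $T^*=(T^{**})^*$, if $T^{**}=\sX\times\sY$ with $\sX\subseteq\sH$ and $\sY\subseteq\sK$ closed, then a direct computation from the defining formula \eqref{adjoo} gives $(\sX\times\sY)^*=\sY^\perp\times\sX^\perp$, again a product of closed subspaces; hence $T^*$ is singular, so (i) $\Rightarrow$ (iv), and the converse follows symmetrically from $T^{**}=(T^*)^*$. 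Alternatively, the connection can be made termwise through the orthogonality identities listed after \eqref{grij0}: combining $(\dom T^{**})^\perp=\mul T^*$ with $(\ran T^*)^\perp=\ker T^{**}$ converts (ii) into $\mul T^*=\cran T^*$, which is (vi), and combining $(\dom T^*)^\perp=\mul T^{**}$ with $(\ran T^{**})^\perp=\ker T^*$ gives (iii) $\Leftrightarrow$ (v).

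The only point requiring genuine care is the bookkeeping with closures along this complement route: the subspaces $\dom T^{**}$, $\dom T^*$, $\ran T^{**}$, and $\ran T^*$ need not be closed, so passing from an equality of subspaces to an equality of orthogonal complements and back is not automatic. This is resolved by exploiting that $\ker T^{**}$, $\ker T^*$, $\mul T^{**}$, and $\mul T^*$ are closed, as kernels and multivalued parts of closed relations, together with the inclusions $\ker\subseteq\dom$ and $\mul\subseteq\ran$, which upgrade the one-sided inclusions obtained after taking complements into the required equalities. The adjoint-of-a-product route for the cross-link (i) $\Leftrightarrow$ (iv) sidesteps these closure issues entirely, which is why I would prefer it as the connecting step.
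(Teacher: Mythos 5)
Your proposal is correct and follows essentially the same route as the paper: the equivalences (i)--(iii) are obtained by the same elementary manipulation of pairs in $T^{**}$, and the passage to (iv)--(vi) rests on the same key observation that $T^{**}=\sX\times\sY$ with $\sX,\sY$ closed if and only if $T^{*}=\sY^{\perp}\times\sX^{\perp}$. The only difference is organizational: the paper transfers (i)--(iii) to $T^{*}$ in a single step, whereas you prove the second block internally (using that $T^{*}$ is closed, so $(T^{*})^{**}=T^{*}$) and then add one cross-link, which amounts to the same argument.
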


\begin{proof}
It is clear that (i) $\Rightarrow$ (ii), (iii).  For (iii) $\Rightarrow$ (i)
let $\{f,g\} \in  \dom T^{**} \times \ran T^{**}$.
Then there exists $f' \in \sK$ such that $\{f,f'\} \in T^{**}$ and thus
$g-f' \in \ran T^{**}=\mul T^{**}$, so that
$\{f,g\} =\{f,f'\}+\{0,g-f'\} \in T^{**}$.
Hence $\dom T^{**} \times \ran T^{**} \subset T^{**}$
and the reverse inclusion is obvious. For (ii) $\Rightarrow$ (i)
a similar argument can be used.
Furthermore, it is clear that $T^{**} =\sX \times \sY$, with
closed linear subspaces $\sX \subset \sH$ and $\sY \subset \sK$,
if and only if $T^{*}=\sY^{\perp} \times \sX^{\perp}$.
Hence   (iv), (v), and (vi) follow by applying (i), (ii), and (iii)
with $T^{**}$ replaced by $T^{*}$.
\end{proof}

In particular, $T$ and $T^*$ are simultaneously singular. Notice
also that for a singular $T$ the sets $\dom T^*$, $\ran T^*$, $\dom
T^{**}$, and $\ran T^{**}$ are necessarily closed subspaces. A
relation $T$ is simultaneously regular and singular precisely when
$T^{**}=\dom T^{**} \times \{0\}$, i.e., when $T^{**}$ is the zero
operator on its domain. This statement is equivalent to $T^{*}=\sK
\times \ran T^{*}$.  For further results on regular and singular
relations and their connections to certain decomposability
properties of $T$, see \cite{HSnSz}.

\subsection{Existence of singular operators or relations} The existence of a singular
operator or relation will be illustrated by means of a simple example
going back to  J.~Brasche; cf. \cite[pp. 314--315]{Wer}.
Related examples can be found in the literature,
see for instance \cite[pp. 447--448]{Stone}, \cite[pp. 72--73]{Weid},
\cite[p. 351]{Wer}.

\begin{example}[Point evaluations]\label{L-decom0+}
 Let $\sH=L^{2}[0, \infty)$, let $\sK$ be a Hilbert space with
an orthonormal basis $(e_{n})$,
and let $(x_{n})$  be a strictly
increasing unbounded sequence in $[0,\infty)$.
Let $\cD$ stand for the continuous functions with compact support
on $[0,\infty)$ and define $T$ by
\begin{equation}\label{infinite}
 T=\left\{\,\left\{f, {\sum}^{\infty}_{n=1} f(x_{n})e_{n} \right\} :\, f \in \cD\,\right\}.
\end{equation}
The sum ${\sum}^{\infty}_{n=1} f(x_{n})e_{n}$ is actually a finite
sum since the function $f$ has compact support. Hence $T$ is a
well-defined operator from $\sH$ to $\sK$ with dense domain in
$\sH$. Note that $\left\{h,k\right\} \in T^{*}$ if and only if
\begin{equation}\label{adjEx}
\left(\sum^{\infty}_{n=1}f(x_{n})e_{n},h\right)=\left(f,k\right)
\end{equation}
for all $f \in \cD$.
Let $n_0 \in \dN$ be arbitrary
and choose a nontrivial interval $I_{n_0} \subset [0,\infty)$
with $x_{n_0} \in I_{n_0}$
which does not contain the other points $x_j$, $j \neq n_0$.
Let $(f_m)$ in $\cD$ be a sequence with
\[
 \supp(f_{m}) \subset I_{n_0},
 \quad f_{m}(x_{n_0})=1,
 \quad
 \left\| f_{m} \right\|_{L^{2}[0,\infty)} \rightarrow 0  \mbox{ as } m \to \infty.
\]
Now using \eqref{adjEx} shows that
\[
 |\left(e_{n_0},h\right)| = |\left(f_m,k\right)| \leq
 \|f_m\|_{L^{2}[0, \infty)}\|\,\|k\|_{L^{2}[0, \infty)}
\]
and taking limits as $m\to \infty$ leads to $(e_{n_0},h)=0$. Since
$n_{0}$ is arbitrary this leads to $h=0$ by Parseval's identity. The
identity $(f,k)=0$ for all $f \in \cD$ then gives $k=0$. It follows
that
\[
T^{*}=\left\{0\right\} \times \left\{0\right\}, \quad
T^{**}=\sH \times \sK.
\]
Thus $T$ is a densely defined singular operator.

For a finite number of point evaluations let $(e_{n})$, $1 \le n \le
N$, be a finite orthonormal sequence in the Hilbert space $\sK$ and
let $x_{1}, \dots, x_N$ be points in $[0,\infty)$ with $0 \le x_1 <
x_2< \dots < x_N$. Now define $T$ from $\sH=L^{2}[0,\infty)$ to
$\sK$ by
\begin{equation}\label{finite}
 T=\left\{\,\left\{f, {\sum}^{N}_{n=1} f(x_n)e_{n} \right\} :\, f \in
 \cD\,\right\}.
\end{equation}
Then in a similar way it is seen that
\[
 T^*=\spn\{e_1,\ldots,e_N\}^\perp \times \{0\}, \quad
T^{**}=\sH \times\spn\{e_1,\ldots,e_N\}.
\]
Hence, again, $T$ is densely defined singular operator.

In particular, if $c \in [0,\infty)$ and
$T$ maps $f \in \cD\subset \sH=L^{2}[0,\infty)$ to $\sK=\dC$
by $f \to f(c)$, then
\[
 T^*=\{0\} \times \{0\}, \quad
T^{**}=\sH \times \dC,
\]
and the operator $T$ (linear functional) is singular.
\end{example}

\section{Orthogonal range decompositions}\label{sec3}

The main objects in this paper involve orthogonal range decompositions
of operators and relations. Here the definition and some properties
are given which will be repeatedly used in the rest of the paper.

\begin{definition}\label{osde}
Let $T$, $T_{1}$, and $T_{2}$ be relations from the Hilbert space $\sH$
to the Hilbert space $\sK$.
The sum $T=T_{1}+T_{2}$ is said to be
an \textit{orthogonal range decomposition} of $T$ if
\begin{equation}\label{qp0}
 \dom T=\dom T_1=\dom T_2 \quad \mbox{and}
 \quad \ran T_{1} \perp \ran T_{2}.
\end{equation}
\end{definition}

In the next lemma the orthogonal range decompositions of a relation
$T$ from $\sH$ to $\sK$ are characterized by orthogonal projectors
in $\sK$.

\begin{lemma}\label{osd}
Let $T$ be a  relation from the Hilbert space $\sH$ to the Hilbert
space $\sK$. Then the following statements hold:
\begin{enumerate}[ $(\rm i)$]
\item If $T$ has an orthogonal range decomposition $T=T_{1}+T_{2}$,
then there exists an orthogonal projection $Q$ in $\sK$ such that
\begin{equation}\label{qp12+}
Q\, \mul T \subset \mul T,
\end{equation}
and, in addition,
\begin{equation}\label{qp12}
 T_1=(I-Q)T \text{ and } T_2=QT.
\end{equation}

\item If $Q$ is an orthogonal projection in $\sK$ such that
\eqref{qp12+} is satisfied,  then $T$ has the orthogonal range
decomposition $T=T_{1}+T_{2}$ such that  \eqref{qp12} holds.
\end{enumerate}
\end{lemma}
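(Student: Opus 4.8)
The plan is to prove the two directions of Lemma~\ref{osd} by using Lemma~\ref{orth+}, which already provides the decomposition machinery via orthogonal projections. The content of the present lemma is essentially to match the abstract decomposition $T=(I-Q)T+QT$ with the \emph{orthogonal range decomposition} structure demanded in Definition~\ref{osde}, namely that the two summands share the common domain $\dom T$ and that their ranges are orthogonal. So the whole proof reduces to verifying those two defining conditions in \eqref{qp0} once the correct projection $Q$ is identified.

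For part (ii), which I would do first since it is the constructive direction, I would start from an orthogonal projection $Q$ in $\sK$ satisfying $Q\,\mul T\subset\mul T$. By Lemma~\ref{orth+} the hypothesis $Q\,\mul T\subset\mul T$ is exactly equivalent to the genuine sum decomposition $T=(I-Q)T+QT$. Setting $T_1=(I-Q)T$ and $T_2=QT$, I need to check the two conditions in \eqref{qp0}. The domain condition is immediate: both $(I-Q)T$ and $QT$ are obtained by postcomposing $T$ with a bounded everywhere-defined operator, so each has the same domain as $T$, hence $\dom T_1=\dom T_2=\dom T$. For the range orthogonality, $\ran T_1\subset\ran((I-Q))=\ker Q$ and $\ran T_2\subset\ran Q$, and since $Q$ is an \emph{orthogonal} projection we have $\ker Q=\ran(I-Q)\perp\ran Q$; therefore $\ran T_1\perp\ran T_2$. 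This establishes \eqref{qp12} together with \eqref{qp0}, completing part (ii).

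For part (i), the converse direction, I am given an orthogonal range decomposition $T=T_1+T_2$ and must produce the projection $Q$ with the stated properties. The natural candidate is $Q=P_{\cran T_2}$, the orthogonal projection onto the closure of $\ran T_2$ (equivalently onto $\cran T_2$); since $\ran T_1\perp\ran T_2$ we have $\ran T_1\subset\ker Q$ and $\ran T_2\subset\ran Q$. The key is then to verify that for each $\{f,g\}\in T$, writing $g=h+k$ with $\{f,h\}\in T_1$, $\{f,k\}\in T_2$, the orthogonality $h\perp\ran T_2$ forces $Qg=k$ and $(I-Q)g=h$; this uses that $h\in\ran T_1\subset\ker Q$ and $k\in\ran T_2\subset\ran Q$, so $Qg=Qk=k$ and $(I-Q)g=h$. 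This identifies $\{f,(I-Q)g\}=\{f,h\}\in T_1$ and $\{f,Qg\}=\{f,k\}\in T_2$, giving the inclusions $(I-Q)T\subset T_1$ and $QT\subset T_2$; the reverse inclusions follow from the common-domain condition together with the definitions of $(I-Q)T$ and $QT$. Once \eqref{qp12} is in hand, the invariance $Q\,\mul T\subset\mul T$ follows from \eqref{incl0} of Lemma~\ref{orth+} applied to the now-established equality $T=(I-Q)T+QT$, or directly from Lemma~\ref{orth+}'s ($\Rightarrow$) argument.

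The main obstacle, and the only genuinely nontrivial point, is the argument in part (i) that the decomposition $g=h+k$ supplied by the sum $T_1+T_2$ is \emph{forced} to coincide with the orthogonal splitting $g=(I-Q)g+Qg$. This needs the range orthogonality to be used carefully: a priori $h$ and $k$ need not be unique as elements realizing $g$, but the orthogonality $\ran T_1\perp\ran T_2$ pins down their projections onto $\cran T_2$ and its complement, which is what makes $Q$ work simultaneously for every pair in the decomposition. I expect everything else to be routine bookkeeping with the definitions of $QT$, $(I-Q)T$ in \eqref{prod0+} and of the sum in \eqref{Tdecomp00}, together with the already-proved Lemma~\ref{orth+}.
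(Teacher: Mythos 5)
Your proposal is correct and follows essentially the same route as the paper: part (ii) is read off from Lemma~\ref{orth+} exactly as in the paper, and for part (i) you choose the same projection $Q=P_{\cran T_2}$, use $\ran T_1\subset\ker Q$ and $\ran T_2\subset\ran Q$ to identify $(I-Q)T=T_1$ and $QT=T_2$ (the paper phrases this as distributing $Q$ over the sum on the common domain, which is the same elementwise computation), and then obtain \eqref{qp12+} from Lemma~\ref{orth+}. The only cosmetic difference is that you argue the two inclusions and their reverses separately, while the paper gets the equalities in one stroke from $Q(T_1+T_2)=QT_1+QT_2$ together with $\dom T_1=\dom T_2$.
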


\begin{proof}
(i) Let $Q$ be the orthogonal projection from $\sK$ onto $\cran
T_2$. Then clearly
\[
 QT_{2}=\{\,\{f,Qf'\}:\, \{f,f'\} \in T_{2}\,\} =T_{2}.
\]
Since $\ran T_1 \subset \ker Q$ the product $QT_1$ is the zero
operator on $\dom T_1$. It follows from the definition of the sum
that
\[
\begin{split}
 Q(T_1+T_2)&=\{\,\{f,Q(h+k)\} :\, \{f,h\} \in T_1, \,\, \{f,k\} \in T_2\,\}\\
 &=\{\,\{f,Qh+Qk)\} :\, \{f,h\} \in T_1, \,\, \{f,k\} \in T_2\,\}=QT_{1}+QT_{2}.
\end{split}
\]
Since $\dom T=\dom T_{1}=\dom T_{2}$ one sees that
$QT_{1}+QT_{2}=T_{2}$. Hence one obtains $QT=T_{2}$.
Now likewise one has $(I-Q)(T_1+T_2) =(I-Q)T_{1}+(I-Q)T_{2}$.
Note that $(I-Q)T_{2}=0$ while $(I-Q)T_{1}=T_{1}$.
Hence one obtains $(I-Q)T=T_{1}$. Thus \eqref{qp12} has been shown.
The identity \eqref{qp12+} follows from Lemma \ref{orth+}.

(ii) This statement is clear from Lemma \ref{orth+}.
 \end{proof}

The terminology of orthogonal range decomposition of $T=T_1+T_2$ in
Definition \ref{osde} refers to the fact  that $ \ran T_{1} \perp
\ran T_{2}$. Thus, if $T$ has the above orthogonal range
decomposition then for every $\{f,f'\} \in T$ one has
\[
 \{f,f'\}=\{f, f_{1}'+f_{2}'\}, \quad \mbox{where} \quad
 \{f,f_{1}'\}\in T_{1}, \,\,  \{f,f_{2}'\}\in T_{2}, \,\, f_{1}' \perp f_{2}'.
\]
Recall from \eqref{rm1} that in this case
\[
 \ran T \subset \ran T_1 \oplus \ran T_2.
\]
However, the following corollary shows that it is not necessarily a consequence
of an orthogonal range decomposition $T=T_1+T_2$
that equality holds in the above  inclusion.

\begin{corollary}\thlab{randecom}
Let $T$ be a  relation from the Hilbert space $\sH$ to the Hilbert
space $\sK$ and let $T=T_1+T_2$ be an orthogonal range decomposition of $T$.
Then the following conditions are equivalent:
\begin{enumerate}[ $(\rm i)$]
\item $\ran T_1 \subset \ran T$;

\item $\ran T_2 \subset \ran T$;

\item $\ran T=\ran T_1 \oplus \ran T_2$.
\end{enumerate}
Moreover, the following statements are equivalent:
\begin{enumerate}[ $(\rm i)$]
\setcounter{enumi}{4}

\item $\ran T_1 \subset \cran T$;

\item $\ran T_2 \subset \cran T$;

\item $\cran T = \cran T_1\oplus \cran T_2$.
\end{enumerate}
\end{corollary}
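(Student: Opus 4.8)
The plan is to exploit two facts that hold for \emph{every} orthogonal range decomposition, and then reduce both triples to short ``difference'' arguments. First, by \eqref{rm1} together with $\ran T_1 \perp \ran T_2$ the inclusion $\ran T \subset \ran T_1 \oplus \ran T_2$ always holds; taking closures and using that the sum of two orthogonal subspaces has closure $\cran T_1 \oplus \cran T_2$ (which is itself closed), one also gets the always-valid inclusion $\cran T \subset \cran T_1 \oplus \cran T_2$. Second, each element of $T$ splits as $\{f, f_1'+f_2'\}$ with $\{f,f_i'\} \in T_i$ and $f_1' \perp f_2'$, while $\ran T$ is a linear subspace and $\cran T$ a closed subspace, so both are stable under subtraction. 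These two observations are all that is needed.

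For the first triple I would run the cycle (i) $\Rightarrow$ (ii) $\Rightarrow$ (iii) $\Rightarrow$ (i). For (i) $\Rightarrow$ (ii): given $g \in \ran T_2$, choose $\{f,g\} \in T_2$; since $\dom T_1=\dom T_2$ there is some $\{f,h\} \in T_1$, whence $\{f,h+g\} \in T$ and $h+g \in \ran T$; by (i) one has $h \in \ran T_1 \subset \ran T$, so $g=(h+g)-h \in \ran T$, giving $\ran T_2 \subset \ran T$. The implication (ii) $\Rightarrow$ (i) is identical after interchanging $T_1$ and $T_2$, so from (ii) one recovers (i) as well; with both ranges inside the subspace $\ran T$ one gets $\ran T_1 \oplus \ran T_2 \subset \ran T$, and the always-valid reverse inclusion yields (iii). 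Finally (iii) $\Rightarrow$ (i) is immediate, since $\ran T_1 \subset \ran T_1 \oplus \ran T_2=\ran T$.

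For the second triple I would repeat the same three steps verbatim with $\ran T$ replaced by $\cran T$, inserting $\ran T_i \subset \cran T_i$ at the single place where membership of $h$ was used. Thus for (v) $\Rightarrow$ (vi), with $g \in \ran T_2$ and $\{f,h\} \in T_1$ as above, one has $h+g \in \ran T \subset \cran T$ and $h \in \ran T_1 \subset \cran T_1 \subset \cran T$, so $g \in \cran T$, and hence $\cran T_2 \subset \cran T$; the symmetric argument gives (vi) $\Rightarrow$ (v). The passage to (vii) uses that $\cran T$ is closed, so once both closed ranges lie in it one has $\cran T_1 \oplus \cran T_2 \subset \cran T$, which combined with the always-valid $\cran T \subset \cran T_1 \oplus \cran T_2$ gives equality; and (vii) $\Rightarrow$ (v) is again trivial.

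I do not expect a serious obstacle. The only points requiring care are the two ``always-valid'' inclusions (in particular, that orthogonality makes $\ran T_1+\ran T_2$ and $\cran T_1+\cran T_2$ genuine orthogonal direct sums, the latter being closed) and the bookkeeping that $\ran T$ is a subspace and $\cran T$ a closed subspace, so that the difference of two of their elements stays inside. These guarantee the subtraction steps are legitimate, and in particular no separate appeal to the projection $Q$ of Lemma~\ref{osd} is needed.
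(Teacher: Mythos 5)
Your proposal is correct and is essentially the paper's own argument: both rest on the automatic inclusion $\ran T \subset \ran T_1 \oplus \ran T_2$ (and its closed analogue), and on writing an element of $\ran T_2$ as the difference of an element of $\ran T$ and an element of $\ran T_1$, with the same bookkeeping repeated for the closures in the second triple. The only cosmetic difference is that you produce this difference directly from the definition of the sum of relations and the common domain, whereas the paper invokes the representation $T_1=(I-Q)T$, $T_2=QT$ from Lemma~\ref{osd}; this changes nothing of substance.
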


\begin{proof}
Assume that $Q$ is an orthogonal projection as in Lemma \ref{osd}.

(i) $\Leftrightarrow$ (ii) If $k \in \ran T_2$, then $k=Pf'$ for
some $\{f,f'\} \in T$ and
\[
f'=(I-P)f'+Pf' \in \ran T;
\]
see \eqref{qp12}. Hence, if (i) holds then $(I-P)f'\in \ran T$ and thus
also $k=Pf'\in\ran T$, i.e., (ii) follows. The reverse implication
is proved in the same way.

(i), (ii) $\Leftrightarrow$ (iii) Since automatically $\ran T
\subset \ran T_1 \oplus \ran T_2$, the inclusions in (i) and (ii)
imply the reverse inclusion and, thus, (iii) follows. The converse
statement is clear.

The equivalence of (iv), (v), and (vi) is seen with straightforward
modifications of the above arguments.
\end{proof}

To describe the components in an orthogonal range decomposition
of a relation the following lemma concerning projected relations will
play an important role.

\begin{lemma}\label{Lebtypelem}
Let $T$  be a linear relation from a Hilbert space
$\sH$ to a Hilbert space $\sK$ and let $Q$ be an orthogonal
projection in $\sK$.
 Then the following statements hold:
\begin{enumerate}[$(\rm i)$]
\item The relation $(I-Q)T$ is an operator if and only if
\begin{equation}\label{Mleb0}
   \mul T \subset \ran Q.
\end{equation}
\item The relation $(I-Q)T$ is regular if and only if
\begin{equation}\label{Mleb1}
   \clos(\ker Q \cap \dom T^* )=\ker Q,
\end{equation}
in which case
\begin{equation}\label{Mleb11}
\mul T^{**} \subset \ran Q.
\end{equation}
\item The relation $QT$ is singular if and only if
\begin{equation}\label{Mleb2}
    \ran Q \cap \dom T^* \subset \ker T^*,
\end{equation}
in which case
\begin{equation}\label{Mleb22}
( QT )^{**}=\cdom T^{**}\times (\ran Q \cap (\ran Q \cap \dom T^{*})^{\perp}).
\end{equation}
 \end{enumerate}
\end{lemma}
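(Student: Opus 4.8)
The plan is to handle the three parts separately, in each case reducing the operator/regular/singular property of the projected relation to the stated condition by computing the relevant adjoint via Lemma~\ref{PT} and then invoking the characterizations in Propositions~\ref{stoch0} and \ref{Tsinglemma0}. For (i) the argument is entirely elementary: a pair $\{0,f'\}$ lies in $(I-Q)T$ exactly when $f'=(I-Q)g$ for some $\{0,g\}\in T$, so that $\mul((I-Q)T)=(I-Q)\,\mul T$. Since a relation is an operator precisely when its multivalued part is trivial, $(I-Q)T$ is an operator iff $(I-Q)\,\mul T=\{0\}$, i.e. iff $\mul T\subset\ker(I-Q)=\ran Q$, which is \eqref{Mleb0}.

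For (ii), by Proposition~\ref{stoch0} the relation $(I-Q)T$ is regular iff $\cdom((I-Q)T)^*=\sK$. Applying Lemma~\ref{PT} with $Q$ replaced by the orthogonal projection $I-Q$ (so that $\ran(I-Q)=\ker Q$ and $\ker(I-Q)=\ran Q$) gives $((I-Q)T)^*=T^*(I-Q)$ and
\[
\dom T^*(I-Q)=\ran Q\oplus(\dom T^*\cap\ker Q).
\]
Taking closures and using that $\ran Q$ is closed and orthogonal to the second summand, the closure distributes over this orthogonal sum, so $\cdom((I-Q)T)^*=\ran Q\oplus\clos(\ker Q\cap\dom T^*)$; this fills $\sK=\ran Q\oplus\ker Q$ exactly when $\clos(\ker Q\cap\dom T^*)=\ker Q$, which is \eqref{Mleb1}. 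For the supplementary inclusion \eqref{Mleb11}, I would note that regularity forces $\ker Q=\clos(\ker Q\cap\dom T^*)\subset\cdom T^*$, and complementing, using $(\dom T^*)^\perp=\mul T^{**}$, yields $\mul T^{**}\subset(\ker Q)^\perp=\ran Q$.

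For (iii), I would use Proposition~\ref{Tsinglemma0} in the form ``$S$ is singular iff $\dom S^*=\ker S^*$'' with $S=QT$ and $S^*=T^*Q$. The two displays of Lemma~\ref{PT} give $\dom T^*Q=\ker Q\oplus(\dom T^*\cap\ran Q)$ and $\ker T^*Q=\ker Q\oplus(\ker T^*\cap\ran Q)$, so the singularity condition collapses to $\dom T^*\cap\ran Q=\ker T^*\cap\ran Q$; since $\ker T^*\subset\dom T^*$ always holds, this is equivalent to \eqref{Mleb2}. For the closed form \eqref{Mleb22}, singularity gives $(QT)^{**}=\dom(QT)^{**}\times\ran(QT)^{**}$. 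I would then compute each factor by orthogonal complementation: from $(\dom(QT)^{**})^\perp=\mul(QT)^*=\mul T^*Q=\mul T^*$ (the last equality from \eqref{prod0++} with the operator $Q$) one gets $\dom(QT)^{**}=\cdom T^{**}$, and from $(\ran(QT)^{**})^\perp=\ker(QT)^*=\ker T^*Q$ together with the kernel display one gets $\ran(QT)^{**}=\ran Q\cap(\ker T^*\cap\ran Q)^\perp$. Finally I would invoke the equality $\ker T^*\cap\ran Q=\dom T^*\cap\ran Q$ already established above to rewrite this as $\ran Q\cap(\ran Q\cap\dom T^*)^\perp$, giving \eqref{Mleb22}.

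All the reductions to the adjoint are immediate once Lemma~\ref{PT} is in hand; the two steps requiring genuine care are the closure/complementation manipulations. In (ii) one must justify that the closure distributes across the orthogonal sum $\ran Q\oplus(\dom T^*\cap\ker Q)$, which rests on $\ran Q$ being closed and orthogonal to the second summand so that their sum stays closed (positive angle). In (iii) the delicate point is the passage from $\ker T^*$ to $\dom T^*$ inside the range formula, which is exactly where singularity enters, together with the identification $\mul T^*Q=\mul T^*$ used for the first factor. I expect the complementation of $\ker Q\oplus(\ker T^*\cap\ran Q)$, using $(\ker Q)^\perp=\ran Q$, to be the most error-prone bookkeeping, though it is not conceptually hard.
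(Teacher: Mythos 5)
Your proposal is correct and follows essentially the same route as the paper: part (i) is identical, part (ii) reduces regularity to dense definedness of $T^*(I-Q)$ via Lemma~\ref{PT} and Proposition~\ref{stoch0}, and part (iii) reduces singularity to $\dom T^*\cap\ran Q\subset\ker T^*$ via Lemma~\ref{PT} and Proposition~\ref{Tsinglemma0}. The only (harmless) variation is in deriving \eqref{Mleb22}: you compute $\dom(QT)^{**}$ and $\ran(QT)^{**}$ by orthocomplementing $\mul T^*Q$ and $\ker T^*Q$ and then convert $\ker T^*\cap\ran Q$ into $\dom T^*\cap\ran Q$ using singularity, whereas the paper takes adjoints in the product representation $T^*Q=\dom T^*Q\times\mul T^*Q$, which yields the $\dom T^*$ form directly.
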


\begin{proof}
(i) Observe that $\mul (I-Q)T=(I-Q) \mul T=\{0\}$ (cf.
\eqref{prod0+}) if and only if $\mul T \subset \ker (I-Q)=\ran Q$.

(ii) Recall that   $(I-Q)T$ is regular
if and only if its adjoint $T^*(I-Q)$ is densely defined;
cf. Proposition \ref{stoch0}.
By Lemma \ref{PT} this is the case precisely when
\[
 \clos (\dom T^* \cap \ran (I-Q)) =\ran (I-Q),
\]
which is \eqref{Mleb1}. It follows from \eqref{Mleb1} that
\[
 \ker Q =\clos( \ker Q \cap \dom T^*) \subset \ker Q \cap \cdom T^* ,
\]
or $\ker Q \subset \cdom T^{*}$.
Taking orthogonal complements gives \eqref{Mleb11}.

(iii) Recall that $QT$ is singular if and only if
$\dom T^*Q \subset \ker T^*Q$;
cf. Proposition \ref{Tsinglemma0}.
Observe that Lemma \ref{PT} gives
\begin{equation}\label{PTsing0}
 \dom T^*Q
 =\ker Q \oplus (\dom T^* \cap \ran Q),
 \quad
  \ker T^*Q=\ker Q \oplus (\ker T^* \cap \ran Q).
\end{equation}
Hence $\dom T^*Q \subset \ker T^*Q$
if and only if $\dom T^* \cap \ran Q \subset \ker T^* \cap \ran Q$,
which is precisely \eqref{Mleb2}.
Now assume that $QT$ is singular, then also $(QT)^{*}=T^{*}Q$ is singular
and therefore
\begin{equation}\label{humhum}
 T^{*}Q=\dom T^{*}Q \times \mul T^{*}Q.
\end{equation}
Use Lemma \ref{PT} and \eqref{prod0++} to obtain
\[
\dom (T^{*}Q)=\ker Q  \oplus (\ran Q \cap \dom T^{*}), \quad
\mul T^{*}Q=  \mul T^{*}.
\]
Taking adjoints in \eqref{humhum} and applying
Proposition~\ref{Tsinglemma0} (i) leads to \eqref{Mleb22}.
 \end{proof}

The main emphasis of this paper will be on the following subclass
of the orthogonal range decompositions in Definition \ref{osde}.

\begin{definition}\label{osde+}
Let $T$, $T_{1}$, and $T_{2}$ be relations from the Hilbert space $\sH$
to the Hilbert space $\sK$.
The sum $T=T_{1}+T_{2}$ is said to be
a \textit{distinguished orthogonal range decomposition} of $T$ if
it is an orthogonal range decomposition
and if, in addition, $T_{1}$ is an operator.
\end{definition}

Note that if $T$ in Definition \ref{osde+} is an operator and
$T=T_{1}+T_{2}$ is a distinguished  orthogonal range decomposition
of $T$, then automatically $T_2$ is an operator. Furthermore, one
should be aware that also in a distinguished orthogonal range
decomposition  of $T$ it is not necessarily true that $T_1$ is
contained in $T$; cf. Corollary \ref{randecom}. The following
description of a distinguished orthogonal range decomposition is a
direct consequence of Lemma \ref{osd} and (i) in
Lemma~\ref{Lebtypelem}.

\begin{corollary}\label{osdd}
Let $T$ be a  relation from the Hilbert space $\sH$ to the Hilbert
space $\sK$. Then the following statements hold:
\begin{enumerate}[ $(\rm i)$]
\item If $T$ has a distinguished orthogonal range decomposition $T=T_{1}+T_{2}$,
then there exists an orthogonal projection $Q$ in $\sK$ such that
\begin{equation}\label{qp12++}
 \cmul T \subset \ran Q,
\end{equation}
and, in addition,
\begin{equation}\label{qp12+++}
 T_1=(I-Q)T \text{ and } T_2=QT.
\end{equation}

\item If $Q$ is an orthogonal projection in $\sK$ such that
\eqref{qp12++} is satisfied, then $T$ has the orthogonal range decomposition
$T=T_{1}+T_{2}$ such that  \eqref{qp12+++} holds.
\end{enumerate}
\end{corollary}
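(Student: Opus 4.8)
The plan is to read the statement directly off Lemma~\ref{osd} and part~(i) of Lemma~\ref{Lebtypelem}, the only genuine point being that the single condition \eqref{qp12++}, namely $\cmul T \subset \ran Q$, simultaneously encodes the invariance requirement \eqref{qp12+} of Lemma~\ref{osd} and the operator requirement \eqref{Mleb0} of Lemma~\ref{Lebtypelem}. First I would record the elementary observation that, since $\ran Q$ is closed, the inclusion $\mul T \subset \ran Q$ of \eqref{Mleb0} holds if and only if $\cmul T \subset \ran Q$; and that whenever $\mul T \subset \ran Q$ one automatically has $Qm=m$ for each $m \in \mul T$, so that $Q\,\mul T \subset \mul T$ holds for free. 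Thus, over the class of projections with $\mul T \subset \ran Q$, the combined hypotheses of the two lemmas collapse to the single condition \eqref{qp12++}.

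For part~(i), suppose $T=T_1+T_2$ is a distinguished orthogonal range decomposition. It is in particular an orthogonal range decomposition, so Lemma~\ref{osd}(i) produces an orthogonal projection $Q$ in $\sK$ satisfying \eqref{qp12+} together with $T_1=(I-Q)T$ and $T_2=QT$, which is \eqref{qp12+++}. Because the decomposition is distinguished, $T_1=(I-Q)T$ is an operator, so part~(i) of Lemma~\ref{Lebtypelem} yields $\mul T \subset \ran Q$; closedness of $\ran Q$ then upgrades this to $\cmul T \subset \ran Q$, which is exactly \eqref{qp12++}.

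For part~(ii), I would start from a projection $Q$ satisfying \eqref{qp12++}. Then $\mul T \subset \cmul T \subset \ran Q$, and by the preliminary observation $Q$ fixes $\mul T$, so \eqref{qp12+} holds and Lemma~\ref{osd}(ii) returns the orthogonal range decomposition $T=T_1+T_2$ with $T_1=(I-Q)T$ and $T_2=QT$, that is \eqref{qp12+++}. Finally $\mul T \subset \ran Q$ together with part~(i) of Lemma~\ref{Lebtypelem} forces $T_1=(I-Q)T$ to be an operator, so this orthogonal range decomposition is in fact distinguished.

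No step is a serious obstacle; the one thing to get right is the passage between $\mul T$ and its closure $\cmul T$, which is precisely where the closedness of $\ran Q$ enters, together with the (easy but worth stating) remark that the invariance \eqref{qp12+} imposes no extra constraint once $\mul T \subset \ran Q$ is assumed.
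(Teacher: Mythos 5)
Your proof is correct and follows essentially the same route as the paper: both combine Lemma~\ref{osd} with part~(i) of Lemma~\ref{Lebtypelem}, the hinge in each case being that ${\rm mul\,} T \subset {\rm ran\,} Q$ automatically gives $Q\,{\rm mul\,} T \subset {\rm mul\,} T$ and, by closedness of ${\rm ran\,} Q$, is equivalent to $\overline{{\rm mu}}{\rm l}\, T \subset {\rm ran\,} Q$. Your write-up merely spells out the two directions more explicitly than the paper's terse version.
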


\begin{proof}
By Lemma~\ref{Lebtypelem}~(i) $(I-Q)T$ is an operator if and only if
$\mul T \subset \ran Q$. This last condition implies in particular
that $Q\, \mul T = \mul T$ and $\cmul T \subset \ran Q$. In this
case $T=(I-Q)T+QT$ is automatically an orthogonal range
decomposition. The remaining statements in (i) and (ii) are obtained
from Lemma~\ref{osd}.
 \end{proof}

The rest of this paper is devoted to classes of distinguished
orthogonal range decompositions of the form $T=T_{1}+T_{2}$ with the
additional requirement that the component $T_2$ is singular. First
the case where $T_{1}$ is a closable operator (i.e., $T_{1}$ is
regular) and $T_{2}$ is singular is studied. The weaker case where
$T_{1}$ is just an operator and $T_{2}$ is singular and its
connections with the earlier case is briefly treated after that. In
this analysis the descriptions in items (ii) and (iii) of Lemma
\ref{Lebtypelem} will be often used.

\section{Lebesgue decompositions for linear relations}\label{sec4}

Let $T$ be a relation from a Hilbert space $\sH$ to a Hilbert space
$\sK$. The following orthogonal  decomposition of the space
$\sK$,
\[
 \sK=\cdom T^* \oplus \mul T^{**},
\]
induces a corresponding  distinguished orthogonal range
decomposition of the relation $T$ itself; cf. \cite{HSeSnSz,HSnSz}.
This section gives a short self-contained treatment of this induced
decomposition of $T$ with some central properties that will be
relevant for the analysis in later sections.

Define the  \textit{regular part} $T_{\rm  reg}$ and the
\textit{singular part} $T_{\rm  sing}$ of $T$ by
\begin{equation}\label{resi}
 T_{\rm reg}=(I-P)T, \quad  T_{\rm sing}=PT,
\end{equation}
where $P$ is the orthogonal projection from $\sK$ onto $\mul
T^{**}$.
The regular and singular parts have the following
properties.

\begin{theorem}\label{jor0}
Let $T$  be a relation from a Hilbert space
$\sH$ to a Hilbert space $\sK$.
Then the relations $T_{\rm  reg}$ and $T_{\rm   sing}$ in \eqref{resi}
have the following properties:
\begin{enumerate}[ $ (\rm i)$]
\item $( T_{\rm  reg} )^{**}$ is an operator,
i.e., $T_{\rm reg}$ is regular;
\item $( T_{\rm  sing} )^{**}=\cdom T^{**}\times \mul T^{**}$,
i.e., $T_{\rm sing}$
is singular,
\end{enumerate}
and $T$ has the distinguished orthogonal range decomposition:
\begin{equation}\label{HSSSdec}
 T= T_{\rm reg} +  T_{\rm sing}.
\end{equation}
\end{theorem}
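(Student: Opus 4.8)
The plan is to read everything off of Lemma~\ref{Lebtypelem} and Corollary~\ref{osdd} by specializing the orthogonal projection $Q$ to $P$, the projection onto $\mul T^{**}$. The first step is to record what $P$ does: since $(\dom T^{*})^{\perp}=\mul T^{**}$ (one of the identities listed after \eqref{grij0}), one has $\ran P=\mul T^{**}$ and $\ker P=(\mul T^{**})^{\perp}=\cdom T^{*}$. With these two identifications in hand, each of the three assertions becomes a one-line verification of a hypothesis appearing in Lemma~\ref{Lebtypelem}.

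For part (i) I would apply Lemma~\ref{Lebtypelem}(ii) with $Q=P$: the relation $T_{\rm reg}=(I-P)T$ is regular precisely when $\clos(\ker P\cap \dom T^{*})=\ker P$. Here $\ker P\cap \dom T^{*}=\cdom T^{*}\cap \dom T^{*}=\dom T^{*}$ because $\dom T^{*}\subset \cdom T^{*}$, and then $\clos(\dom T^{*})=\cdom T^{*}=\ker P$; so the condition holds automatically and $T_{\rm reg}$ is regular. For part (ii) I would apply Lemma~\ref{Lebtypelem}(iii): $T_{\rm sing}=PT$ is singular exactly when $\ran P\cap \dom T^{*}\subset \ker T^{*}$, and since $\ran P\cap \dom T^{*}=\mul T^{**}\cap \dom T^{*}=(\dom T^{*})^{\perp}\cap \dom T^{*}=\{0\}$, this inclusion is trivial. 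To get the precise shape of the closure I then substitute $\ran P\cap \dom T^{*}=\{0\}$ into the formula \eqref{Mleb22}: its second factor is $\ran P\cap(\{0\})^{\perp}=\ran P\cap \sK=\mul T^{**}$, giving $(T_{\rm sing})^{**}=\cdom T^{**}\times \mul T^{**}$ as claimed.

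It remains to justify that $T=T_{\rm reg}+T_{\rm sing}$ is a genuine distinguished orthogonal range decomposition, and here I would invoke Corollary~\ref{osdd}(ii): its hypothesis $\cmul T\subset \ran Q$ holds for $Q=P$ because \eqref{mul} gives $\cmul T\subset \mul T^{**}=\ran P$. Hence $T=(I-P)T+PT$ is an orthogonal range decomposition with summands $(I-P)T$ and $PT$, and it is distinguished because part (i) already shows $T_{\rm reg}=(I-P)T$ is an operator. There is no serious obstacle once Lemma~\ref{Lebtypelem} is available; the only real content is recognizing that the single choice $Q=P$ forces both extreme situations at once — $\ker P\cap \dom T^{*}$ is dense in $\ker P$ (making the regular condition automatic) and $\ran P\cap \dom T^{*}=\{0\}$ (making the singular condition automatic) — so the mild care needed is in the orthogonality bookkeeping and in correctly evaluating the product $(PT)^{**}$ from \eqref{Mleb22}.
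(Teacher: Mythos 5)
Your proposal is correct and follows essentially the same route as the paper: both specialize Lemma~\ref{Lebtypelem} to $Q=P$, using $\ker P=\cdom T^{*}$ to make the regularity condition \eqref{Mleb1} automatic, using $\ran P\cap\dom T^{*}=\mul T^{**}\cap\dom T^{*}=\{0\}$ to make the singularity condition \eqref{Mleb2} automatic, and reading the form of $(T_{\rm sing})^{**}$ off \eqref{Mleb22}. The only cosmetic difference is in the last step, where you obtain the identity $T=(I-P)T+PT$ from Corollary~\ref{osdd}(ii) (via $\cmul T\subset\ran P$), whereas the paper invokes Lemma~\ref{orth+} directly (via $P\,\mul T\subset\mul T$); these amount to the same argument, since Corollary~\ref{osdd} is itself a packaging of Lemma~\ref{orth+} together with Lemma~\ref{Lebtypelem}(i).
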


\begin{proof}
Apply Lemma \ref{Lebtypelem} with $Q=P$
so that $\ran Q=\mul T^{**}$
and $\ker Q=\cdom T^{*}$. Then
\[
   \clos(\ker Q \cap \dom T^* )=\cdom T^{*}=\ker Q,
\]
which implies that $(I-P)T$ is regular. Moreover,
\[
 \ran Q \cap \dom T^*= \mul T^{**} \cap \dom T^{*}=\{0\}  \subset \ker T^*,
\]
which implies that $PT$ is singular. The form of $(PT)^{**}$
is also clear from Lemma \ref{Lebtypelem}.

To show the identity \eqref{HSSSdec},
observe that $\mul T \subset \mul T^{**}$.
Since $P$ is the orthogonal projection onto $\mul T^{**}$
one sees that $P$ maps $\mul T$ into $\mul T$.
Therefore \eqref{HSSSdec} follows
from Lemma \ref{orth+}.
\end{proof}

\begin{definition}\label{DefLebdeco}
The distinguished orthogonal range decomposition in \eqref{HSSSdec} is
called the \textit{Lebesgue decomposition} of the relation $T$.
\end{definition}

The decomposition \eqref{HSSSdec} is the abstract variant of the
Lebesgue decomposition of a measure in terms of another measure; cf.
\cite{HSeSnLeb}. This decomposition for relations was established in
\cite[Theorem~4.1]{HSeSnSz}.  In the case that $T$ is an operator
such a decomposition has been first proved in \cite{J,Ota1}.

\begin{corollary}\label{CorRegSing}
Let $T$ be a relation  from a Hilbert space $\sH$ to a Hilbert space
$\sK$. Then the following equivalences hold:
\begin{equation}\label{regu}
 T \mbox{ is regular} \quad \Leftrightarrow \quad  T=T_{\rm   reg} \quad
 \Leftrightarrow \quad T_{\rm sing}=0,
\end{equation}
and
\begin{equation}\label{regu1}
 T \mbox{ is singular} \quad \Leftrightarrow \quad  T=T_{\rm sing} \quad
 \Leftrightarrow \quad T_{\rm reg}=0.
\end{equation}
\end{corollary}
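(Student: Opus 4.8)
The plan is to reduce both chains of equivalences in \eqref{regu} and \eqref{regu1} to the single question of where $\ran T$ sits relative to the two orthogonal summands $\ran P=\mul T^{**}$ and $\ker P=\cdom T^{*}$, and then to read off regularity and singularity from Propositions \ref{stoch0} and \ref{Tsinglemma0}. Throughout I use that $P$ is the orthogonal projection onto $\mul T^{**}$, so that $\ran P=\mul T^{**}$ and, since $(\dom T^{*})^{\perp}=\mul T^{**}$, also $\ker P=\cdom T^{*}$.

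First I would record an elementary dictionary. Splitting $f'=(I-P)f'+Pf'$ for $\{f,f'\}\in T$ and using $P^{2}=P$, one checks directly that
\[
 T=T_{\rm reg}\iff P\,\ran T=\{0\}\iff T_{\rm sing}=0,
\]
and symmetrically
\[
 T=T_{\rm sing}\iff (I-P)\,\ran T=\{0\}\iff T_{\rm reg}=0,
\]
where $P\,\ran T=\{0\}$ means $\ran T\subset\cdom T^{*}$ and $(I-P)\,\ran T=\{0\}$ means $\ran T\subset\mul T^{**}$. This already settles the two inner equivalences in each line with no further input, since $T_{\rm reg}=(I-P)T$ and $T_{\rm sing}=PT$ have ranges $(I-P)\ran T$ and $P\ran T$.

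It remains to connect these range inclusions to regularity and singularity, and the only ingredient needed is the pair of inclusions $\mul T^{**}\subset\cran T=\cran T^{**}$. Both come straight from the orthogonality identities in the preliminaries: $(\ran T)^{\perp}=\ker T^{*}=(\ran T^{**})^{\perp}$ gives $\cran T=\cran T^{**}$, while the trivial $\ker T^{*}\subset\dom T^{*}$ gives, on taking complements, $\mul T^{**}=(\dom T^{*})^{\perp}\subset(\ker T^{*})^{\perp}=\cran T$. With this, for \eqref{regu} I argue that $T_{\rm sing}=0$ forces $\ran T\subset\cdom T^{*}$, hence $\mul T^{**}\subset\cran T\subset\cdom T^{*}=(\mul T^{**})^{\perp}$ and so $\mul T^{**}=\{0\}$, which is regularity by Proposition \ref{stoch0}; conversely regularity gives $\mul T^{**}=\{0\}$, whence $P=0$, $T_{\rm reg}=T$ and $T_{\rm sing}=0$. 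For \eqref{regu1} I argue symmetrically: $T_{\rm reg}=0$ forces $\ran T\subset\mul T^{**}$, so $\cran T\subset\mul T^{**}\subset\cran T$, i.e.\ $\cran T=\mul T^{**}$; combining with $\mul T^{**}\subset\ran T^{**}\subset\cran T^{**}=\cran T$ yields $\ran T^{**}=\mul T^{**}$, which is singularity by Proposition \ref{Tsinglemma0}; conversely singularity gives $\ran T^{**}=\mul T^{**}$, hence $\ran T\subset\mul T^{**}$ and $T_{\rm reg}=0$.

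I do not expect a genuine obstacle: the content is entirely bookkeeping between a relation, its range, and its closure. The only point needing care is the passage from the non-closed inclusions $\ran T\subset\cdom T^{*}$ and $\ran T\subset\mul T^{**}$ to conclusions about the closed objects $\mul T^{**}$ and $\ran T^{**}$, which is precisely what $\mul T^{**}\subset\cran T=\cran T^{**}$ supplies. As an alternative one could deduce the inner equivalences directly from the decomposition $T=T_{\rm reg}+T_{\rm sing}$ of Theorem \ref{jor0} together with the explicit form of $(T_{\rm sing})^{**}$, but the range-inclusion route seems the most economical.
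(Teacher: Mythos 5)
Your proposal is correct; every step checks out, including the two pivotal inclusions $\cran T=\cran T^{**}$ and $\mul T^{**}\subset\cran T$ (the latter from $\ker T^{*}\subset\dom T^{*}$ and the duality identities), and the reduction of the inner equivalences to $P\,\ran T=\{0\}$ resp.\ $(I-P)\,\ran T=\{0\}$. However, your route differs from the paper's in a meaningful way. The paper handles the outer equivalences by invoking Theorem \ref{jor0} itself: the implication $T=T_{\rm reg}\Rightarrow T$ regular is obtained from the fact that $(T_{\rm reg})^{**}$ is an operator (Theorem \ref{jor0}~(i)); the implication $T=T_{\rm sing}\Rightarrow T$ singular from the explicit formula $(T_{\rm sing})^{**}=\cdom T^{**}\times\mul T^{**}$ (Theorem \ref{jor0}~(ii)); and singularity of $T$ forces $T_{\rm reg}=0$ via the inclusion $(I-P)T\subset(I-P)T^{**}$. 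You never touch the closures of $T_{\rm reg}$ and $T_{\rm sing}$ at all: instead you convert everything into range inclusions and let the single inclusion $\mul T^{**}\subset\cran T=\cran T^{**}$ do the work, after which Proposition \ref{stoch0} (regularity $\Leftrightarrow\mul T^{**}=\{0\}$) and Proposition \ref{Tsinglemma0}~(iii) (singularity $\Leftrightarrow\ran T^{**}=\mul T^{**}$) close the argument. What the paper's proof buys is brevity, since Theorem \ref{jor0} is already established when the corollary is stated; what your proof buys is independence from that theorem --- it needs only the adjoint identities of Section \ref{sec2} and the two characterization propositions, and it makes transparent the mechanism behind the implication $T_{\rm sing}=0\Rightarrow T$ regular, namely that $\mul T^{**}$ ends up contained in its own orthogonal complement. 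Both are sound; yours is the more elementary and self-contained of the two.
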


\begin{proof}
The last equivalences in \eqref{regu} and \eqref{regu1}
are clear from \eqref{prod0+} and \eqref{HSSSdec}:
\[
T=(I-P)T \quad \Leftrightarrow \quad PT=0,
\quad \mbox{and} \quad
T=PT \quad \Leftrightarrow \quad (I-P)T=0.
\]

If $T$ is regular, i.e., $\mul T^{**}=\{0\}$,
then $P=0$ and $T=T_{\rm reg}$.
Conversely, if $T=T_{\rm reg}$ then
$T^{**}=(T_{\rm reg})^{**}$ is an operator,
i.e., $T$ is regular.

If $T$ is singular, i.e.,
$T^{**}=\dom T^{**} \times \mul T^{**}$, then
$(I-P)T \subset (I-P)T^{**}$, shows that
$T_{\rm reg}=0$ and thus $T=T_{\rm sing}$.
Conversely, if $T=T_{\rm sing}$ then
 it follows that $T^{**}=(T_{\rm sing})^{**}
=\cdom T^{**}\times \mul T^{**}$, i.e., $T$ is singular.
\end{proof}

By Theorem \ref{jor0} the regular part $T_{\rm reg}$ is closable. In
order to find its closure note the following property
\begin{equation}\label{chart}
 T_{\rm reg} \subset T^{**}.
\end{equation}
To see this observe for any $\{f,f'\} \in T$ that
\[
 \{f, (I-P)f'\}=\{f,f'\} -\{0,Pf'\}, \quad \{f,f'\} \in T \subset T^{**},
 \quad
 \{0, Pf'\} \in T^{**},
\]
which gives \eqref{chart}. In particular it follows from \eqref{chart}
that $(T_{\rm reg})^{**} \subset T^{**}$.
Now consider the Lebesgue decomposition
for the closure $T^{**}$ of $T$.
The regular and singular parts of   $T^{**}$
are given by
\[
(T^{**})_{\rm  reg}=(I-P)T^{**}, \quad (T^{**})_{\rm  sing}=PT^{**},
\]
since the corresponding
orthogonal projection is given by the same projection $P$.
Thus $(T^{**})_{\rm reg}=(I-P)T^{**}$ is a regular operator,
while $(T^{**})_{\rm  sing}=PT^{**}$
is a singular relation.

\begin{theorem}\label{jor}
Let $T$ be a relation  from a Hilbert space $\sH$ to a Hilbert space
$\sK$.
The relation $T^{**}$ admits the Lebesgue decomposition:
\begin{equation}\label{HSSSdec+}
 T^{**}= (T^{**})_{\rm   reg} +  (T^{**})_{\rm  sing},
\end{equation}
where the regular part $(T^{**})_{\rm   reg}$ satisfies
\begin{equation}\label{HSSSdec+l}
  (T^{**})_{\rm reg} \subset T^{**},
\end{equation}
and $(T^{**})_{\rm reg}$ is closed.
The regular and singular parts of $T^{**}$ satisfy
\begin{equation}\label{HSSSdec+0}
({T^{**}})_{\rm   reg} =({T}_{\rm   reg})^{**}, \quad
 ({T^{**}})_{\rm   sing}=\dom T^{**} \times \mul T^{**}.
\end{equation}
The singular part $(T^{**})_{\rm  sing}$ is closed if and only $\dom
T^{**}$ is closed. Moreover,
\begin{equation}\label{nelo}
(({T^{**}})_{\rm  sing})^{**} =\cdom T^{**} \times \mul T^{**}
=  ({T}_{\rm  sing})^{**}.
\end{equation}
\end{theorem}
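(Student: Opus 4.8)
The plan is to deduce the whole statement from the Lebesgue decomposition of the \emph{closed} relation $S:=T^{**}$. Since $S$ is closed one has $S^{**}=S$ and $\mul S=\mul T^{**}$, so the orthogonal projection governing the Lebesgue decomposition of $S$ is exactly the projection $P$ onto $\mul T^{**}$ already used for $T$ in \eqref{resi}. Applying Theorem~\ref{jor0} to $S$ therefore gives at once the distinguished orthogonal range decomposition \eqref{HSSSdec+} with regular part $(T^{**})_{\rm reg}=(I-P)T^{**}=S_{\rm reg}$ and singular part $(T^{**})_{\rm sing}=PT^{**}=S_{\rm sing}$. The inclusion \eqref{HSSSdec+l} is then nothing but \eqref{chart} read for $S$, namely $(T^{**})_{\rm reg}=S_{\rm reg}\subset S^{**}=T^{**}$.

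Next I would settle the regular part. The key observation is that $(I-P)T^{**}$ is precisely the orthogonal operator part of the closed relation $T^{**}$, because $P$ projects onto $\mul T^{**}=\mul(T^{**})$; by the discussion in Section~\ref{sec2.2} this orthogonal operator part is a closed operator, which proves that $(T^{**})_{\rm reg}$ is closed. The identity $(T^{**})_{\rm reg}=(T_{\rm reg})^{**}$ then follows from two inclusions. On the one hand $T_{\rm reg}=(I-P)T\subset (I-P)T^{**}=(T^{**})_{\rm reg}$, and since the latter is closed this yields $(T_{\rm reg})^{**}\subset (T^{**})_{\rm reg}$. On the other hand every element of $(T^{**})_{\rm reg}$ has the form $\{f,(I-P)f'\}$ with $\{f,f'\}\in T^{**}=\overline{T}$, hence is a limit of points $\{f_n,(I-P)f'_n\}\in (I-P)T=T_{\rm reg}$ with $\{f_n,f'_n\}\in T$; this gives $(T^{**})_{\rm reg}\subset \overline{T_{\rm reg}}=(T_{\rm reg})^{**}$ and hence equality.

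For the singular part I would compute $PT^{**}$ directly. One has $\dom PT^{**}=\dom T^{**}$ and $\ran PT^{**}\subset \ran P=\mul T^{**}$, so $PT^{**}\subset \dom T^{**}\times\mul T^{**}$; conversely $\mul PT^{**}=P\,\mul T^{**}=\mul T^{**}$ (as $\mul T^{**}\subset\ran P$), which lets one fill out the full product and establishes the second identity in \eqref{HSSSdec+0}. Since $\mul T^{**}=(\dom T^*)^{\perp}$ is automatically closed, the product $\dom T^{**}\times\mul T^{**}$ is closed if and only if $\dom T^{**}$ is closed, which is the stated closedness criterion. Finally, taking closures and using that the closure of a product of subspaces is the product of their closures gives $((T^{**})_{\rm sing})^{**}=\cdom T^{**}\times\mul T^{**}$, which coincides with $(T_{\rm sing})^{**}$ by Theorem~\ref{jor0}(ii); this is \eqref{nelo}.

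Everything but the regular part is bookkeeping built on Theorem~\ref{jor0} and Lemma~\ref{Lebtypelem}; I expect the one genuinely substantive point to be the closedness of $(T^{**})_{\rm reg}$, i.e.\ the identification of $(I-P)T^{**}$ with the orthogonal operator part of $T^{**}$ (equivalently, the nontrivial inclusion $(T^{**})_{\rm reg}\subset(T_{\rm reg})^{**}$). Once that is in hand the equality $(T^{**})_{\rm reg}=(T_{\rm reg})^{**}$ and all remaining assertions follow mechanically.
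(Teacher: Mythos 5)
Your proposal is correct, and its skeleton coincides with the paper's: both apply Theorem~\ref{jor0} to the closed relation $T^{**}$, noting that the governing projection is the same $P$ (since $\mul (T^{**})^{**}=\mul T^{**}$), and both obtain \eqref{HSSSdec+l} by reading the argument behind \eqref{chart} for $T^{**}$. Where you genuinely differ is in the two substantive steps. For the closedness of $(T^{**})_{\rm reg}$ the paper runs a direct sequential argument: if $\{f_n,g_n\}\in (T^{**})_{\rm reg}$ converges to $\{f,g\}$, then $\{f,g\}\in T^{**}$ by \eqref{HSSSdec+l} and $g_n\in\cdom T^*$ forces $g=(I-P)g$, so $\{f,g\}\in (I-P)T^{**}$. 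You instead identify $(I-P)T^{**}$ with the orthogonal operator part of the closed relation $T^{**}$ and invoke the fact from Section~\ref{sec2.2} that this operator part is closed (equivalently, it equals $T^{**}\cap\bigl(\sH\times(\mul T^{**})^{\perp}\bigr)$, an intersection of closed subspaces); the paper makes this same identification, but only as a remark \emph{after} the theorem. For the identity $(T^{**})_{\rm reg}=(T_{\rm reg})^{**}$ the paper computes adjoints, $((I-P)T)^{*}=T^{*}(I-P)=((I-P)T^{**})^{*}$ via Lemma~\ref{PT}, and then takes double adjoints, whereas you use the more elementary observation that the bounded map $\{f,f'\}\mapsto\{f,(I-P)f'\}$ carries $\overline{T}$ into $\overline{(I-P)T}$, which gives $(T^{**})_{\rm reg}\subset (T_{\rm reg})^{**}$ directly from the density of $T$ in $T^{**}$. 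Both routes are valid; yours trades adjoint calculus for continuity plus the standard structure theory of closed relations, which is arguably more transparent, while the paper's computation yields the extra identity $(T_{\rm reg})^{*}=((T^{**})_{\rm reg})^{*}$ along the way. Your treatment of the singular part (direct computation of $PT^{**}$ using $\mul PT^{**}=\mul T^{**}$ and linearity, then closures of products for \eqref{nelo}) matches the paper's proof up to bookkeeping, so no gaps remain.
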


\begin{proof}
The Lebesgue decomposition \eqref{HSSSdec+} follows directly from
Theorem \ref{jor0}, since the orthogonal projection $P$ maps onto
$\mul T^{**}$.
 Furthermore, the inclusion \eqref{HSSSdec+l} follows from
\eqref{chart}. In order to show that $(T^{**})_{\rm  reg}$ is
closed, let $\{f_{n},g_{n}\} \in (T^{**})_{\rm  reg}$ converge to
$\{f,g\} \in \sH\times \sK$. By \eqref{HSSSdec+l}  one has that
$\{f,g\} \in T^{**}$. Since $\{f_{n},g_{n}\} \in (T^{**})_{\rm reg}$
it follows that $g_{n} \in \cdom T^{*}$ and hence $g \in \cdom
T^{*}$, i.e., $g=(I-P)g$. This implies that $\{f,g\} \in
(I-P)T^{**}=(T^{**})_{\rm reg}$.

To see the first identity in  \eqref{HSSSdec+0} observe that
\[
 ((I-P)T)^*=T^*(I-P)=((I-P)T^{**})^*.
\]
One concludes that $(T_{\rm  reg})^*=((T^{**})_{\rm reg})^*$
and,  consequently,
\[
(T_{\rm  reg})^{**}=((T^{**})_{\rm  reg})^{**}.
\]
Since $(T^{**})_{\rm  reg}$ is closed,
the first identity in  \eqref{HSSSdec+0} follows.

To see the second identity in  \eqref{HSSSdec+0}
observe that it follows
directly from the definition that
\[
(T^{**})_{\rm sing}=PT^{**} \subset \dom T^{**} \times \mul T^{**}.
\]
To see the reverse inclusion let $\{f,g\} \in \dom T^{**} \times
\mul T^{**}$. Then there exists $f'$ such that $\{f,f'\} \in T^{**}$
and it is clear from \eqref{HSSSdec+l} that $\{f, (I-P) f'\} \in
T^{**}$, so that
\[
 \{f, (I-P)f'+g\} \in T^{**}.
\]
By definition $\{f,g\} =\{f, P((I-P)f'+g)\} \in (T^{**})_{\rm sing}$.
Hence the second identity in  \eqref{HSSSdec+0}
has been established.

The second identity in \eqref{HSSSdec+0} shows that $(T^{**})_{\rm
sing}$ is closed if and only $\dom T^{**}$ is closed. Moreover, by
taking closures in this second identity and comparing the result
with the formula (ii) in Theorem \ref{jor0} the last assertion in
\eqref{nelo} follows.
\end{proof}

Let $T$ be a relation  from a Hilbert space $\sH$ to a Hilbert space
$\sK$. The closed relation $T^{**}$ can also be written as an
orthogonal (componentwise) sum
\[
 T^{**}= (T^{**})_{\rm reg} \hoplus (\{0\} \times \mul T^{**}).
\]
The regular part $(T^{**})_{\rm reg}$ now serves as an orthogonal
operator part of $T^{**}$; cf. \cite{HSnSz}. The identity in the
Lebesgue decomposition \eqref{HSSSdec} persists under closures.

\begin{corollary}
The relation $T^{**}$ admits the following   decomposition:
\begin{equation}\label{HSSSdec3--}
 T^{**}=(T_{\rm  reg})^{**} + (T_{\rm  sing})^{**}.
\end{equation}
\end{corollary}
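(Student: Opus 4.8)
The plan is to combine the two Lebesgue decompositions already established — that of $T$ in \eqref{HSSSdec} and that of $T^{**}$ in \eqref{HSSSdec+} — via the identities in \eqref{HSSSdec+0} and \eqref{nelo}. Recall from Theorem~\ref{jor} that the regular and singular parts of $T^{**}$ satisfy $(T^{**})_{\rm reg}=(T_{\rm reg})^{**}$ and $(({T^{**}})_{\rm sing})^{**}=({T}_{\rm sing})^{**}$. So the left-hand side $T^{**}$ already has its own Lebesgue decomposition $T^{**}=(T^{**})_{\rm reg}+(T^{**})_{\rm sing}$, and the goal is to rewrite this using the closures of the parts of $T$ itself.

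First I would start from \eqref{HSSSdec+} and substitute the first identity in \eqref{HSSSdec+0}, giving
\[
 T^{**}=(T_{\rm reg})^{**}+(T^{**})_{\rm sing}.
\]
The remaining task is to replace $(T^{**})_{\rm sing}$ by $(T_{\rm sing})^{**}$ inside this \emph{sum of relations}. Here one must be careful: \eqref{nelo} only asserts that $(T^{**})_{\rm sing}$ and $T_{\rm sing}$ have the \emph{same closure}, namely $\cdom T^{**}\times\mul T^{**}$. The point to exploit is that in the sum $T_1+T_2$ from \eqref{Tdecomp00}, the relation $(T_{\rm reg})^{**}$ is already closed (by Theorem~\ref{jor}) with $\dom(T_{\rm reg})^{**}\subset\cdom T^{**}$, and the multivalued part $\mul T^{**}$ contributed by the singular summand is a fixed closed subspace.

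The key step will be to observe that since all three relations in \eqref{HSSSdec} share the common domain $\dom T$ and the singular summand contributes only the multivalued directions $\mul T^{**}$, passing to closures in the sum is legitimate: adding a purely multivalued closed relation $\{0\}\times\mul T^{**}$ to $(T_{\rm reg})^{**}$ yields precisely $T^{**}$. Concretely, I would verify that
\[
 (T_{\rm reg})^{**}+(T_{\rm sing})^{**}
 =(T_{\rm reg})^{**}+\bigl(\cdom T^{**}\times\mul T^{**}\bigr),
\]
using that the second summand's operator content is absorbed into the domain of the closed regular part, so that the sum equals $(T^{**})_{\rm reg}\hoplus(\{0\}\times\mul T^{**})=T^{**}$.

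\textbf{The main obstacle} will be justifying that the sum of relations commutes with closure here, since in general $\overline{T_1}+\overline{T_2}$ need not equal $\overline{T_1+T_2}$, nor even $T^{**}$. The safeguard is that $(T_{\rm reg})^{**}$ is \emph{closed} and that the only difference between $(T^{**})_{\rm sing}$ and $(T_{\rm sing})^{**}$ lies in whether $\dom T^{**}$ is closed — a discrepancy that is swallowed once we add back $(T_{\rm reg})^{**}$, whose domain already contains the dense part of $\dom T^{**}$ inside $\cdom T^{**}$. I would check both inclusions directly on elements $\{f,f'\}$: the inclusion $\subset$ is immediate from \eqref{HSSSdec+} together with $(T^{**})_{\rm sing}\subset(T_{\rm sing})^{**}$ (via \eqref{nelo}), and the reverse inclusion follows because any $\{f,g\}\in(T_{\rm sing})^{**}=\cdom T^{**}\times\mul T^{**}$ with $f\in\dom(T_{\rm reg})^{**}$ decomposes against an element of $(T_{\rm reg})^{**}$ to land back in $T^{**}$, exactly as in the computation already used to prove the second identity in \eqref{HSSSdec+0}.
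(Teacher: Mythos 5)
Your proposal is correct and follows essentially the same route as the paper: both arguments start from the Lebesgue decomposition $T^{**}=(T^{**})_{\rm reg}+(T^{**})_{\rm sing}$ of Theorem \ref{jor}, substitute $(T^{**})_{\rm reg}=(T_{\rm reg})^{**}$ together with $(T^{**})_{\rm sing}\subset(T_{\rm sing})^{**}$ for one inclusion, and obtain equality because $(T_{\rm sing})^{**}=\cdom T^{**}\times\mul T^{**}$ contributes only values in $\mul T^{**}$, which are absorbed back into $T^{**}$ via $(T_{\rm reg})^{**}\subset T^{**}$. The paper compresses this last step into the domain identities $\dom(T_{\rm reg})^{**}=\dom T^{**}$ and $\dom(T_{\rm reg})^{**}\cap\dom(T_{\rm sing})^{**}=\dom T^{**}$, but the content matches your element-wise check.
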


\begin{proof}
Recall from \eqref{HSSSdec+} that $T^{**}$ admits the Lebesgue
decomposition
\[
 T^{**}=(T^{**})_{\rm reg}+(T^{**})_{\rm sing}.
\]
Moreover, by Theorem \ref{jor} one has
\[
(T^{**})_{\rm reg}=(T_{\rm reg})^{**}, \quad
(T^{**})_{\rm sing}=\dom T^{**} \times \mul T^{**} \subset (T_{\rm sing})^{**},
\]
and thus the left-hand side of \eqref{HSSSdec3--} is included in the
right-hand side. Here equality prevails, since $\dom (T_{\rm
reg})^{**}=\dom T^{**}$ and $\dom (T_{\rm reg})^{**}\cap \dom
(T_{\rm sing})^{**}=\dom T^{**}$; cf. Theorem \ref{jor0}~(ii) and
\eqref{HSSSdec+0}.
\end{proof}

\section{Lebesgue type decompositions for relations}\label{sec5}

In this section the notion of Lebesgue decomposition appearing in
Definition \ref{DefLebdeco} is formalized by claiming from its
additive components appropriate properties which imitate closely the
ones familiar from  measure theory. This leads to a general
definition of Lebesgue type decomposition for linear relations. The
main purpose here is to describe all such decompositions of a given
linear relation or operator.

\begin{definition}\label{defLebtype}
Let $T$  be a linear relation from a Hilbert space $\sH$ to a
Hilbert space $\sK$. Then $T$ is said to have a \textit{Lebesgue
type decomposition}
if it admits a distinguished
orthogonal range decomposition $T=T_1+T_2$
where $T_1$ is regular and $T_2$ is singular.
\end{definition}

Note that the Lebesgue decomposition \eqref{HSSSdec} in Theorem
\ref{jor0} is an example of a Lebesgue type decomposition. The
Lebesgue type decompositions of  a relation $T$ are characterized in
the following theorem.

\begin{theorem}\label{Lebtype}
Let $T$  be a linear relation from a Hilbert space $\sH$ to a
Hilbert space $\sK$ and let $\sM$ be a closed linear subspace in
$\sK$ such that
\begin{equation}\label{Mleb}
   \clos(\sM^\perp \cap \dom T^* )=\sM^\perp, \quad
   \sM \cap \dom T^* \subset \ker T^*.
\end{equation}
Then the relations $(I-P_{\sM})T$ and $P_{\sM}T$ satisfy
\begin{enumerate}[ $ (\rm i)$]
\item $((I-P_{\sM})T)^{**}$ is an operator,
i.e., $(I-P_{\sM})T$ is regular;
\item $( P_{\sM}T )^{**}
=\cdom T^{**}\times (\sM \cap (\sM \cap \dom T^{*})^{\perp})$,
i.e., $P_{\sM}T$ is singular,
\end{enumerate}
and $T$ has a Lebesgue type decomposition of the form
\begin{equation}\label{lebdeco}
T = (I-P_{\sM}) T + P_{\sM}T.
\end{equation}
Conversely, if $T$ has a Lebesgue type decomposition $T=T_1+T_2$ as
in Definition~\ref{defLebtype}, then the subspace $\sM=\cran T_2$
satisfies \eqref{Mleb} and generates this decomposition via
\eqref{lebdeco}: $T_1=(I-P_{\sM}) T$ and $T_2=P_{\sM}T$.
\end{theorem}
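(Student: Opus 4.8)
The plan is to prove Theorem~\ref{Lebtype} in two directions, each of which reduces almost entirely to the machinery already assembled in Lemma~\ref{Lebtypelem}. For the forward direction I would set $Q=P_{\sM}$, so that $\ran Q=\sM$ and $\ker Q=\sM^{\perp}$. Then the two hypotheses in \eqref{Mleb} are \emph{exactly} the conditions \eqref{Mleb1} and \eqref{Mleb2} of Lemma~\ref{Lebtypelem}, rewritten with $\ran Q$ and $\ker Q$ replaced by $\sM$ and $\sM^{\perp}$. Thus statement (i) that $(I-P_{\sM})T$ is regular follows immediately from Lemma~\ref{Lebtypelem}(ii), and statement (ii) that $P_{\sM}T$ is singular follows from Lemma~\ref{Lebtypelem}(iii), with the explicit form of $(P_{\sM}T)^{**}$ read off from \eqref{Mleb22}. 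The only point requiring a small remark is why \eqref{lebdeco} is genuinely an \emph{orthogonal range} decomposition: I would invoke Lemma~\ref{orth+}, for which one needs $P_{\sM}\,\mul T\subset \mul T$. Since $(I-P_{\sM})T$ is an operator by (i), Lemma~\ref{Lebtypelem}(i) gives $\mul T\subset \sM=\ran P_{\sM}$, hence $P_{\sM}\,\mul T=\mul T\subset\mul T$, so Lemma~\ref{orth+} yields $T=(I-P_{\sM})T+P_{\sM}T$ and the ranges are orthogonal by construction.

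For the converse I would start from a Lebesgue type decomposition $T=T_1+T_2$ and set $\sM=\cran T_2$. By Lemma~\ref{osd}(i) applied to this orthogonal range decomposition there is an orthogonal projection $Q$ with $T_1=(I-Q)T$, $T_2=QT$; taking $Q$ to be the projection onto $\cran T_2$ (as in the proof of Lemma~\ref{osd}) identifies $Q=P_{\sM}$, so that $T_1=(I-P_{\sM})T$ and $T_2=P_{\sM}T$ as claimed. It then remains to verify that $\sM$ satisfies \eqref{Mleb}. Regularity of $T_1=(I-P_{\sM})T$ is the hypothesis, so by Lemma~\ref{Lebtypelem}(ii) (read in the $\Leftarrow$/$\Rightarrow$ direction as an equivalence) the first condition $\clos(\sM^{\perp}\cap\dom T^*)=\sM^{\perp}$ holds; similarly singularity of $T_2=P_{\sM}T$ together with Lemma~\ref{Lebtypelem}(iii) delivers the second condition $\sM\cap\dom T^*\subset\ker T^*$.

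The genuine subtlety in the converse, and the step I expect to be the main obstacle, lies in justifying that $Q=P_{\sM}$ with $\sM=\cran T_2$ is \emph{the same} projection produced by Lemma~\ref{osd}, and that this choice is forced rather than merely available. In Lemma~\ref{osd} the projection is taken onto $\cran T_2$, so this matches by design; but one must check that $Q$ so chosen indeed satisfies the invariance $Q\,\mul T\subset\mul T$ required to invoke that lemma, which here follows because $T_1$ being an operator forces $\mul T=\mul T_2\subset\cran T_2=\sM$, whence $P_{\sM}$ fixes $\mul T$. I would present this invariance check explicitly, since it is the hinge connecting the abstract decomposition to the concrete projection $P_{\sM}$. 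Once that is in place the two Lemma~\ref{Lebtypelem} equivalences close the argument mechanically, and I would simply cite \eqref{Mleb22} to record the explicit shape of $(P_{\sM}T)^{**}$ stated in (ii).
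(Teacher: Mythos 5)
Your proposal is correct and follows essentially the same route as the paper: the forward direction is Lemma~\ref{Lebtypelem} with $Q=P_{\sM}$ plus Lemma~\ref{orth+} (the paper gets $\mul T\subset\sM$ from \eqref{Mleb11}, you get it from Lemma~\ref{Lebtypelem}(i) via regularity implying operator, an immaterial difference), and the converse uses the projection onto $\cran T_2$ from (the proof of) Lemma~\ref{osd} together with the if-and-only-if parts of Lemma~\ref{Lebtypelem}, exactly as in the paper. The only remark worth making is that the ``genuine subtlety'' you flag in the converse is not actually an obstacle: Lemma~\ref{osd}(i) delivers the invariance $Q\,\mul T\subset\mul T$ as a \emph{conclusion} (its proof establishes $QT=T_2$ and $(I-Q)T=T_1$ directly from the orthogonality of ranges and then invokes Lemma~\ref{orth+}), so your explicit check that $\mul T=\mul T_2\subset\cran T_2$, while correct, is redundant.
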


\begin{proof}
Assume that $\sM \subset \sK$ is a closed linear subspace which
satisfies the conditions \eqref{Mleb}. Then the assertions (i) and
(ii) follow from Lemma \ref{Lebtypelem} with $Q=P_{\sM}$. Moreover,
by Lemma \ref{Lebtypelem} $\mul T^{**} \subset \sM$. In particular,
this implies that $\mul T \subset \sM$, so that $P_{\sM} \,\mul T
\subset \mul T$. Hence Lemma \ref{orth+} gives the identity
\eqref{lebdeco}.

To prove the converse let $T=T_1+T_2$ be a Lebesgue type
decomposition of $T$ and let $\sM=\cran T_2$. Then (the proof of)
Lemma~\ref{osd} shows that $T_1=(I-P_{\sM})T$ and $T_2=P_{\sM}T$.
That $\sM=\cran T_2$ satisfies the properties in \eqref{Mleb}
follows now from Lemma \ref{Lebtypelem}.
\end{proof}

The first condition in \eqref{Mleb} in conjunction
with Lemma \ref{Lebtypelem} leads to the following
alternative description.

\begin{lemma}\label{corl}
Let $T$  be a linear relation from a Hilbert space
$\sH$ to a Hilbert space $\sK$ and let
$\sM$ be a closed linear subspace in $\sK$.
If \eqref{Mleb} is satisfied then there exists a closed
linear subspace $\sL \subset \cdom T^{*}$, such that
\begin{equation}\label{L}
 \sM=\mul T^{**} \,\oplus \sL,
\end{equation}
and, in terms of $\sL$, the conditions in \eqref{Mleb}
 are equivalent to
\begin{equation}\label{Lleb}
\clos( \sL^\perp \cap \dom T^* ) =\sL^\perp \cap \cdom T^*,
\quad
\sL \cap \dom T^* \subset \ker T^*,
\end{equation}
respectively. Furthermore $( P_{\sM}T )^{**}$ has the form
\begin{equation}\label{Llebsing1}
 ( P_{\sM}T )^{**}
=\cdom T^{**}\times ((\mul T^{**} \oplus \sL) \cap (\sL \cap \dom T^{*})^{\perp}).
\end{equation}
Conversely, if $\sL$ is a closed linear subspace in  $\cdom T^{*}$,
such that \eqref{Lleb} is satisfied, then $\sM$ defined by \eqref{L}
is a closed linear subspace of $\sK$ which satisfies \eqref{Mleb}.
\end{lemma}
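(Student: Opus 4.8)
The plan is to work with the orthogonal splitting of $\sK$ into $\cdom T^*$ and $\mul T^{**} = (\cdom T^*)^\perp$. The first thing I would establish is that the second condition in \eqref{Mleb}, namely $\sM \cap \dom T^* \subset \ker T^*$, forces $\mul T^{**} \subset \sM$, so that $\sM$ contains the whole multivalued part of the closure. Indeed, since $\mul T^{**} \cap \dom T^*$ lies in $\ker T^*$ and $\mul T^{**} \perp \cdom T^*$, one checks that $P_\sM$ restricted to $\mul T^{**}$ behaves correctly; this is already implicit in the proof of Theorem~\ref{Lebtype}, where it is noted that $\mul T^{**} \subset \sM$. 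Granting this, I would define $\sL := \sM \ominus \mul T^{**} = \sM \cap (\mul T^{**})^\perp = \sM \cap \cdom T^*$, which is a closed subspace of $\cdom T^*$, and the orthogonal decomposition \eqref{L}, $\sM = \mul T^{**} \oplus \sL$, is then immediate.

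**Translating the two conditions.** With the decomposition \eqref{L} in hand, the core of the proof is rewriting the two conditions of \eqref{Mleb} in terms of $\sL$. For the second condition, I would use that $\mul T^{**} \cap \dom T^*=\{0\}$ together with $\mul T^{**}\subset\ker T^*$ to reduce $\sM \cap \dom T^* \subset \ker T^*$ to $\sL \cap \dom T^* \subset \ker T^*$, the second condition in \eqref{Lleb}. For the first condition I would compute $\sM^\perp$. Since $\sM = \mul T^{**} \oplus \sL$ with $\sL \subset \cdom T^*$, taking complements gives $\sM^\perp = (\cdom T^*) \cap \sL^\perp = \sL^\perp \cap \cdom T^*$, using $\mul T^{**}{}^\perp = \cdom T^*$. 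Then the requirement $\clos(\sM^\perp \cap \dom T^*) = \sM^\perp$ becomes $\clos(\sL^\perp \cap \cdom T^* \cap \dom T^*) = \sL^\perp \cap \cdom T^*$; since $\dom T^* \subset \cdom T^*$ the middle intersection simplifies to $\sL^\perp \cap \dom T^*$, yielding exactly the first condition in \eqref{Lleb}. I would present these two equivalences as a matched pair, taking care with the complements relative to $\sK$ versus relative to $\cdom T^*$.

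**The singular part and the converse.** For the formula \eqref{Llebsing1} I would simply substitute \eqref{L} into the expression \eqref{Mleb22} (equivalently, statement (ii) of Theorem~\ref{Lebtype}): the factor $\sM \cap (\sM \cap \dom T^*)^\perp$ becomes $(\mul T^{**} \oplus \sL) \cap (\sL \cap \dom T^*)^\perp$ once one observes that $\sM \cap \dom T^* = \sL \cap \dom T^*$ (again because $\mul T^{**} \cap \dom T^* = \{0\}$). The converse direction is then purely formal: starting from a closed $\sL \subset \cdom T^*$ satisfying \eqref{Lleb}, I define $\sM$ by \eqref{L}, note it is closed as an orthogonal sum of closed subspaces, and run the equivalences backwards to recover \eqref{Mleb}.

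**Anticipated obstacle.** The routine part is the algebra of intersections and complements; the one step demanding genuine care is the first equivalence, where one must be scrupulous about whether complements are taken in $\sK$ or inside $\cdom T^*$, and must invoke $\dom T^* \subset \cdom T^*$ at the right moment to collapse $\sL^\perp \cap \cdom T^* \cap \dom T^*$ to $\sL^\perp \cap \dom T^*$ without losing the $\cdom T^*$ factor on the right-hand side. I expect the verification that $\mul T^{**} \subset \sM$ (needed before \eqref{L} even makes sense as an orthogonal sum with $\sL \subset \cdom T^*$) to be the genuinely load-bearing observation, and I would make sure it is justified from the second condition in \eqref{Mleb} rather than assumed.
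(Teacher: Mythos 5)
Your overall architecture matches the paper's proof (split $\sM=\mul T^{**}\oplus\sL$ with $\sL=\sM\cap\cdom T^*$, compute $\sM^\perp=\sL^\perp\cap\cdom T^*$, translate the two conditions, substitute into Theorem \ref{Lebtype}(ii) to get \eqref{Llebsing1}, and reverse the steps for the converse), but your self-declared load-bearing first step is wrong. You insist on deriving the inclusion $\mul T^{**}\subset\sM$ from the \emph{second} condition in \eqref{Mleb}, namely $\sM\cap\dom T^*\subset\ker T^*$. That implication is false: if $T$ is not regular (so $\mul T^{**}\neq\{0\}$) and $\sM=\{0\}$, then $\sM\cap\dom T^*=\{0\}\subset\ker T^*$ holds trivially, yet $\mul T^{**}\not\subset\sM$. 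The inclusion in fact comes from the \emph{first} condition: $\clos(\sM^\perp\cap\dom T^*)=\sM^\perp$ forces $\sM^\perp\subset\cdom T^*$, and taking orthogonal complements gives $\mul T^{**}=(\cdom T^*)^\perp\subset\sM$. This is exactly Lemma \ref{Lebtypelem}(ii) (see \eqref{Mleb11}), which is how the paper justifies the step; your remark that ``$P_\sM$ restricted to $\mul T^{**}$ behaves correctly'' is not an argument, and no argument from the second condition alone can exist.

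A secondary error: to reduce $\sM\cap\dom T^*\subset\ker T^*$ to $\sL\cap\dom T^*\subset\ker T^*$ you invoke the inclusion $\mul T^{**}\subset\ker T^*$. But $\ker T^*\subset\dom T^*$ while $\mul T^{**}=(\dom T^*)^\perp$, so $\mul T^{**}\cap\ker T^*=\{0\}$ and the inclusion you cite holds only when $\mul T^{**}=\{0\}$. Fortunately it is not needed: what one must show is the identity $\sM\cap\dom T^*=\sL\cap\dom T^*$, and this follows from orthogonality rather than from trivial intersection. Indeed, since $\dom T^*\subset\cdom T^*=(\mul T^{**})^\perp$ and $\sL\subset\cdom T^*$, any $x=m+l\in(\mul T^{**}\oplus\sL)\cap\dom T^*$ satisfies $m=x-l\in\mul T^{**}\cap\cdom T^*=\{0\}$; note that the fact $\mul T^{**}\cap\dom T^*=\{0\}$ by itself would not justify this, since a direct sum can meet a subspace that intersects each summand trivially. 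With these two repairs---deriving $\mul T^{**}\subset\sM$ from the first condition of \eqref{Mleb}, and proving $\sM\cap\dom T^*=\sL\cap\dom T^*$ by the orthogonality argument---the remainder of your proposal (the computation of $\sM^\perp$, the translation into \eqref{Lleb}, the substitution giving \eqref{Llebsing1}, and the formal converse) is correct and coincides with the paper's argument.
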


\begin{proof}
Assume that \eqref{Mleb} is satisfied, then $\mul T^{**} \subset
\sM$ by Lemma \ref{Lebtypelem}. Hence there exists a closed linear
subspace $\sL \subset \cdom T^{*}$, such that \eqref{L} holds. Note
that then one has
\[
 \sM^\perp=\cdom T^* \cap \sL^\perp, \quad
 \dom T^* \cap \sM^\perp=\dom T^* \cap \sL^\perp.
\]
Therefore, the first identity in \eqref{Mleb} is satisfied if and
only if the closed linear subspace $\sL \subset \cdom T^{*}$
satisfies
\begin{equation}\label{uks}
\clos( \sL^\perp \cap \dom T^*) = \sL^\perp \cap \cdom T^*.
\end{equation}
Notice also that with \eqref{L} one has
\[
\sM \cap \dom T^{*} =(\mul T^{**} \oplus \sL) \cap \dom T^{*}
=\sL \cap \dom T^{*},
\]
so that the second identity in \eqref{Mleb} is equivalent to
\begin{equation}\label{kaks}
\sL \cap \dom T^{*} \subset \ker T^{*}.
\end{equation}
The formula \eqref{Llebsing1} is clear from item (ii) in Theorem
\ref{Lebtype}.
\end{proof}

The notation $\sM=\mul T^{**} \oplus \sL$ where $\sL$ is a closed
linear subspace of $\cdom T^{*}$ will be used throughout this paper.
Here the choice $\sL=\{0\}$ produces the Lebesgue decomposition of
$T$ in Theorem \ref{jor0}. In general, the choice of the subspace
$\sM$ or $\sL$ in Theorem \ref{Lebtype} and Lemma \ref{corl} to
derive a Lebesgue type decomposition $T=T_1+T_2$ via \eqref{lebdeco}
is not unique. For instance, for any closed linear subspace $\sL
\subset \ker T^{*}$ one can easily check that
\begin{equation}\label{lt1}
 (I-P_{\sM})T=T_{\rm reg}, \quad P_{\sM}\,T=T_{\rm sing},
\end{equation}
i.e. all closed linear subspaces $\sL  \subset \ker T^{*}$ produce the Lebesgue
decomposition of $T$. There is a further restriction in the
conditions \eqref{Mleb} or, equivalently, in the conditions \eqref{Lleb},
which makes the choice of $\sM$ and $\sL$ unique for
each Lebesgue type decomposition and leads to a one-to-one
correspondence between all Lebesgue type decomposition of $T$ and
the subspaces $\sM=\mul T^{**} \oplus \sL$.

\begin{theorem}\label{Lebtype2}
Let $T$ be a relation from the Hilbert space $\sH$ to the Hilbert
space $\sK$. Then the Lebesgue type decompositions $T=T_1+T_2$ of
$T$ are in one-to-one correspondence via $\sM=\cran T_2=\mul T^{**}
\oplus \sL$ with the closed linear subspaces $\sL \subset \cdom T^*
\setminus \dom T^{*}$ which satisfy the condition
\begin{equation}\label{Lleb2}
\clos(\sL^\perp \cap \dom T^*) =\sL^\perp \cap \cdom T^*.
\end{equation}
In particular, in this case
\begin{equation}\label{Lleb2+}
(T_2)^{**} =\cdom T^{**}\times (\mul T^{**} \oplus \sL).
\end{equation}
\end{theorem}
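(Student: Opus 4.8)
The plan is to upgrade the two-way correspondence of Theorem~\ref{Lebtype} and Lemma~\ref{corl} into an actual bijection by determining precisely which subspaces $\sL$ occur. The assignment sending a Lebesgue type decomposition $T=T_1+T_2$ to $\sM=\cran T_2$ is already injective, since by Theorem~\ref{Lebtype} the subspace $\sM$ regenerates the decomposition through \eqref{lebdeco} via $T_2=P_{\sM}T$; hence the real task is to characterize the range of this assignment.

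First I would compute the closed range of the singular summand. Because $T_2=P_{\sM}T$ is singular, Proposition~\ref{Tsinglemma0} gives $\ran (T_2)^{**}=\mul (T_2)^{**}$, and combined with the general identity $\cran T_2=\cran (T_2)^{**}$ (a consequence of the listed equalities $(\ran S)^\perp=\ker S^*=(\ran S^{**})^\perp$) this yields $\cran T_2=\mul (T_2)^{**}$. Inserting the explicit form \eqref{Llebsing1} of $(P_{\sM}T)^{**}$, and using $\sM\cap\dom T^*=\sL\cap\dom T^*$ from the proof of Lemma~\ref{corl}, produces the key identity
\[
 \cran T_2=\sM\cap(\sL\cap\dom T^*)^\perp.
\]

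The decisive step is then a self-consistency argument. For any Lebesgue type decomposition one has $\sM=\cran T_2$ by definition, so the displayed identity forces $\sM\subset(\sL\cap\dom T^*)^\perp$; since $\sL\cap\dom T^*\subset\sL\subset\sM$, this is possible only if $\sL\cap\dom T^*=\{0\}$, that is, $\sL\subset\cdom T^*\setminus\dom T^*$ in the stated sense. Under this constraint the second requirement in \eqref{Lleb} holds trivially, so by Lemma~\ref{corl} the conditions on $\sL$ reduce to the single identity \eqref{Lleb2}. Conversely, given any closed $\sL\subset\cdom T^*$ with $\sL\cap\dom T^*=\{0\}$ satisfying \eqref{Lleb2}, the subspace $\sM=\mul T^{**}\oplus\sL$ fulfils \eqref{Mleb} by Lemma~\ref{corl}, so \eqref{lebdeco} yields a Lebesgue type decomposition, and the displayed identity now gives $\cran T_2=\sM\cap\sK=\sM$; thus the two assignments are mutually inverse and the correspondence is bijective. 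The supplementary formula \eqref{Lleb2+} drops out by the same substitution: with $\sL\cap\dom T^*=\{0\}$ the factor $(\sL\cap\dom T^*)^\perp$ in \eqref{Llebsing1} equals $\sK$, collapsing that formula to $(T_2)^{**}=\cdom T^{**}\times(\mul T^{**}\oplus\sL)$.

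The main obstacle I anticipate is assembling the range computation correctly---in particular justifying $\cran T_2=\mul (T_2)^{**}$ for the singular relation $T_2$ and the reduction $\sM\cap\dom T^*=\sL\cap\dom T^*$---after which the entire bijection follows from the single orthogonality equation $\cran T_2=\sM$.
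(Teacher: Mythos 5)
Your proposal is correct, and it follows the paper's own skeleton: both arguments reduce the problem to the parametrization supplied by Theorem~\ref{Lebtype} and Lemma~\ref{corl}, then show that the normalization $\sM=\cran T_2$ forces $\sL\cap\dom T^{*}=\{0\}$, and handle the converse direction and the formula \eqref{Lleb2+} in the same way. The one place where you genuinely diverge is the crucial step. The paper argues elementwise: for $g\in\sL\cap\dom T^{*}$ the condition \eqref{Lleb} gives $g\in\ker T^{*}$, so $g\perp\ran T$, hence $g=P_{\sM}g$ is orthogonal to $P_{\sM}\,\ran T=\ran T_{2}$, and $g\in\cran T_{2}$ then forces $g=0$. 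You instead compute the closed range globally, namely $\cran T_{2}=\mul (T_{2})^{**}=\sM\cap(\sL\cap\dom T^{*})^{\perp}$, using Proposition~\ref{Tsinglemma0} together with $\cran T_{2}=\cran (T_{2})^{**}$ and the formula \eqref{Llebsing1}, and then impose the self-consistency requirement $\sM=\cran T_{2}$. The two arguments encode exactly the same orthogonality fact, so neither is stronger; yours reuses the already-derived formula \eqref{Llebsing1} more systematically and has the added benefit of making the mutual-inverse property explicit in the converse direction (where $\sL\cap\dom T^{*}=\{0\}$ yields $\cran T_{2}=\sM\cap\sK=\sM$, so the generated decomposition really maps back to the same $\sL$ --- a point the paper's proof leaves somewhat implicit), while the paper's vector argument is more elementary and does not need the identity $\cran T_{2}=\cran (T_{2})^{**}$.
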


\begin{proof}
Assume that the relation $T$ from $\sH$ to $\sK$ has a Lebesgue type
decomposition of the form
\[
 T=T_1+T_2,
\]
where $T_{1}$ and $T_{2}$ are relations from $\sH$ to $\sK$
such that
\[
 \dom T_{1}=\dom T_{2}=\dom T,
 \quad \ran T_{1} \perp \ran T_{2},
\]
and $T_{1}$ is regular while $T_{2}$ is singular. Define $\sM=\cran
T_2$, so that $\sM$ is a closed linear subspace of $\sK$, and let
$P_\sM$  be the orthogonal projection onto $\cran T_2$. Then by
Lemma \ref{osd}
\[
T_1=(I-P_{\sM})T, \quad T_2=P_{\sM}T.
\]
Moreover, Lemma \ref{corl} shows that $\sM$ is of the form $\sM=\mul
T^{**} \oplus \sL$, where $\sL$ is a closed linear subspace of
$\cdom T^{*}$ satisfying \eqref{Lleb}.

It will be shown that with the choice $\sM=\cran T_2$ the last
condition in \eqref{Lleb}, namely $\sL \cap \dom T^{*} \subset \ker
T^{*}$, actually reduces to $\sL \cap \dom T^{*}=\{0\}$. For this
purpose assume that $g \in \sL \cap \dom T^{*}$. Then $g \in \ker
T^{*}$ and $g \in \sL \subset \sM$. Hence $g=P_{\sM}g$ is orthogonal
to $\ran T$ and it follows that $g$ is orthogonal to $P_{\sM} \,\ran
T=\ran P_{\sM}T=\ran T_{2}$. Since $\sM=\cran T_{2}$, it follows
that $g=0$. Thus $\sL$ satisfies $\sL \subset \cdom
T^{**}\setminus\dom T^{**}$ and \eqref{Lleb2}.

It is clear from Theorem \ref{Lebtype} and Lemma \ref{corl} that the
converse holds. In other words, every linear subspace $\sL \subset
\cdom T^{*} \setminus \dom T^{*}$ which satisfies \eqref{Lleb2}
gives rise to a Lebesgue type decomposition of $T$.

The formula \eqref{Lleb2+} is immediate from \eqref{Llebsing1} in
Lemma \ref{corl}. Since
\[
 \sM=\cran T_2=\mul (T_2)^{**},
\]
it follows from \eqref{Lleb2+} that the choice of
$\sL \subset \cdom T^{*} \setminus \dom T^{*}$ uniquely determines
$\sM(=\mul (T_2)^{**})$ and $T_2=P_{\sM}T$ and, conversely, the choice of $T_2$
uniquely determines $\mul (T_2)^{**}$ and $\sL$. Hence, the
correspondence between $T_2$ and $\sL$ is one-to-one.
\end{proof}

Notice that the domain and range of a closed relation $T$ are
operator ranges; as shown in \eqref{ToperRange} this remains true
even for $T$ which is itself an operator range. In the case that
$\dom T^*$ is not closed one necessarily has $\dim (\cdom T^*/\dom
T^*)=\infty$; see e.g. \cite[Theorem~2.3]{FW}. In this case there
are a lot of closed subspaces $\sL\subset \cdom T^*\setminus\dom
T^*$. If, for instance, $e \in \cdom T^* \setminus \dom T^{*}$ then
also $e+f \in \cdom T^* \setminus \dom T^{*}$ for any $f\in \dom
T^*$. Moreover, each of the $1$-dimensional subspaces
$\sL=\spn\{e+f\}$ satisfies also the condition \eqref{Lleb2} in
Theorem \ref{Lebtype2}.

More generally the following result holds; for the proof just apply
Lemma \ref{nneeww} with $\sM=\cdom T^{*}$ closed and $\sR=\dom
T^{*}$, which is an operator range:

\begin{lemma}\label{SA}
Let $T$ be a linear relation from the Hilbert space $\sH$ to the
Hilbert space $\sK$ and assume that $\dom T^*$ is not closed. Let
$\sL\subset \cdom T^* \setminus \dom T^{*}$ be a closed subspace and
let $P_\sL$ be the orthogonal projection from $\sH$ onto $\sL$. Then
the condition
\begin{equation}\label{PdomT*}
 P_\sL(\dom T^*)=\sL
\end{equation}
implies the condition \eqref{Lleb2} in Theorem \ref{Lebtype2}. Thus,
any closed subspace $\sL\subset \cdom T^* \setminus \dom T^{*}$
satisfying \eqref{PdomT*} gives rise to a Lebesgue type
decomposition which is different from the Lebesgue decomposition of
$T$.

In particular, the condition \eqref{PdomT*} holds if $\dim
\sL<\infty$ and thus there are infinitely many different Lebesgue
type decompositions for $T$ whenever $\dom T^*$ is not closed.
\end{lemma}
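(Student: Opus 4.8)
The plan is to prove Lemma~\ref{SA} by directly transporting the denseness result of Lemma~\ref{nneeww} into the situation at hand. The key observation is that the hypotheses of Lemma~\ref{nneeww} are tailor-made for this setting: I take $\sM=\cdom T^*$, which is a closed subspace of $\sK$, and $\sR=\dom T^*$, which is dense in $\cdom T^*$ by definition and is an operator range (by \eqref{adjoo} the adjoint $T^*$ is a closed relation, so its domain is an operator range; cf.\ the discussion following \eqref{ToperRange}). With $\sL$ playing the role of the closed subspace in Lemma~\ref{nneeww}, the hypothesis \eqref{PdomT*}, namely $P_\sL(\dom T^*)=\sL$, is precisely the hypothesis \eqref{PdomT*0} of that lemma. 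Hence Lemma~\ref{nneeww} applies and yields \eqref{PdomT*0+}:
\begin{equation*}
 \cdom T^* \cap \ker P_\sL = \clos\big( \dom T^* \cap \ker P_\sL \big).
\end{equation*}

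First I would rewrite this conclusion in the language of \eqref{Lleb2}. Since $\sL$ is the range of the projection $P_\sL$, one has $\ker P_\sL = \sL^\perp$, so the identity above reads $\cdom T^* \cap \sL^\perp = \clos(\dom T^* \cap \sL^\perp)$, that is,
\begin{equation*}
 \clos(\sL^\perp \cap \dom T^*) = \sL^\perp \cap \cdom T^*,
\end{equation*}
which is exactly condition \eqref{Lleb2}. By Theorem~\ref{Lebtype2} every closed subspace $\sL\subset\cdom T^*\setminus\dom T^*$ satisfying \eqref{Lleb2} corresponds to a genuine Lebesgue type decomposition, and since $\sL\not\subset\dom T^*$ (indeed $\sL\cap\dom T^*=\{0\}$ is forced for such $\sL$, and $\sL\neq\{0\}$), the decomposition it produces is distinct from the Lebesgue decomposition, for which $\sL=\{0\}$; cf.\ \eqref{lt1}.

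For the final assertion I would invoke the second part of Lemma~\ref{nneeww}: whenever $\dim\sL<\infty$, every dense subspace $\sR$ of $\sM=\cdom T^*$ for which $P_\sL\,\sR$ is dense in $\sL$ automatically satisfies \eqref{PdomT*0}. Here $\dom T^*$ is dense in $\cdom T^*$, and since $\sL\subset\cdom T^*$ one has that $P_\sL(\dom T^*)$ is dense in $P_\sL(\cdom T^*)=\sL$; thus \eqref{PdomT*} holds for \emph{any} finite-dimensional $\sL$. It then remains only to produce infinitely many distinct such $\sL$. Since $\dom T^*$ is assumed not closed, the quotient $\cdom T^*/\dom T^*$ is infinite-dimensional (see e.g.\ \cite[Theorem~2.3]{FW}), so as already noted in the paragraph preceding this lemma one can pick $e\in\cdom T^*\setminus\dom T^*$ and form the infinitely many distinct one-dimensional subspaces $\sL=\spn\{e+f\}$, $f\in\dom T^*$, each lying in $\cdom T^*\setminus\dom T^*$ and each giving a different $T_2=P_\sM T$.

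The main obstacle, and the only point requiring genuine care, is the verification that $\dom T^*$ qualifies as an operator range so that Lemma~\ref{nneeww} is applicable; everything downstream is a translation of notation ($\ker P_\sL=\sL^\perp$) and a citation of the already-established Theorem~\ref{Lebtype2}. This obstacle is mild, since $T^*$ is closed and the domain of any closed relation is an operator range. I therefore expect the proof to be short, essentially the single sentence of justification the authors hint at (``for the proof just apply Lemma~\ref{nneeww} with $\sM=\cdom T^*$ closed and $\sR=\dom T^*$''), with the bookkeeping between conditions \eqref{PdomT*}/\eqref{PdomT*0} and \eqref{Lleb2}/\eqref{PdomT*0+} being the substance.
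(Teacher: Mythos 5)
Your proposal is correct and takes essentially the same approach as the paper: the paper's entire proof consists of the instruction to apply Lemma~\ref{nneeww} with $\sM=\cdom T^{*}$ and $\sR=\dom T^{*}$, the latter being an operator range because $T^{*}$ is closed. Your additional bookkeeping (the identification $\ker P_\sL=\sL^{\perp}$ turning \eqref{PdomT*0+} into \eqref{Lleb2}, the appeal to the one-to-one correspondence of Theorem~\ref{Lebtype2} for distinctness, and the subspaces $\spn\{e+f\}$ for the infinitude claim) simply fills in what the paper delegates to Theorem~\ref{Lebtype2} and to the paragraph immediately preceding the lemma.
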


Assume in Theorem \ref{Lebtype2} that $T$ is a closable linear
operator. Then by Corollary \ref{CorRegSing} one sees that $T=T_{\rm
reg}$ and $T_{\rm sing}=0$. Hence in this case the Lebesgue
decomposition of $T$ is trivial.  The non-trivial Lebesgue type
decompositions $T=T_1+T_2$ of $T$ are in one-to-one correspondence
via $\sM=\cran T_2=\sL$ with the non-trivial closed linear subspaces
$\sL \subset \cdom T^* \setminus \dom T^{*}$ which satisfy the
condition \eqref{Lleb2}; in this case $\mul (T_2)^{**} =\cdom
T^{**}\times \sL$ . Hence as long as $\dom T^*$ is not closed,
\textit{any} closable operator $T$ has infinitely many non-trivial
Lebesgue type
decompositions, while its Lebesgue decomposition is completely trivial. \\

\section{The uniqueness of Lebesgue type decompositions}\label{sec6}

Let $T$ be a linear relation from the Hilbert space $\sH$ to the
Hilbert space $\sK$. It is a consequence of Lemma \ref{SA} that when
$\dom T^{*}$ is not closed there is no uniqueness: there exist
Lebesgue type decompositions of $T$ which are different from the
Lebesgue decomposition. In the case where  $\dom T^{*}$ is closed
there is a completely different behavior.

\begin{theorem}\label{uninew}
Let $T$  be a linear relation from a Hilbert space
$\sH$ to a Hilbert space $\sK$.
Then the following statements are equivalent:
\begin{enumerate}[ $(\rm i)$]
\item
the Lebesgue decomposition is the only Lebesgue
type decomposition of $T$;
\item $\dom T^*$ is closed;
\item $T_{\rm reg}$ is bounded.
\end{enumerate}
\end{theorem}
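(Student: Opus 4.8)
The plan is to prove the equivalence by establishing a cycle of implications, exploiting the parametrization already obtained in Theorem~\ref{Lebtype2} and the characterizations of regularity in Proposition~\ref{stoch0}.

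For the implication (ii)~$\Rightarrow$~(i), I would argue by contrapositive using Theorem~\ref{Lebtype2}: if the Lebesgue decomposition were not the only Lebesgue type decomposition, then by the one-to-one correspondence there would exist a \emph{nontrivial} closed subspace $\sL \subset \cdom T^* \setminus \dom T^*$ satisfying \eqref{Lleb2}. But the existence of a nonzero vector in $\cdom T^* \setminus \dom T^*$ immediately forces $\dom T^*$ to be a proper subset of its closure, i.e., $\dom T^*$ is not closed. Contrapositively, if $\dom T^*$ is closed then $\cdom T^* = \dom T^*$, so the set $\cdom T^* \setminus \dom T^*$ is empty and the only admissible subspace is $\sL = \{0\}$, which by the remark following Lemma~\ref{corl} produces precisely the Lebesgue decomposition. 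Hence uniqueness holds.

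For (i)~$\Rightarrow$~(ii) I would again use the contrapositive, which is exactly the content of Lemma~\ref{SA}: if $\dom T^*$ is not closed, then $\cdom T^*\setminus \dom T^*$ contains nonzero vectors, and any finite-dimensional (say one-dimensional) subspace $\sL$ spanned by such a vector automatically satisfies \eqref{PdomT*} and hence \eqref{Lleb2}, producing a Lebesgue type decomposition different from the Lebesgue decomposition. This destroys uniqueness, giving the contrapositive of (i)~$\Rightarrow$~(ii). So these first two implications are essentially bookkeeping on top of the machinery already in place.

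For the equivalence of (iii) with the other two, I would route through the space $\cdom T^*$ versus $\dom T^*$. By Proposition~\ref{stoch0}, regularity of $T$ means $\cdom T^* = \sK$, while strong regularity (boundedness of $T^{**}$, equivalently of $T_{\rm reg}$ after passing to its closure) corresponds to $\dom T^* = \sK$ in the fully defined case; more precisely the relevant dichotomy is between $\dom T^*$ and its closure. The cleanest path is to show (ii)~$\Leftrightarrow$~(iii) directly: the regular part $T_{\rm reg}=(I-P)T$ has adjoint $T^*(I-P)$ whose domain is governed by $\dom T^* \cap \cdom T^*$, and $T_{\rm reg}$ is bounded precisely when this domain is all of $\cdom T^*$, i.e. when $\dom T^* \supset \cdom T^*$, which is exactly closedness of $\dom T^*$. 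I expect the main obstacle to lie here: one must carefully relate boundedness of $T_{\rm reg}$ (which lives as a densely defined operator into $\cdom T^*$) to the closedness of $\dom T^*$ in $\sK$, invoking the closed graph theorem via Proposition~\ref{stoch0}~(iv)--(vi) applied to $T_{\rm reg}$ rather than to $T$ itself, and checking that $\dom (T_{\rm reg})^* = \dom T^*$ through the identity $(T_{\rm reg})^* = T^*(I-P)$ together with the decomposition $\sK = \cdom T^* \oplus \mul T^{**}$.
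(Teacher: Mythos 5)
Your proposal is correct. For (i) $\Leftrightarrow$ (ii) it coincides with the paper's argument: one direction is the contrapositive supplied by Lemma \ref{SA}, and the other reads off from the parametrization in Theorem \ref{Lebtype2} that closedness of $\dom T^*$ forces $\sL=\{0\}$, hence the decomposition is the Lebesgue decomposition. Where you genuinely diverge is in (ii) $\Leftrightarrow$ (iii). The paper goes through the \emph{second} adjoint: by Theorem \ref{jor} one has $\dom (T_{\rm reg})^{**}=\dom (T^{**})_{\rm reg}=\dom T^{**}$, so the closable operator $T_{\rm reg}$ is bounded exactly when $\dom T^{**}$ is closed, which by the simultaneous-closedness principle of Section \ref{sec2.2} happens exactly when $\dom T^*$ is closed. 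You instead go through the \emph{first} adjoint: $(T_{\rm reg})^*=T^*(I-P)$, whose domain is computed by Lemma \ref{PT}, and then Proposition \ref{stoch0} (iv) $\Leftrightarrow$ (vi) applied to $T_{\rm reg}$ identifies boundedness with $\dom (T_{\rm reg})^*=\sK$. Both routes are sound and of comparable length; yours avoids Theorem \ref{jor} entirely, needing only Lemma \ref{PT} and Proposition \ref{stoch0}, while the paper's route produces the identity \eqref{Tregbounded1}, which is useful elsewhere. One correction to your final sentence: the identity to verify is not $\dom (T_{\rm reg})^*=\dom T^*$, which fails whenever $\mul T^{**}\neq\{0\}$, but rather, by \eqref{PT0},
\[
\dom (T_{\rm reg})^* = \mul T^{**}\oplus\bigl(\dom T^*\cap\cdom T^*\bigr)=\mul T^{**}\oplus\dom T^*,
\]
so that, compared against $\sK=\mul T^{**}\oplus\cdom T^*$, the equality $\dom (T_{\rm reg})^*=\sK$ holds if and only if $\dom T^*=\cdom T^*$. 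This is exactly what your earlier formulation (``bounded iff $\dom T^*\supset\cdom T^*$'') asserts, and with that fix the argument is complete.
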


\begin{proof}
(i) $\Rightarrow$ (ii) If $\dom T^*$ is not closed, it follows from
Lemma \ref{SA} that there are nonzero finite-dimensional subspaces
$\sL\subset \cdom T^*\setminus\dom T^*$ inducing Lebesgue type
decompositions for $T$ which are different from its Lebesgue decomposition.

(ii) $\Rightarrow$ (i) Let $T=(I-P_{\sM})T+P_{\sM}T$ be a Lebesgue
type decomposition of $T$ generated by some closed linear subspace
$\sM\subset \sK$. According to Theorem \ref{Lebtype2} one can take
$\sM=\mul T^{**}\oplus \sL$, where $\sL\subset \cdom T^*\setminus
\dom T^*$. However, if $\dom T^*$ is closed this means that
$\sL=\{0\}$. Therefore the Lebesgue type decomposition of $T$
coincides with the Lebesgue decomposition of $T$.

(ii) $\Leftrightarrow$ (iii) It follows from Theorem \ref{jor} that
\begin{equation}\label{Tregbounded1}
 \dom T^{**}=\dom (T^{**})_{\rm reg}=\dom (T_{\rm reg})^{**}.
\end{equation}
This shows that $T_{\rm reg}$ as a closable operator is bounded
precisely when $\dom T^{**}$ is closed or, equivalently, when $\dom T^*$
is closed.
\end{proof}

\begin{corollary}\label{grijsz}
Let $T$ be a closable linear operator from the Hilbert space $\sH$
to the Hilbert space $\sK$. Then each of the statements (i)--(iii)
in Theorem \ref{uninew} is equivalent to $T$ being bounded.
\end{corollary}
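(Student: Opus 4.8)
The plan is to reduce Corollary \ref{grijsz} to Theorem \ref{uninew} by observing that for a closable operator the regular part is trivial. Since $T$ is a closable operator, Corollary \ref{CorRegSing} gives $T=T_{\rm reg}$ and $T_{\rm sing}=0$. Hence the statement ``$T_{\rm reg}$ is bounded'' in item (iii) of Theorem \ref{uninew} becomes simply ``$T$ is bounded.'' This is the heart of the matter: once the identification $T_{\rm reg}=T$ is in place, the equivalence of (i)--(iii) in Theorem \ref{uninew} immediately yields that each of those conditions is equivalent to $T$ being bounded.

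More precisely, I would first record that $T$ being a closable operator means $\mul T^{**}=\{0\}$, so the projection $P$ onto $\mul T^{**}$ is zero and therefore $T_{\rm reg}=(I-P)T=T$ and $T_{\rm sing}=PT=0$, exactly as in \eqref{regu}. Then I would invoke Theorem \ref{uninew}, whose three conditions (i)--(iii) are already known to be mutually equivalent. Substituting $T_{\rm reg}=T$ into condition (iii) shows that (iii) reads precisely ``$T$ is bounded.'' Consequently each of (i), (ii), (iii) is equivalent to the boundedness of $T$, which is the desired conclusion.

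I do not anticipate a genuine obstacle here, since essentially all the work has been done in Theorem \ref{uninew}; the only point requiring care is making explicit that closability of the \emph{operator} $T$ forces $\mul T^{**}=\{0\}$ and hence $T_{\rm reg}=T$. One should be mildly attentive to the distinction between $T$ and its closure $T^{**}$: boundedness of the closable operator $T$ is equivalent to boundedness (and then closedness of the domain) of its closure $T^{**}=(T_{\rm reg})^{**}$, which is exactly the content tied to condition (ii) via \eqref{Tregbounded1}. Thus the corollary is a direct specialization of the theorem to the closable-operator case, and the proof is a short two- or three-line deduction rather than a new argument.
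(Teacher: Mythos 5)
Your proposal is correct and is precisely the argument the paper leaves implicit: closability of the operator $T$ means $\mul T^{**}=\{0\}$, so $P=0$ and $T_{\rm reg}=T$ by Corollary \ref{CorRegSing}, whence condition (iii) of Theorem \ref{uninew} reads ``$T$ is bounded'' and the equivalences follow. Nothing is missing; this is the intended direct specialization of Theorem \ref{uninew}.
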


Notice that the parametrization of the Lebesgue type decompositions
in Theorem \ref{Lebtype2} is being used in order to show the
uniqueness result in Theorem \ref{uninew}. Each of the items (i) and
(iii) is equivalent to the condition that $\dom T^{*}$  is closed in
item (ii). In the context of pairs of nonnegative bounded operators
a uniqueness result for the corresponding Lebesgue type
decompositions was obtained by Ando \cite{An}. Ando's proof was
based on arguments to derive the equivalence of (i) and (iii)
directly without the analysis via Lebesque type decompositions.

\begin{theorem}\label{ExtUnirep}
Let $T$  be a linear relation from a Hilbert space $\sH$ to a
Hilbert space $\sK$. The Lebesgue decomposition of $T$ is the only
Lebesgue type decomposition $T=T_1+T_2$ of $T$ whose regular part
$T_1$ has the property
\begin{equation}\label{char}
 T_1 \subset T^{**}.
\end{equation}
Furthermore, if the regular part $T_1$ of a Lebesgue type
decomposition $T=T_1+T_2$ has the property
\begin{equation}\label{charchar}
T_1 \quad \mbox{ is bounded,}
\end{equation}
then it coincides with the Lebesgue decomposition of $T$. In
particular, if $T_1\neq T_{\rm reg}$, then $T_1$ is an unbounded
closable operator.
\end{theorem}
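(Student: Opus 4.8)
The plan is to prove Theorem \ref{ExtUnirep} by exploiting the parametrization established in Theorem \ref{Lebtype2}, which puts Lebesgue type decompositions $T=T_1+T_2$ in one-to-one correspondence with closed subspaces $\sL\subset\cdom T^*\setminus\dom T^*$ via $\sM=\cran T_2=\mul T^{**}\oplus\sL$, together with $T_1=(I-P_\sM)T$ and $T_2=P_\sM T$. The goal in each of the two parts is to show that the stated hypothesis on $T_1$ forces $\sL=\{0\}$, which by the parametrization means $\sM=\mul T^{**}$, i.e. $P_\sM=P$ and hence $T_1=T_{\rm reg}$, $T_2=T_{\rm sing}$: the Lebesgue decomposition.

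For the first statement I would assume $T_1=(I-P_\sM)T\subset T^{**}$ and aim to derive $\sL\subset\ker T^*$, because a closed subspace $\sL\subset\cdom T^*\setminus\dom T^*$ cannot be contained in $\ker T^*\subset\dom T^*$ unless $\sL=\{0\}$. The key computation is to take $\{f,f'\}\in T$ and write $\{f,(I-P_\sM)f'\}=\{f,f'\}-\{0,P_\sM f'\}$; since $\{f,f'\}\in T^{**}$ and, by hypothesis, $\{f,(I-P_\sM)f'\}\in T^{**}$, subtracting gives $\{0,P_\sM f'\}\in T^{**}$, so that $P_\sM f'\in\mul T^{**}$. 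As $f'$ ranges over $\ran T$ this shows $P_\sM(\ran T)\subset\mul T^{**}$; taking closures and using $\cran T_2=\cran P_\sM T=P_\sM(\cran T)$ (dense image) together with $\sM=\cran T_2$ yields $\sM\subset\mul T^{**}$. Combined with $\mul T^{**}\subset\sM$ from Lemma \ref{Lebtypelem}, this forces $\sM=\mul T^{**}$, hence $\sL=\{0\}$, as desired. I expect the main subtlety here to be handling the closures carefully: one must make sure the inclusion $P_\sM(\ran T)\subset\mul T^{**}$ survives passage to the closure defining $\sM=\cran T_2$, which is where the orthogonal range structure of the decomposition is essential.

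For the second statement, suppose the regular part $T_1$ of a Lebesgue type decomposition is bounded. The cleanest route is to observe that boundedness of $T_1$ makes $\dom(T_1)^{**}=\cdom T_1=\cdom T^{**}$ closed; I would then relate $\dom T_1=\dom T$ to $\dom T^{**}$ and invoke Theorem \ref{uninew}. More directly, since $T_1$ is bounded and regular with $\dom T_1=\dom T$, its closure $(T_1)^{**}$ is an everywhere-defined-on-$\cdom T$ bounded operator, which forces $\dom T^{**}$ to be closed; by the equivalence (ii)$\Leftrightarrow$(iii) of Theorem \ref{uninew} (through $\dom T^*$ closed), the Lebesgue decomposition is then the only Lebesgue type decomposition, so $T=T_1+T_2$ must coincide with it. The final sentence, that $T_1\neq T_{\rm reg}$ forces $T_1$ to be unbounded, is then the contrapositive of what was just proved, combined with the fact that any regular $T_1$ is in particular a closable operator.

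The step I anticipate as the main obstacle is the closure argument in the first part — specifically justifying $\sM=\cran T_2\subset\mul T^{**}$ from the pointwise inclusion $P_\sM f'\in\mul T^{**}$ for $f'\in\ran T$. The reason this requires care is that $\ran T$ need not be closed and $\cran T_2$ is defined as the closure of $P_\sM(\ran T)$, so one is asserting that a closed target subspace ($\mul T^{**}$) absorbs the closure of the projected range; this is legitimate precisely because $\mul T^{**}$ is closed, but it must be stated explicitly rather than glossed over. Everything else reduces to the already-proven parametrization and the dictionary between $\sL=\{0\}$ and the Lebesgue decomposition.
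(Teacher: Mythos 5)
Your proof of the first statement is correct and is essentially the paper's own argument: from $T_1\subset T^{**}$ you subtract to get $\{0,P_{\sM}f'\}\in T^{**}$, hence $\ran T_2=P_{\sM}\,\ran T\subset \mul T^{**}$, and since $\mul T^{**}$ is closed this inclusion passes to $\sM=\cran T_2$; combined with $\mul T^{**}\subset \sM$ this forces $\sL=\{0\}$, i.e.\ the Lebesgue decomposition. (Your intermediate identity $\cran T_2=P_{\sM}(\cran T)$ is not literally correct, since the projection of a closed subspace need not be closed, but it is also not needed: all you use is $\cran T_2=\clos(P_{\sM}\,\ran T)\subset \mul T^{**}$, which holds because $\mul T^{**}$ is closed.)

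The second statement, however, contains a genuine gap. Your whole argument rests on the assertion that, because $(T_1)^{**}$ is a bounded operator defined on all of $\cdom T$, this ``forces $\dom T^{**}$ to be closed''; no justification is given, and boundedness of $T_1$ alone cannot force this. Indeed, take $T_1=0$ (certainly bounded and regular, with $\dom T_1=\dom T$) and $T_2$ any unbounded closable operator whose closure has non-closed domain; then $T=T_1+T_2=T_2$ is a distinguished orthogonal range decomposition with $T_1$ bounded, yet $\dom T^{**}$ is not closed. What rescues the claim in the Lebesgue type setting is the singularity of $T_2$, which your argument never invokes: since $T_2$ is singular, $(T_2)^{**}$ is a product of closed subspaces with $\dom (T_2)^{**}\supset \cdom T$, so for any $f\in\cdom T$ one has $\{f,0\}\in (T_2)^{**}$ and may choose $\{f_n,h_n\}\in T_2$ with $f_n\to f$ and $h_n\to 0$; then $\{f_n,\,T_1f_n+h_n\}\in T$ and $T_1f_n\to (T_1)^{**}f$ by boundedness, whence $\{f,(T_1)^{**}f\}\in T^{**}$ and $\dom T^{**}=\cdom T$ is closed. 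This approximation argument is precisely the content of the identity $T^{**}=(T_1)^{**}+(T_2)^{**}$ on which the paper's proof rests; from that identity the paper concludes more directly that $\mul T^{**}=\mul (T_2)^{**}=\mul T^{**}\oplus\sL$, hence $\sL=\{0\}$, with no appeal to Theorem \ref{uninew}. Once you insert such an argument, your detour through Theorem \ref{uninew} (using that $\dom T^{**}$ and $\dom T^{*}$ are simultaneously closed, and then the implication (ii) $\Rightarrow$ (i)) is valid, and your final contrapositive remark is fine.
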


\begin{proof}
Each Lebesgue type decomposition of $T$ is given by \eqref{lebdeco}
with the closed linear subspace $\sM=\cran T_2^{**}=\mul
T^{**}\oplus \sL$ as in Theorem \ref{Lebtype2}.

Now assume that \eqref{char} holds. Then $\{f,(I-P_{\sM})f'\}\in
T_1\subset T^{**}$ for every $\{f,f'\} \in T$ and hence it follows
from
\[
 \{f,f'\}=\{f,(I-P_{\sM})f'\}+\{0,P_{\sM}f'\} \in T
\]
that $\{0,P_{\sM}f'\} \in T^{**}$. Thus, $\ran T_2=P_{\sM}\ran
T\subset \mul T^{**}$ and, since $T_2$ is singular, this leads to
$\mul T_2^{**}=\cran T_2\subset \mul T^{**}$. Hence, $\sL=\{0\}$ and
by Theorem \ref{Lebtype2} the decomposition $T=T_1+T_2$ corresponds
to the Lebesgue decomposition of $T$.

To prove the second assertion assume that the regular part
$T_1=(I-P_{\sM})T$ is a bounded operator. Then
$T^{**}=T_1^{**}+T_2^{**}$ and hence $\mul T^{**}=\mul T_2^{**}$. By
Theorem \ref{Lebtype2} this means that $\sL=\{0\}$ which corresponds
to the Lebesgue decomposition of $T$. In particular, if $T_1$ is
bounded then $T_1=T_{\rm reg}$, which gives the last statement.
\end{proof}

Observe, that the condition \eqref{charchar} is equivalent to the
conditions (i)--(iii) in Theorem \ref{uninew} and, therefore, it
contains a further extension of the uniqueness result of Ando. A
simple application of Theorem \ref{uninew} gives also the following
uniqueness result.

\begin{corollary}\label{domtster}
Let $T$ be a linear relation from a Hilbert space $\sH$
to a Hilbert space $\sK$. Then $\dom T^{**} = \sH$ if and only
if $T^{*}$ is a bounded operator. If either condition holds,
then $T$ is densely defined and there is the following alternative:
\begin{enumerate}[ $(\rm i)$]
\item  $\cdom T^{*}= \sH$ and $T^{**} \in \bB(\sH,\sK)$;

\item  $\cdom T^{*} \neq \sH$ and $(I-P)T^{**} \in \bB(\sH,\sK)$.
\end{enumerate}
The Lebesgue decomposition $T=(I-P)T+PT$ is
the unique Lebesgue type decomposition of $T$.
Moreover, $(T_{\rm sing})^{**}=\sH \times (\dom T^*)^\perp$.
\end{corollary}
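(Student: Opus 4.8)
The plan is to derive every assertion from results already established, reading Proposition~\ref{stoch0} for the closed relation $T^*$ and invoking Theorem~\ref{uninew} and Theorem~\ref{jor0}.

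First I would prove the equivalence $\dom T^{**}=\sH \Leftrightarrow T^* \text{ is a bounded operator}$. The point is that $T^*$ is always closed, so $(T^*)^{**}=T^*$, and the equivalence (iv)$\Leftrightarrow$(vi) of Proposition~\ref{stoch0}, applied with $T^*$ in the role of $T$, reads: $(T^*)^{**}=T^*$ is a bounded operator if and only if $\dom (T^*)^{*}=\dom T^{**}$ equals the whole space $\sH$ into which $T^*$ maps. This is precisely the claimed equivalence, with no further computation.

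Assume now $\dom T^{**}=\sH$. Dense definedness is immediate from the identity $(\dom T^{**})^\perp=\mul T^*$, which forces $\mul T^*=\{0\}$, together with $\cdom T=\cdom T^{**}=\sH$. For the dichotomy I would use $\cdom T^*=(\mul T^{**})^\perp$, so that $\cdom T^*=\sK$ and $\cdom T^*\neq\sK$ correspond exactly to $\mul T^{**}=\{0\}$ (that is, $T$ regular) and $\mul T^{**}\neq\{0\}$. In the regular case, the general principle that $\dom T^{**}$ and $\dom T^{*}$ are simultaneously closed turns the closed identity $\dom T^{**}=\sH$ into closedness of $\dom T^{*}$; combined with $\cdom T^*=\sK$ this gives $\dom T^*=\sK$, and then Proposition~\ref{stoch0}~(ix)$\Rightarrow$(vii) yields $T^{**}\in\bB(\sH,\sK)$. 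In the non-regular case I would note that $(I-P)T^{**}$ is the orthogonal operator part of the closed relation $T^{**}$, hence a closed operator with domain $\dom T^{**}=\sH$; the closed graph theorem then gives $(I-P)T^{**}\in\bB(\sH,\sK)$.

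The remaining two assertions follow by citation. Since $\dom T^{**}=\sH$ is closed, $\dom T^*$ is closed, so Theorem~\ref{uninew}~(ii)$\Rightarrow$(i) shows that the Lebesgue decomposition $T=(I-P)T+PT$ is the only Lebesgue type decomposition of $T$; and Theorem~\ref{jor0}~(ii), with $\cdom T^{**}=\sH$ and $\mul T^{**}=(\dom T^*)^\perp$, gives $(T_{\rm sing})^{**}=\cdom T^{**}\times\mul T^{**}=\sH\times(\dom T^*)^\perp$. I do not foresee a real obstacle: each step reduces to a statement proved earlier, and the only place demanding a moment's care is the upgrade from $\cdom T^*=\sK$ to $\dom T^*=\sK$ in the regular case, where the closedness of $\dom T^*$, inherited from that of $\dom T^{**}$ via the general principle, is exactly the ingredient needed.
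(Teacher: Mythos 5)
Your proof is correct and takes essentially the same route as the paper's: the paper establishes the equivalence and the dichotomy by direct closed-graph-theorem arguments together with the simultaneous-closedness principle, which is precisely what your citations of Proposition \ref{stoch0} (applied to the closed relation $T^*$, and items (ix)$\Rightarrow$(vii) in case (i)) and of the closedness of the operator part $(I-P)T^{**}$ unwind to, and both proofs finish by invoking Theorem \ref{uninew} for uniqueness (the paper via its item (iii), you via item (ii)) and Theorem \ref{jor0} for the formula $(T_{\rm sing})^{**}=\sH\times(\dom T^*)^\perp$. You also correctly read the condition $\cdom T^*=\sH$ in the statement as $\cdom T^*=\sK$, which is what the dichotomy requires since $\dom T^*\subset\sK$.
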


\begin{proof}
If $\dom T^{**}=\sH$, then
$\dom T^{*}$ is closed and $\mul T^{*}=\{0\}$.
By the closed graph theorem, $T^{*}$ is a bounded operator.
Conversely, if $T^{*}$ is a bounded operator,
then $\dom T^{*}$ is closed
and, hence, $\dom T^{**}$ is closed.
Since $T^{*}$ is an operator, it follows that $\dom T^{**}$ is dense.
Hence $\dom T^{**}=\sH$.

If either condition is satisfied,  then
$\cdom T=\cdom T^{**}$ shows that $T$ is densely defined.
In this case there are two possibilities:
\begin{enumerate}[ $(\rm i)$]
\item  $\cdom T^{*}= \sH$, in which case  $\mul T^{**}=\left\{0\right\}$
and $T^{**}$ is a closed operator.
By the closed graph theorem $T^{**} \in \bB(\sH, \sK)$.

\item $\cdom T^{*} \neq \sH$, in which case
$\mul T^{**} \neq \left\{0\right\}$.
Then the operator $(I-P)T^{**}$ is closable and everywhere
defined.  This implies that $(I-P)T^{**}$ is closed and, hence, bounded,
so that  $(I-P)T^{**} \in \bB(\sH,\sK)$.
\end{enumerate}
Moreover,  it follows from $(I-P)T \subset (I-P)T^{**}$ that the
regular part of $T$ is bounded. Hence the Lebesgue decomposition
$T=(I-P)T+PT$ is the unique Lebesgue type decomposition of $T$. The
last statement is clear from Theorem \ref{jor0}.
\end{proof}

Let $T$ be a linear relation from a Hilbert space $\sH$ to a Hilbert
space $\sK$ with $\dom T=\sH$. Then the inclusion $T \subset
\overline{T} =T^{**}$ shows that $\sH=\dom T \subset \dom T^{**}$,
so that $\dom T^{**} = \sH$. Hence, Corollary \ref{domtster} may be applied.
If $\cdom T^{*}=\sH$, then  $T \in \bB(\sH,\sK)$, while if $\cdom
T^{*} \neq \sH$, then $(I-P)T \in \bB(\sH,\sK)$. Note that if $T$ is
a linear operator from a Hilbert space $\sH$ to a Hilbert space
$\sK$ with $\dom T=\sH$ and $\cdom T^* \neq \sK$, then $T$ is not
closable and therefore not bounded.  An example of such an operator
can be found in \cite[p. 62]{AG81}.

\section{Weak Lebesgue type decompositions for linear relations}\label{sec7}

Let $T$ be a linear relation from a Hilbert space $\sH$ to a Hilbert
space $\sK$. Parallel to the earlier results one may ask for a weak
version of  the  Lebesgue type decomposition $T=T_{1}+T_{2}$, where
now $T_{1}$ is required to be an operator, which is not necessarily
closable, and $T_{2}$ is, as before, a singular relation. Such
decompositions are briefly treated in this section.

In order to obtain a weak version of the Lebesgue decomposition,
consider the following orthogonal decomposition of the space $\sK$:
\[
 \sK=(\cmul T)^{\perp} \oplus \cmul T.
\]
This decomposition implies a corresponding distinguished orthogonal
range decomposition of the relation $T$ itself.
Define the relations $T_{\rm op}$ and
$T_{\rm mul}$ from $\sH$ to $\sK$  by
\begin{equation}\label{Tm}
T_{\rm op}= (I-P_m)T, \quad T_{\rm mul}=P_mT,
\end{equation}
where $P_m$ is the orthogonal projection from $\sK$ onto $\cmul T$.
The next result is a weak version of Theorem \ref{jor0}.

\begin{theorem}\label{Tmlemma}
Let $T$  be a relation from a Hilbert space
$\sH$ to a Hilbert space $\sK$.
Then the relations $T_{\rm op}$ and $T_{\rm mul}$ in \eqref{Tm}
have the following properties:
\begin{enumerate}[ $ (\rm i)$]
\item $T_{\rm  op}$ is an operator,
\item $(T_{\rm mul})^{**}=\cdom T^{**} \times \cmul T$,
i.e., $T_{\rm mul}$ is singular,
\end{enumerate}
and $T$ has the distinguished orthogonal range decomposition:
\begin{equation}\label{decoms0}
 T=T_{\rm op}+T_{\rm mul}.
\end{equation}
\end{theorem}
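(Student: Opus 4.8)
The plan is to obtain all three assertions as direct applications of Lemma~\ref{Lebtypelem} and Lemma~\ref{orth+} with the orthogonal projection $Q=P_m$ onto $\cmul T$, in complete parallel with the proof of Theorem~\ref{jor0}, where the projection $P$ onto $\mul T^{**}$ was used instead. The two structural facts I would carry throughout are the chain \eqref{mul}, namely $\mul T \subset \cmul T \subset \mul T^{**}$, together with the identity $(\dom T^{*})^{\perp}=\mul T^{**}$ recorded in the preliminaries. With $\ran P_m=\cmul T$ and $\ker P_m=(\cmul T)^{\perp}$, these facts control everything that follows.

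For (i) I would invoke item (i) of Lemma~\ref{Lebtypelem}: the relation $(I-P_m)T$ is an operator precisely when $\mul T \subset \ran P_m=\cmul T$, and this inclusion is immediate since $\cmul T$ is by definition the closure of $\mul T$. Hence $T_{\rm op}$ is an operator with no further work. For (ii), the decisive observation is that $\cmul T \cap \dom T^{*}=\{0\}$: indeed, from \eqref{mul} one has $\cmul T \subset \mul T^{**}=(\dom T^{*})^{\perp}$, so any vector in $\cmul T \cap \dom T^{*}$ lies in $\dom T^{*}\cap(\dom T^{*})^{\perp}$ and is therefore zero. Thus $\ran P_m \cap \dom T^{*}=\{0\}\subset \ker T^{*}$, which is exactly condition \eqref{Mleb2} of Lemma~\ref{Lebtypelem}(iii), giving the singularity of $T_{\rm mul}=P_mT$. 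To read off the form of $(T_{\rm mul})^{**}$ I would then substitute into \eqref{Mleb22}: since $\ran P_m \cap \dom T^{*}=\{0\}$, the factor $(\ran P_m \cap \dom T^{*})^{\perp}$ equals $\sK$, so $\ran P_m \cap \sK=\cmul T$ and \eqref{Mleb22} collapses to $(T_{\rm mul})^{**}=\cdom T^{**}\times \cmul T$, as claimed.

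For the decomposition \eqref{decoms0} I would appeal to Lemma~\ref{orth+}: the identity $T=(I-P_m)T+P_mT$ holds if and only if $P_m\,\mul T \subset \mul T$. Since $\mul T \subset \cmul T=\ran P_m$, the projection $P_m$ acts as the identity on $\mul T$, whence $P_m\,\mul T=\mul T$ and the hypothesis holds trivially. The equalities $\dom T_{\rm op}=\dom T_{\rm mul}=\dom T$ are automatic because multiplying by a projection does not change the domain, and the inclusions $\ran T_{\rm op}\subset \ker P_m$ and $\ran T_{\rm mul}\subset \ran P_m$ yield $\ran T_{\rm op}\perp \ran T_{\rm mul}$; combined with (i), this makes \eqref{decoms0} a distinguished orthogonal range decomposition.

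I expect no genuine obstacle here, since the whole argument is a transcription of the already-established lemmas with the projection $P$ replaced by $P_m$. The only point requiring a moment's thought is the triviality of $\cmul T \cap \dom T^{*}$, which rests on the single inclusion $\cmul T \subset (\dom T^{*})^{\perp}$ from \eqref{mul}; once that is in hand, both the singularity of $T_{\rm mul}$ and the explicit form of $(T_{\rm mul})^{**}$ follow mechanically from Lemma~\ref{Lebtypelem}(iii).
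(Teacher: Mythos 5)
Your proposal is correct and follows essentially the same route as the paper: both apply Lemma~\ref{Lebtypelem} with $Q=P_m$ (using $\mul T\subset\cmul T$ for (i), and $\cmul T\cap\dom T^{*}\subset\mul T^{**}\cap\cdom T^{*}=\{0\}$ together with \eqref{Mleb22} for (ii)), and both obtain \eqref{decoms0} from Lemma~\ref{orth+} via $P_m\,\mul T\subset\mul T$. The extra remarks you add about equal domains and orthogonal ranges are correct and merely make explicit what the paper leaves implicit.
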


\begin{proof}
Apply Lemma \ref{Lebtypelem} with $Q=P_{m}$. Since $\mul T \subset
\cmul T=\ran Q$, the component $T_{\rm op}$ is an operator.
Furthermore,
\[
 \ran Q \cap \dom T^{*}=\cmul T \cap \dom T^{*} \subset
 \mul T^{**}\cap \cdom T^{*}=\{0\},
\]
so that the relation $T_{\rm mul}$ is singular.
Due to $\ran Q \cap \dom T^{*}=\{0\}$
the identity in (ii)   follows from \eqref{Mleb22}.
Since $\mul T \subset \cmul T$ one has
\[
 P_{m} \, (\mul T) \subset \mul T.
\]
Hence, according to \thref{orth+}, the relation $T$ has the
orthogonal range decomposition \eqref{decoms0}.
\end{proof}

\begin{definition}
The distinguished orthogonal range decomposition in \eqref{decoms0} is called
the \textit{weak Lebesgue decomposition} of the relation $T$.
\end{definition}

Let $P$ be the orthogonal projection onto $\mul T^{**}$; then by
\eqref{mul} it follows that $\ran P_m\subset \ran P$ and
\[
(I-P)T_{\rm op}=(I-P)(I-P_m)T=(I-P)T.
\]
Hence the operator part $T_{\rm op}$ and the regular part
$T_{\rm reg}$ in \eqref{resi} are connected by
\begin{equation}\label{regg}
T_{\rm reg}=(I-P)T_{\rm op}.
\end{equation}

\begin{corollary}
The operator $T_{\rm op}$ is closable if and only if
$\cmul T=\mul T^{**}$.
In fact, the multivalued parts of the closures of $T$ and
$T_{\rm op}$ are connected by
\begin{equation}\label{Tm2+}
\mul T^{**}= \mul (T_{\rm op})^{**} \oplus \cmul T.
\end{equation}
\end{corollary}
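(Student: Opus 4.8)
The plan is to compute the adjoint of $T_{\rm op}=(I-P_m)T$ and then extract $\mul (T_{\rm op})^{**}$ from the general identity $(\dom S^{*})^{\perp}=\mul S^{**}$ applied to $S=T_{\rm op}$. First I would invoke Lemma \ref{PT} with the orthogonal projection $Q=I-P_m$, whose range is $(\cmul T)^{\perp}$ and whose kernel is $\cmul T$. This gives $(T_{\rm op})^{*}=T^{*}(I-P_m)$ together with the domain formula \eqref{PT0}:
\[
 \dom (T_{\rm op})^{*}=\ker(I-P_m)\oplus(\dom T^{*}\cap\ran(I-P_m))
 =\cmul T\oplus(\dom T^{*}\cap(\cmul T)^{\perp}).
\]

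The step that makes everything collapse is the following observation. By \eqref{mul} one has $\cmul T\subset\mul T^{**}$, and since $(\dom T^{*})^{\perp}=\mul T^{**}$ this inclusion is equivalent to $\dom T^{*}\subset(\cmul T)^{\perp}$. Consequently $\dom T^{*}\cap(\cmul T)^{\perp}=\dom T^{*}$, and the domain formula reduces to the orthogonal sum $\dom (T_{\rm op})^{*}=\cmul T\oplus\dom T^{*}$.

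Next I would pass to orthogonal complements. Using that the complement of a sum of subspaces is the intersection of their complements, together with $(\dom (T_{\rm op})^{*})^{\perp}=\mul (T_{\rm op})^{**}$, I obtain
\[
 \mul (T_{\rm op})^{**}=(\cmul T)^{\perp}\cap(\dom T^{*})^{\perp}=(\cmul T)^{\perp}\cap\mul T^{**}.
\]
Since $\cmul T$ is a closed subspace contained in the closed subspace $\mul T^{**}$, the latter splits orthogonally as $\cmul T\oplus\big(\mul T^{**}\cap(\cmul T)^{\perp}\big)$, which is exactly the claimed identity \eqref{Tm2+}, namely $\mul T^{**}=\mul (T_{\rm op})^{**}\oplus\cmul T$.

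Finally, the closability criterion drops out: by definition $T_{\rm op}$ is closable precisely when $(T_{\rm op})^{**}$ is an operator, i.e. when $\mul (T_{\rm op})^{**}=\{0\}$, and in view of \eqref{Tm2+} this happens if and only if $\mul T^{**}=\cmul T$. I do not anticipate a real obstacle in this argument; the only delicate point is recognizing that the inclusion $\cmul T\subset\mul T^{**}$ is the same as the orthogonality $\dom T^{*}\perp\cmul T$, and it is this orthogonality that eliminates the intersection term and drives the whole computation.
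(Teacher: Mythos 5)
Your proposal is correct and follows essentially the same route as the paper: both compute $\dom (T_{\rm op})^{*}$ via Lemma \ref{PT}, reduce it to the orthogonal sum $\cmul T \oplus \dom T^{*}$ using the inclusion $\dom T^{*} \subset (\cmul T)^{\perp}$ (the paper phrases this as $\dom T^{*}\subset\ran(I-P)\subset\ran(I-Q_m)$, you phrase it via $\cmul T\subset\mul T^{**}=(\dom T^{*})^{\perp}$, which is the same fact), and then take orthogonal complements to obtain $\mul (T_{\rm op})^{**}=\mul T^{**}\ominus\cmul T$. Your explicit final paragraph deriving the closability criterion from \eqref{Tm2+} is exactly what the paper leaves implicit.
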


\begin{proof}
In order to consider the Lebesgue decomposition of $T_{\rm op}$
note that by Lemma \ref{PT} one has
\[
\dom (T_{\rm op})^*=\ker (I-Q_{m}) \oplus (\dom T^*\cap \ran
(I-Q_m)).
\]
Since $\dom T^*\subset \ran (I-P)\subset \ran (I-Q_m)$, one obtains
\begin{equation}\label{domTm*}
\dom (T_{\rm op})^*=\ran Q_m \oplus \dom T^*=\cmul T  \oplus \dom
T^*.
 \end{equation}
 Taking orthogonal complements in \eqref{domTm*}   gives
\begin{equation}\label{Tm2++}
\mul (T_{\rm op})^{**}= \mul T^{**} \ominus \cmul T,
\end{equation}
from which \eqref{Tm2+} follows.
\end{proof}

\begin{corollary}\label{TmCor}
 The Lebesgue decomposition of $T_{\rm op}$
is given by
\begin{equation}\label{Tm2--}
 T_{\rm op}=(T_{\rm op})_{\rm reg}+(T_{\rm op})_{\rm sing},
\end{equation}
where the singular part $(T_{\rm op})_{\rm sing}$ is given by
\begin{equation}\label{Tm2-}
((T_{\rm op})_{\rm sing})^{**}=\cdom T^{**} \times \mul (T_{\rm
op})^{**}.
\end{equation}
The regular and singular parts of $T$ and $T_{\rm op}$ are connected
via
\begin{equation}\label{Tm2}
 T_{\rm reg}=(T_{\rm op})_{\rm reg}, \quad T_{\rm sing}=(T_{\rm op})_{\rm sing}+T_{\rm mul}.
\end{equation}
In particular, the Lebesgue type decompositions of $T$ and $T_{\rm
op}$ are unique simultaneously.
\end{corollary}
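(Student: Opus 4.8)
The plan is to identify the orthogonal projection that governs the Lebesgue decomposition of the operator $T_{\rm op}$ and to express it through the projections already in play. Write $P'$ for the orthogonal projection of $\sK$ onto $\mul (T_{\rm op})^{**}$, so that by \eqref{resi} applied to $T_{\rm op}$ one has $(T_{\rm op})_{\rm reg}=(I-P')T_{\rm op}$ and $(T_{\rm op})_{\rm sing}=P'T_{\rm op}$. The decisive structural input is \eqref{Tm2+} from the preceding corollary, $\mul T^{**}=\mul (T_{\rm op})^{**}\oplus \cmul T$, which shows $\ran P_m\subset \ran P$ and $\mul (T_{\rm op})^{**}=\ran P\ominus \ran P_m$. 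Hence $P=P'+P_m$ with $P'P_m=P_mP'=0$, and this single identity drives the whole argument.

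With this decomposition of $P$ in hand, I would first establish the regular-part identity in \eqref{Tm2}. Since $P'$ and $P_m$ are orthogonal projections onto orthogonal ranges, $(I-P')(I-P_m)=I-P'-P_m=I-P$, so that $(T_{\rm op})_{\rm reg}=(I-P')(I-P_m)T=(I-P)T=T_{\rm reg}$. For the singular-part formula \eqref{Tm2-} I would apply Theorem \ref{jor0}~(ii) to the operator $T_{\rm op}$, obtaining $((T_{\rm op})_{\rm sing})^{**}=\cdom (T_{\rm op})^{**}\times \mul (T_{\rm op})^{**}$; because $\dom T_{\rm op}=\dom T$ one has $\cdom (T_{\rm op})^{**}=\cdom T_{\rm op}=\cdom T=\cdom T^{**}$ (using $(\dom T^{**})^\perp=\mul T^*=(\dom T)^\perp$), which is exactly \eqref{Tm2-} and at the same time confirms that \eqref{Tm2--} is indeed the Lebesgue decomposition of $T_{\rm op}$.

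The step I expect to require the most care is the second identity in \eqref{Tm2}, namely $T_{\rm sing}=(T_{\rm op})_{\rm sing}+T_{\rm mul}$, because the right-hand side is a sum of relations in the sense of \eqref{Tdecomp00} and need not a priori collapse to $PT$. I would compute $(T_{\rm op})_{\rm sing}=P'T_{\rm op}=P'(I-P_m)T=P'T$ and $T_{\rm mul}=P_mT$, so the sum equals $\{\{f,P'h+P_mk\}:\{f,h\},\{f,k\}\in T\}$. The point is that whenever $\{f,h\},\{f,k\}\in T$ one has $h-k\in \mul T\subset \cmul T=\ran P_m$, whence $P'(h-k)=0$ and therefore $P'h+P_mk=P'k+P_mk=Pk$; this places the sum inside $PT=T_{\rm sing}$, while the reverse inclusion is immediate on taking $h=k$. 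Thus the two summands reassemble precisely to $T_{\rm sing}$.

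Finally, for the simultaneous uniqueness I would invoke Theorem \ref{uninew}, according to which uniqueness of the Lebesgue type decomposition is equivalent to closedness of the domain of the adjoint. By \eqref{domTm*}, $\dom (T_{\rm op})^*=\cmul T\oplus \dom T^*$ is an orthogonal sum with $\cmul T$ closed, the orthogonality following from $\dom T^*\subset \cdom T^*=(\mul T^{**})^\perp\subset (\cmul T)^\perp$. Consequently its closure is $\cmul T\oplus \clos(\dom T^*)$, so $\dom (T_{\rm op})^*$ is closed exactly when $\dom T^*$ is closed. By Theorem \ref{uninew} this is precisely the statement that the Lebesgue type decompositions of $T$ and $T_{\rm op}$ are unique simultaneously.
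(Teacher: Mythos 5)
Your proposal is correct and takes essentially the same route as the paper: both rest on the splitting $P=P_r+P_m$ obtained from \eqref{Tm2+}, the computations $(T_{\rm op})_{\rm reg}=(I-P_r)(I-P_m)T=(I-P)T=T_{\rm reg}$ and $(T_{\rm op})_{\rm sing}=P_rT$, and an application of Theorem \ref{jor0} to $T_{\rm op}$ together with $\cdom (T_{\rm op})^{**}=\cdom T^{**}$ to get \eqref{Tm2-}. Your only deviations are minor and both sound: you verify explicitly that the relation sum $P_rT+P_mT$ (in the sense of \eqref{Tdecomp00}) collapses to $PT$ — a point the paper passes over silently — and for the simultaneous uniqueness you invoke criterion (ii) of Theorem \ref{uninew} via \eqref{domTm*}, where the paper invokes criterion (iii) via the equality of regular parts in \eqref{Tm2}.
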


\begin{proof}
Let $P_r$ be the orthogonal projection onto $\mul (T_{\rm op})^{**}$
in $\sK$. Then $(T_{\rm op})_{\rm reg}=(I-P_{r}) T_{\rm op}$ and
$(T_{\rm op})_{\rm sing}=P_{r} T_{\rm op}$. By Theorem \ref{jor0}
the decomposition \eqref{Tm2--} holds and
\[
 ((T_{\rm op})_{\rm sing})^{**}=\cdom (T_{\rm op})^{**} \times \mul
 (T_{\rm op})^{**},
\]
which combined with
\[
 \cdom (T_{\rm op})^{**}=\cdom (T_{\rm op})= \cdom T=\cdom T^{**}
\]
gives \eqref{Tm2-}. The orthogonal decomposition  \eqref{Tm2+} shows
that $P=P_r+Q_m$ with $P_rQ_m=0$. Consequently, one obtains
$(I-P_r)(I-Q_m)=I-P$ and therefore
\[
 (T_{\rm op})_{\rm reg}=(I-P_r)T_{\rm op}=(I-P_r)(I-Q_m)T=(I-P)T=T_{\rm reg}.
\]
This proves the first statement in \eqref{Tm2}.
Similarly, $P_r(I-Q_m)=P_r$ implies that
\[
 (T_{\rm op})_{\rm sing}=P_rT_{\rm op}=P_r(I-Q_m)T=P_rT.
\]
The identity $P=P_r+Q_m$ now gives the stated formula for the
singular part $T_{\rm sing}$:
\[
 T_{\rm sing}=(P_r+Q_m)T=P_rT+ T_{\rm mul}=(T_{\rm op})_{\rm sing} + T_{\rm mul}.
\]

The statement concerning the uniqueness of Lebesgue type
decompositions of $T$ and $T_{\rm op}$ follows immediately from the
equality of the regular parts in \eqref{Tm2} by Theorem \ref{uninew}, since
$T_{\rm reg}$ is bounded if and only if  $(T_{\rm op})_{\rm reg}$
is bounded.
\end{proof}

\begin{definition}\label{defwLebtype}
Let $T$  be a linear relation from a Hilbert space $\sH$ to a
Hilbert space $\sK$. Then $T$ is said to have a \textit{weak
Lebesgue type decomposition} if it admits an orthogonal range
decomposition $T=T_1+T_2$, where $T_1$ is an operator and $T_2$ is
singular.
\end{definition}

Next to the above existence argument there is an enumeration of all
weak Lebesgue type decompositions of $T$.

\begin{theorem}\label{wLebtype}
Let $T$  be a relation from a Hilbert space
$\sH$ to a Hilbert space $\sK$ and let
$\sM$ be closed linear subspace in $\sK$ such that
\begin{equation}\label{wMleb}
    \cmul T \subset \sM,
   \quad
   \sM \cap \dom T^* \subset \ker T^*.
\end{equation}
Then the relations $(I-P_{\sM})T$ and $P_{\sM}T$ satisfy
\begin{enumerate}[ $ (\rm i)$]
\item $(I-P_{\sM})T$ is an operator,
 \item $( P_{\sM}T )^{**}
=\cdom T^{**}\times (\sM \cap (\sM \cap \dom T^{*})^{\perp})$,
i.e., $P_{\sM}T$ is singular,
\end{enumerate}
and $T$ has a weak Lebesgue type decomposition of the form
\begin{equation}\label{wlebdeco}
T = (I-P_{\sM}) T + P_{\sM}T.
\end{equation}
Conversely, if $T$ has a weak Lebesgue type decomposition
$T=T_1+T_2$ as in Definition~\ref{defwLebtype}, then the subspace
$\sM=\cran T_2\,(=\mul T_2^{**})$ satisfies \eqref{wMleb} and
generates this decomposition via \eqref{wlebdeco}: $T_1=(I-P_{\sM})
T$ and $T_2=P_{\sM}T$.
\end{theorem}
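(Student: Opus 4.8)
The plan is to follow the same route as in the proof of Theorem~\ref{Lebtype}, simply replacing the regularity requirement on the operator part by the weaker condition that it merely be an operator; the treatment of the singular part is left unchanged. Throughout I set $Q=P_{\sM}$, so that $\ran Q=\sM$ and $\ker Q=\sM^{\perp}$, and I record at the outset that, since $\sM$ is closed, the first condition in \eqref{wMleb} is equivalent to $\mul T\subset\sM$: indeed $\mul T\subset\cmul T\subset\sM$, and conversely $\mul T\subset\sM$ forces $\cmul T\subset\sM$ because $\sM$ is closed.

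For the forward direction I would verify (i) and (ii) by a direct appeal to Lemma~\ref{Lebtypelem}. The condition $\mul T\subset\sM=\ran Q$ is exactly \eqref{Mleb0}, so Lemma~\ref{Lebtypelem}(i) yields that $(I-P_{\sM})T$ is an operator, which is (i). The second condition $\sM\cap\dom T^{*}\subset\ker T^{*}$ in \eqref{wMleb} is exactly \eqref{Mleb2} with $\ran Q=\sM$, so Lemma~\ref{Lebtypelem}(iii) yields that $P_{\sM}T$ is singular and supplies the formula \eqref{Mleb22}, which here reads $(P_{\sM}T)^{**}=\cdom T^{**}\times(\sM\cap(\sM\cap\dom T^{*})^{\perp})$; this is (ii). To obtain the decomposition \eqref{wlebdeco} I would invoke Lemma~\ref{orth+}: because $\mul T\subset\sM$ the projection $P_{\sM}$ fixes $\mul T$, so $P_{\sM}\,\mul T=\mul T\subset\mul T$, and the criterion \eqref{incl0} gives $T=(I-P_{\sM})T+P_{\sM}T$. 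The equalities $\dom(I-P_{\sM})T=\dom P_{\sM}T=\dom T$ together with $\ran(I-P_{\sM})T\subset\sM^{\perp}\perp\sM\supset\ran P_{\sM}T$ make this an orthogonal range decomposition, and by (i)--(ii) it is of weak Lebesgue type.

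For the converse, given a weak Lebesgue type decomposition $T=T_1+T_2$ I would set $\sM=\cran T_2$ and apply Lemma~\ref{osd}(i): the projection produced there is precisely the orthogonal projection onto $\cran T_2=\sM$, so $Q=P_{\sM}$ and $T_1=(I-P_{\sM})T$, $T_2=P_{\sM}T$. It then remains to read off \eqref{wMleb} from the reverse implications in Lemma~\ref{Lebtypelem}: since $T_1=(I-P_{\sM})T$ is an operator, part (i) gives $\mul T\subset\sM$, hence $\cmul T\subset\sM$ as $\sM$ is closed; and since $T_2=P_{\sM}T$ is singular, part (iii) gives $\sM\cap\dom T^{*}\subset\ker T^{*}$. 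Finally, as $T_2$ is singular, Proposition~\ref{Tsinglemma0} gives $\ran T_2^{**}=\mul T_2^{**}$, whence $\sM=\cran T_2=\ran T_2^{**}=\mul T_2^{**}$, which is the parenthetical identity in the statement.

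I do not anticipate a genuine obstacle here: the theorem is the verbatim weak analogue of Theorem~\ref{Lebtype}, and each assertion is a direct citation of Lemma~\ref{Lebtypelem}, Lemma~\ref{orth+}, Lemma~\ref{osd}, or Proposition~\ref{Tsinglemma0}. The only points demanding a moment's care are the equivalence $\cmul T\subset\sM\Leftrightarrow\mul T\subset\sM$ (where closedness of $\sM$ is essential, since it is $\cmul T$ rather than $\mul T$ that appears in \eqref{wMleb}), and checking that the inclusion $P_{\sM}\,\mul T\subset\mul T$ needed to apply Lemma~\ref{orth+} does hold, which follows at once once $\mul T\subset\sM$ is in hand.
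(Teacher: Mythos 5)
Your proof is correct and follows essentially the same route as the paper: (i), (ii) and the conditions \eqref{wMleb} are read off from Lemma~\ref{Lebtypelem}, the decomposition \eqref{wlebdeco} comes from Lemma~\ref{orth+} (which is exactly what the paper's citation of Corollary~\ref{osdd}~(ii) packages), and the converse uses Lemma~\ref{osd} with $\sM=\cran T_2$. Your explicit justification of the parenthetical identity $\cran T_2=\mul T_2^{**}$ via Proposition~\ref{Tsinglemma0} is a small welcome addition that the paper leaves implicit.
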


\begin{proof}
The statements (i) and (ii) follow from Lemma \ref{Lebtypelem} while
the decomposition \eqref{wlebdeco} is obtained from
Corollary~\ref{osdd} (ii).

For the converse part one can apply (the proof of) Lemma~\ref{osd}
to get the representations for $T_1$ and $T_2$ with the choice
$\sM=\cran T_2$. Then the properties in \eqref{wMleb} follow again
from Lemma \ref{Lebtypelem}.
\end{proof}

Analogous to Theorem \ref{Lebtype2} there is a one-to-one
parametrization of all weak Lebesgue type decomposition of $T$.

\begin{theorem}\label{wwLebtype2}
Let $T$ be a relation from the Hilbert space $\sH$ to the Hilbert
space $\sK$. Then the weak Lebesgue type decompositions of $T$ are
in one-to-one correspondence via $\sM=\cran T_2=\cmul T \oplus \sL$
with the closed linear subspaces $\sL \subset (\cmul T)^\perp$ which
satisfy the condition
\begin{equation}\label{wwLleb2}
 \sL\cap \dom T^*=\{0\}.
\end{equation}
In particular, in this case
\begin{equation}\label{wwLleb2+}
 (T_2)^{**} =\cdom T^{**}\times (\cmul T \oplus \sL).
\end{equation}
\end{theorem}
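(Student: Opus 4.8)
The plan is to follow the template of the proof of Theorem~\ref{Lebtype2}, but with the closed multivalued part $\cmul T$ playing the role that $\mul T^{**}$ had there, and with the weak structure theorem, Theorem~\ref{wLebtype}, replacing its regular counterpart. The one simplification is that the orthogonal splitting $\sM=\cmul T\oplus\sL$ is now immediate, so that no analogue of Lemma~\ref{corl} is needed.

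First I would start from an arbitrary weak Lebesgue type decomposition $T=T_1+T_2$ and set $\sM=\cran T_2$. By (the proof of) Lemma~\ref{osd} one has $T_1=(I-P_\sM)T$ and $T_2=P_\sM T$, and by the converse part of Theorem~\ref{wLebtype} the subspace $\sM$ satisfies \eqref{wMleb}. Since the first condition in \eqref{wMleb} gives $\cmul T\subset\sM$, I may set $\sL:=\sM\ominus\cmul T\subset(\cmul T)^\perp$, so that $\sM=\cmul T\oplus\sL$.

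The crucial step is to sharpen the second condition in \eqref{wMleb} to the triviality \eqref{wwLleb2}. Here I would use the identity $(\dom T^*)^\perp=\mul T^{**}$ together with \eqref{mul}, which gives $\cmul T\subset\mul T^{**}=(\dom T^*)^\perp$ and hence $\cmul T\perp\dom T^*$. Consequently, for $m\in\sM\cap\dom T^*$ the $\cmul T$-component of $m$ must vanish, so that $\sM\cap\dom T^*=\sL\cap\dom T^*$ and the inclusion $\sM\cap\dom T^*\subset\ker T^*$ becomes $\sL\cap\dom T^*\subset\ker T^*$. To upgrade this to triviality, I would take $g\in\sL\cap\dom T^*$; then $g\in\ker T^*=(\ran T)^\perp$ and $g\in\sM$, so the self-adjointness of $P_\sM$ yields $g=P_\sM g\perp P_\sM\ran T=\ran T_2$. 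Thus $g\perp\cran T_2=\sM$, while $g\in\sM$, forcing $g=0$. This reduction is the main obstacle; everything else is bookkeeping.

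For the converse and the explicit formula I would run this computation backwards. Given a closed $\sL\subset(\cmul T)^\perp$ with $\sL\cap\dom T^*=\{0\}$, I put $\sM=\cmul T\oplus\sL$; the same orthogonality argument gives $\sM\cap\dom T^*=\sL\cap\dom T^*=\{0\}\subset\ker T^*$, and $\cmul T\subset\sM$ is trivial, so \eqref{wMleb} holds and Theorem~\ref{wLebtype} produces the required weak Lebesgue type decomposition. To obtain \eqref{wwLleb2+} I would insert $\sM\cap\dom T^*=\{0\}$ into formula (ii) of Theorem~\ref{wLebtype}: since then $(\sM\cap\dom T^*)^\perp=\sK$, the factor $\sM\cap(\sM\cap\dom T^*)^\perp$ collapses to $\sM=\cmul T\oplus\sL$. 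Finally, \eqref{wwLleb2+} shows $\cran T_2=\mul(T_2)^{**}=\cmul T\oplus\sL$, so $\sL=\cran T_2\ominus\cmul T$ is recovered from $T_2$, while $T_2=P_\sM T$ is recovered from $\sM$ and hence from $\sL$; this establishes the claimed one-to-one correspondence.
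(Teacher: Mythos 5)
Your proposal is correct and follows essentially the same route as the paper: both arguments rest on Theorem \ref{wLebtype}, the orthogonal splitting $\sM=\cmul T\oplus\sL$ with $\sM\cap\dom T^*=\sL\cap\dom T^*$, and the identification of the extra condition $\sL\cap\dom T^*=\{0\}$ as equivalent to the normalization $\sM=\cran T_2$, with \eqref{wwLleb2+} yielding the bijectivity. The only (interchangeable) difference is how that equivalence is established: you argue element-wise, as in the proof of Theorem \ref{Lebtype2}, in one direction and via \eqref{wwLleb2+} in the other, whereas the paper computes $(\ran T_2)^\perp=\ker T^*P_\sM$ from Lemma \ref{PT} and takes orthogonal complements.
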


\begin{proof}
For the representation of $\sM$ use the condition $\cmul T \subset
\sM$ in \eqref{wMleb} to decompose $\sM=\cmul T \oplus \sL$ with
some closed subspace $\sL$. This decomposition of $\sM$ combined
with the second condition in \eqref{wMleb} leads to
\begin{equation}\label{wLsing1}
 \sM \cap \dom T^*=\sL \cap \dom T^*\subset \ker T^*,
\end{equation}
which means that $T_2:=P_\sM T$ is singular. On the other hand,
\[
 (\ran T_2)^\perp=\ker T^*P_\sM=\ker P_\sM \oplus(\ran P_\sM\cap
\ker T^*).
\]
By taking orthogonal complements in this identity one concludes that
\begin{equation}\label{wLsing2}
 \sM=\cran T_2 \quad\Leftrightarrow\quad \sM\cap \ker
 T^*=\{0\}.
\end{equation}
One concludes from the formulas \eqref{wLsing1} and \eqref{wLsing2}
that the choice $\sM=\cran T_2$ is equivalent to $\sL \cap \dom
T^*=\{0\}$. Now by Theorem \ref{wLebtype} all weak Lebesgue type
decompositions of $T$ can be described via $\sM=\cmul T \oplus
\sL=\cran T_2$ with some subspace $\sL$ satisfying \eqref{wwLleb2}.
Moreover, since $T_2$ is singular, one obtains \eqref{wwLleb2+} from
item (ii) of Theorem \ref{wLebtype}. The formula \eqref{wwLleb2+}
shows that the correspondence between the singular component $T_2$
and the closed subspace $\sL$ is bijective.
\end{proof}

Furthermore, there is an analog of Ando's uniqueness criterion for
weak Lebesgue type decompositions.

\begin{theorem}\label{uniOp}
Let $T$ be a relation from the Hilbert space $\sH$ to the Hilbert
space $\sK$. Then the following conditions are equivalent:

\begin{enumerate}[ $(\rm i)$]

\item
the weak Lebesgue decomposition is the only weak Lebesgue
type decomposition of $T$;

\item $\dom T^*=(\mul T)^{\perp}$;

\item $T_{\rm op}$ is bounded.
\end{enumerate}
In this case $ T_{\rm op}=T_{\rm reg}$ and the weak Lebesgue
decomposition coincides with the Lebesgue decomposition of $T$.
\end{theorem}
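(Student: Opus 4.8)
The plan is to establish the three equivalences by leveraging the one-to-one parametrization of weak Lebesgue type decompositions from Theorem \ref{wwLebtype2}, exactly as Theorem \ref{uninew} used Theorem \ref{Lebtype2}. The key observation is that by Theorem \ref{wwLebtype2}, every weak Lebesgue type decomposition corresponds to a closed subspace $\sL \subset (\cmul T)^\perp$ satisfying $\sL \cap \dom T^* = \{0\}$, with the weak Lebesgue decomposition itself corresponding to $\sL = \{0\}$. Thus uniqueness (i) holds precisely when $\sL = \{0\}$ is the \emph{only} such admissible subspace, i.e., when there is no nonzero closed subspace of $(\cmul T)^\perp$ meeting $\dom T^*$ trivially.

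For (i) $\Leftrightarrow$ (ii), I would analyze when a nonzero admissible $\sL$ can exist. Note that the second condition in \eqref{wMleb} together with the splitting forces attention on how $\dom T^*$ sits inside $(\mul T)^\perp = (\cmul T)^\perp$. If $\dom T^* = (\mul T)^\perp$, then any nonzero $\sL \subset (\cmul T)^\perp$ automatically satisfies $\sL \subset \dom T^*$, so $\sL \cap \dom T^* = \sL \neq \{0\}$, violating \eqref{wwLleb2}; hence only $\sL = \{0\}$ works and uniqueness holds. Conversely, if $\dom T^* \neq (\mul T)^\perp$, then $\dom T^*$ is a proper (operator range) subspace of $(\cmul T)^\perp$, and I would produce a nonzero closed $\sL \subset (\cmul T)^\perp$ with $\sL \cap \dom T^* = \{0\}$ — for instance a one-dimensional subspace spanned by a vector in $(\cmul T)^\perp \setminus \dom T^*$ — yielding a genuinely different decomposition and breaking uniqueness. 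This direction is where the geometry of operator ranges (as in Lemma \ref{nneeww} and Lemma \ref{SA}) supplies the needed nonzero $\sL$.

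For (ii) $\Leftrightarrow$ (iii), I would compute the domain of $(T_{\rm op})^*$. From \eqref{domTm*} one has $\dom (T_{\rm op})^* = \cmul T \oplus \dom T^*$, so that $\dom (T_{\rm op})^*$ is all of $\sK$ exactly when $\dom T^* = (\cmul T)^\perp = (\mul T)^\perp$. Since $T_{\rm op}$ is an operator (Theorem \ref{Tmlemma}(i)), Proposition \ref{stoch0} tells us $T_{\rm op}$ is strongly regular — i.e., bounded — precisely when $\dom (T_{\rm op})^* = \sK$, which is the condition in (ii). This gives (ii) $\Leftrightarrow$ (iii) directly. The final assertions then follow: when (iii) holds, $T_{\rm op}$ is bounded, hence closable, so by the Corollary after Theorem \ref{Tmlemma} one has $\cmul T = \mul T^{**}$, which forces $T_{\rm op} = (I-P_m)T = (I-P)T = T_{\rm reg}$ via \eqref{regg}, and correspondingly the weak Lebesgue decomposition collapses onto the Lebesgue decomposition.

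\textbf{The main obstacle} I expect is the forward direction of (i) $\Rightarrow$ (ii), specifically guaranteeing that $\dom T^* \neq (\mul T)^\perp$ actually yields an \emph{admissible} nonzero $\sL$ (one satisfying $\sL \cap \dom T^* = \{0\}$), rather than merely a nonzero subspace. Unlike the situation in Lemma \ref{SA}, where the relevant ambient space $\cdom T^*$ is closed and $\dom T^*$ is a dense operator range, here the containment of $\dom T^*$ in $(\cmul T)^\perp$ need not be dense, so a vector outside $\dom T^*$ is readily available and the one-dimensional construction $\sL = \spn\{e\}$ with $e \in (\cmul T)^\perp \setminus \dom T^*$ should suffice — the verification that $\spn\{e\} \cap \dom T^* = \{0\}$ is immediate since $e \notin \dom T^*$ and $\dom T^*$ is a subspace. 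Thus the obstacle is largely cosmetic, and the proof reduces to assembling these parametrization and adjoint-domain facts.
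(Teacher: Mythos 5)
Your proof is correct, and for the three equivalences it is essentially the paper's own argument: (i) $\Leftrightarrow$ (ii) via the parametrization of Theorem \ref{wwLebtype2} (the paper leaves the one-dimensional construction $\sL=\spn\{e\}$, $e\in(\cmul T)^\perp\setminus\dom T^*$, implicit; you spell it out, and your verification that $\spn\{e\}\cap\dom T^*=\{0\}$ is fine), and (ii) $\Leftrightarrow$ (iii) via $\dom(T_{\rm op})^*=\cmul T\oplus\dom T^*$ together with Proposition \ref{stoch0}. The only place you genuinely diverge is the final assertion: the paper observes that every Lebesgue type decomposition is in particular a weak Lebesgue type decomposition, so uniqueness in (i) forces the weak Lebesgue decomposition to coincide with the Lebesgue decomposition (invoking Theorem \ref{uninew}), whereas you argue directly from (iii) that $T_{\rm op}$, being bounded, is closable, hence $\cmul T=\mul T^{**}$ by the corollary after Theorem \ref{Tmlemma}, so $P_m=P$ and $T_{\rm op}=T_{\rm reg}$ by \eqref{regg}; your route is equally valid and arguably more self-contained, since it identifies the two projections explicitly rather than passing through the uniqueness statement.
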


\begin{proof}
(i) $\Leftrightarrow$ (ii) By Theorem \ref{wwLebtype2} the weak
Lebesgue type decompositions of $T$ are in one-to-one correspondence
via $\sM=\cran T_2=\cmul T \oplus \sL$ with the closed subspaces
$\sL \subset (\cmul T)^\perp$ which satisfy the condition
\eqref{wwLleb2}. Since $\dom T^*\subset (\cmul T)^\perp$, one
concludes that $\sL=\{0\}$ is the only subspace of $(\cmul T)^\perp$
satisfying the condition \eqref{wwLleb2} precisely when the equality
$\dom T^*=(\mul T)^\perp$ holds. Clearly, the choice $\sL=\{0\}$
corresponds to the weak Lebesgue decomposition in Theorem
\ref{Tmlemma}.

(ii) $\Leftrightarrow$ (iii) By Proposition~\ref{stoch0} (see items
(iv), (vi) therein) $T_{\rm op}=(I-P_m)T$ or, equivalently, $(T_{\rm
op})^{**}$, is a bounded operator if and only if
\begin{equation}\label{Topbounded}
\sK=\dom (T_{\rm op})^{*}=\dom T^*(I-P_m)=\ran P_m\oplus
\ran(I-P_m)\cap \dom T^*.
\end{equation} Since $\dom T^*\subset
(\mul T)^\perp=\ran (I-P_m)$ the condition in \eqref{Topbounded} is
equivalent to the condition $\dom T^*=(\mul T)^\perp$ in (ii).

As to the last statement observe that, since in a Lebesgue type
decomposition \eqref{lebdeco} in Theorem \ref{Lebtype} $(I-P_{\sM})
T$ is a closable operator and $P_{\sM}T$ is singular, the uniqueness
assumption implies that there is only one such decomposition which
then necessarily coincides the Lebesgue decomposition of $T$; cf.
Theorem~\ref{uninew}. In particular, the equality $T_{\rm op}=T_{\rm
reg}$ must hold. This completes the proof.
\end{proof}

The conditions on the subspace $\sL$ in Theorem \ref{wwLebtype2} are
essentially weaker than the conditions appearing in Theorem
\ref{Lebtype2}. Hence, in general the class of all weak Lebesgue
type decompositions is much wider than the class of all Lebesgue
type decompositions of $T$. In particular, if $T$ is an operator
then the class of all weak Lebesgue type decompositions is simply
parametrized by the subspaces
$\sM=\sL$ such that $\sM\cap \dom T^*=\{0\}$.\\

Finally, notice that an analog of Theorem \ref{ExtUnirep} does not
hold for weak Lebesgue type decompositions. In particular, if $T_1$
in the decomposition $T=T_1+T_2$ is a bounded operator, it does not
follow that $T_{\rm op}$ is bounded. As an example consider any
non-closable operator $T$, whose regular part $T_{\rm reg}$ is
bounded. Since $T$ is an operator, one has $T_{\rm op}=T$ and this
is unbounded because $T$ is not closable.

\section{Domination and closability}\label{sec8}

It has been shown in the previous sections that the (weak) Lebesgue
type decompositions are in general non-unique; in particular, this
is the case for any unbounded operator $T$ which always admits
infinitely many (weak) Lebesgue type decompositions. In this section
it is shown that there are specific domination properties for the
components $T_{\rm reg}$ and $T_{\rm op}$ which characterize the
unique (weak) Lebesgue decomposition of $T$ among all (weak)
Lebesgue type decompositions $T=T_1+T_2$. Moreover, as a further
result it is shown that the notion of domination can be used to
derive a closability criterion for operators in Hilbert spaces.

\subsection{Domination and maximality properties}
Linear relations  have a preorder which can be introduced by means
of the notion of domination, which in its present general form was
introduced and studied in \cite{HSn2015}.

\begin{definition}\label{domi}
Let $S_{1}$ and $S_{2}$ be a linear relations (not necessarily
closed) from a Hilbert space $\sH$ to Hilbert spaces $\sK_{1}$ and
$\sK_{2}$, respectively. Then $S_{1}$ is said to be
\textit{dominated} by $S_{2}$, denoted by
\begin{equation}\label{prec}
 S_{1} \prec S_{2},
\end{equation}
if there exists an operator $C \in \bB(\sK_{2}, \sK_{1})$ such that
$CS_{2} \subset S_{1}$. The domination is said to be contractive if
$C$ is a contraction.
\end{definition}

Clearly, domination is symmetric: $S\prec S$, and domination is also transitive: if
$C_1S_2\subset S_1$ and $C_2 S_3\subset S_2$ then
\[
 C_1(C_2S_3)\subset C_1 S_2\subset S_1,
\]
so that $(C_1C_2)S_3\subset S_1$. Hence $\prec$ defines a preorder
in the class of linear relations. However, the relation $\prec$ is
in general not antisymmetric, even when $\prec$ is contractive (or
isometric).

In the particular case that both $S_{1}$ and $S_{2}$ are operators
domination takes the following form; see  \cite{HSn2015}.

\begin{lemma}\label{domlemma}
Let $S_{1}$ and $S_{2}$ be linear operators from a Hilbert space
$\sH$ to Hilbert spaces $\sK_{1}$ and $\sK_{2}$, respectively.
 Then there is a constant $c>0$ such that
\begin{equation}\label{dominate}
\dom S_2\subset \dom S_1 \quad \text{and} \quad \|S_1f\|\le
c\|S_2f\|, \quad f\in \dom S_2,
\end{equation}
if and only if \,$S_{1}$ is dominated by $S_{2}$ with a bounded
operator $C\in\bB(\sK_2,\sK_1)$ with $\|C\| \leq c$ such that
\begin{equation}\label{include}
 C S_2 \subset S_1.
\end{equation}
The domination is contractive if and only if \eqref{dominate} holds
for some $c \leq 1$.
\end{lemma}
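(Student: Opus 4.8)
The plan is to treat this as the operator analog of a Douglas-type majorization result and to prove the two implications of the stated equivalence separately, deducing the contractive addendum at the end. The easy direction is to pass from the inclusion $C S_2 \subset S_1$ to the quantitative conditions in \eqref{dominate}; the substantive direction is to manufacture the operator $C$ out of the domain inclusion and the norm bound.

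First I would establish the direction assuming $C S_2 \subset S_1$ with $\|C\| \le c$. Since $C$ is an everywhere defined operator, the product $C S_2$ has the explicit form \eqref{prod0+}, namely $C S_2 = \{\{f, C S_2 f\} :\, f \in \dom S_2\}$. The inclusion $C S_2 \subset S_1$ then says precisely that every $f \in \dom S_2$ lies in $\dom S_1$ with $S_1 f = C S_2 f$; this gives the domain inclusion $\dom S_2 \subset \dom S_1$ at once, together with the estimate $\|S_1 f\| = \|C S_2 f\| \le \|C\|\,\|S_2 f\| \le c \|S_2 f\|$ for $f \in \dom S_2$, which is \eqref{dominate}.

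Next, for the converse, I would construct $C$ directly on the range of $S_2$. Define $C$ on $\ran S_2$ by $C(S_2 f) = S_1 f$ for $f \in \dom S_2$. The key step---and the only place where any care is needed---is well-definedness: if $S_2 f = S_2 f'$ then $S_2(f - f') = 0$, and the inequality in \eqref{dominate} forces $\|S_1(f - f')\| \le c\|S_2(f - f')\| = 0$, so $S_1 f = S_1 f'$. Thus $C$ is a well-defined linear map on $\ran S_2$, and the same inequality rewritten as $\|C(S_2 f)\| = \|S_1 f\| \le c \|S_2 f\|$ shows that $C$ is bounded by $c$ there. I would then extend $C$ by continuity to the closure $\cran S_2$ and set $C = 0$ on $(\cran S_2)^{\perp}$, producing $C \in \bB(\sK_2, \sK_1)$ with $\|C\| \le c$. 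To verify $C S_2 \subset S_1$, observe that for any $f \in \dom S_2$ one has $\{f, S_2 f\} \in S_2$, whence $\{f, C S_2 f\} = \{f, S_1 f\} \in S_1$ by construction; in view of \eqref{prod0+} this is exactly $C S_2 \subset S_1$.

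Finally, the contractive statement follows by tracking the constant. If \eqref{dominate} holds with some $c \le 1$, the construction above yields $C$ with $\|C\| \le c \le 1$, so the domination is contractive; conversely, if $C$ is a contraction with $C S_2 \subset S_1$, then the first direction applied with $c = \|C\| \le 1$ shows \eqref{dominate} holds with this $c \le 1$. The only genuine content of the lemma is the well-definedness of $C$ on $\ran S_2$, which is precisely the statement that the norm domination encodes the kernel inclusion needed to divide $S_1$ by $S_2$; everything else is bookkeeping with the product formula \eqref{prod0+} and a continuous extension.
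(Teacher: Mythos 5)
Your proposal is correct and follows essentially the same route as the paper's proof: define $C$ on $\ran S_2$ by $C(S_2f)=S_1f$ (well-defined and bounded by \eqref{dominate}), extend by continuity to $\cran S_2$ and by zero on its orthogonal complement, with the converse direction being immediate from \eqref{prod0+}. Your explicit verification of well-definedness via $S_2(f-f')=0 \Rightarrow S_1(f-f')=0$ is just a slightly more detailed rendering of a step the paper states without elaboration.
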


\begin{proof}
Assume that \eqref{dominate} holds and define an operator $C_0$ from
$\ran S_2$ to $\ran S_1$ by $C_0S_2f=S_1f$, $f\in \dom S_2$. It
follows from \eqref{dominate} that $C_0$ is well defined and bounded
with $\|C_0\|\leq c$. Thus $C_0$ can be continued to a bounded
operator from $\cran S_2$ to $\cran S_1$. Let $C$ be the extension
of the closure $C_0^{**}$ obtained by defining $C$ to be $0$ on
$(\ran S_2)^\perp$. Then $C \in \bB(\sK_{2}, \sK_{1})$, $\|C\| \leq
c$, and $CS_2\subset S_1$ holds.

Conversely, assume that \eqref{include} holds for some
$C\in\bB(\sK_{2}, \sK_{1})$ with $\|C\| \leq c$. Then clearly
\[
 \dom S_2\subset \dom S_1 \quad\text{and}\quad \|S_1f\| \le
\|C\|\,\|S_2f\|, \quad f\in\dom S_2,
\]
and \eqref{dominate} is satisfied.
\end{proof}

For some further connections between domination, Douglas type
factorizations, and range inclusions, see \cite{HSn2015}. Notice
that Douglas type factorizations and range inclusions for linear
relations were recently established in D. Popovici and Z.
Sebesty\'en \cite{PS} in the context of linear spaces.\\

Now let $T$ be a linear relation from  $\sH$ to $\sK$. Then the
regular part $T_{\rm reg}=(I-P)T$ in the Lebegue decomposition
\eqref{resi} is a closable operator which, by definition, is
contractively dominated by $T$:
\begin{equation}\label{domin}
 T_{\rm reg} \prec T.
\end{equation}
Similarly, the operator $T_{\rm op}=(I-P_{m})$ in the weak Lebesgue
decomposition \eqref{decoms0} is contractively dominated by $T$:
\begin{equation}\label{dominOp}
 T_{\rm op} \prec T.
\end{equation}
Due to $T_{\rm reg}=(I-P)T_{\rm op}$, cf. \eqref{regg}, one also sees that
$T_{\rm reg} \prec T_{\rm op}$ contractively, i.e.,
\[
\|T_{\rm reg}f\|\le \|T_{\rm op}f\|,  \quad f\in \dom T.
\]
In the next theorem it is shown that the regular part $T_{\rm reg}$ and the
operator part $T_{\rm op}$ possess certain maximality properties in the
sense of domination.

\begin{theorem}\label{regmax}
Let $S$ and $T$ be linear relations from a Hilbert space $\sH$
to a Hilbert space $\sK$.
Then domination is preserved for regular parts:
\begin{equation}\label{ass0a}
 S \prec T \quad\Rightarrow\quad S_{\rm reg}\prec T_{\rm reg},
\end{equation}
and also contractive domination is preserved.
In particular,  $T_{\rm reg}$ is a maximal closable operator that is
dominated by $T$:
\begin{equation}\label{ass0b}
 S=S_{\rm reg}\prec T \quad\Rightarrow\quad S \prec T_{\rm reg}.
\end{equation}
Similarly, domination is preserved for operator parts:
\begin{equation}\label{ass0a2}
 S \prec T \quad\Rightarrow\quad S_{\rm op}\prec T_{\rm op},
\end{equation}
and also contractive domination is preserved. In particular, $T_{\rm
op}$ is a maximal operator that is dominated by $T$:
\begin{equation}\label{ass0b2}
 S=S_{\rm op}\prec T \quad\Rightarrow\quad S \prec T_{\rm op}.
\end{equation}
\end{theorem}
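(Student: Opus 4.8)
The plan is to reduce all four implications to the operator form of domination in Lemma~\ref{domlemma}. Recall that $T_{\rm reg}$, $S_{\rm reg}$, $T_{\rm op}$, $S_{\rm op}$ are all (single-valued) operators with $\dom T_{\rm reg}=\dom T_{\rm op}=\dom T$ and $\dom S_{\rm reg}=\dom S_{\rm op}=\dom S$. A domination $S\prec T$ is realized by some $C\in\bB(\sK,\sK)$ with $CT\subset S$; in particular $\{f,g\}\in T$ forces $\{f,Cg\}\in S$, so $\dom T\subset\dom S$ and the required domain inclusions hold automatically for both the regular and the operator parts. Thus, by Lemma~\ref{domlemma}, it remains only to establish the pointwise norm bounds $\|S_{\rm reg}f\|\le\|C\|\,\|T_{\rm reg}f\|$ and $\|S_{\rm op}f\|\le\|C\|\,\|T_{\rm op}f\|$ for $f\in\dom T$; contractivity is then preserved, since $\|C\|\le 1$ yields dominating operators of norm at most $1$.

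The crucial ingredient will be two formulas for these norms. For the operator part, for $f\in\dom T$ one has $T_{\rm op}f=(I-P_m)g$ for every $g$ with $\{f,g\}\in T$, and since $P_m$ projects onto $\cmul T$ this gives
\[
 \|T_{\rm op}f\|=\dist(g,\cmul T)=\dist(g,\mul T)=\inf\{\,\|g'\|:\{f,g'\}\in T\,\}.
\]
For the regular part, using $T_{\rm reg}\subset (T^{**})_{\rm reg}=(I-P)T^{**}$ and the identification $(T^{**})_{\rm reg}=(T_{\rm reg})^{**}$ from Theorem~\ref{jor}, the value $T_{\rm reg}f=(I-P)g$ is the same for every $g$ with $\{f,g\}\in T^{**}$, so $(I-P)g$ is the minimal-norm element of the fiber $\{g':\{f,g'\}\in T^{**}\}$ and
\[
 \|T_{\rm reg}f\|=\min\{\,\|g'\|:\{f,g'\}\in T^{**}\,\}.
\]
The analogous formulas hold with $T$ replaced by $S$.

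For the operator parts I would argue directly inside $T$ and $S$: given $\varepsilon>0$, choose $g$ with $\{f,g\}\in T$ and $\|g\|\le\|T_{\rm op}f\|+\varepsilon$; then $\{f,Cg\}\in CT\subset S$, so $Cg$ lies in the $S$-fiber over $f$ and $\|S_{\rm op}f\|\le\|Cg\|\le\|C\|(\|T_{\rm op}f\|+\varepsilon)$; letting $\varepsilon\downarrow 0$ gives \eqref{ass0a2}, and \eqref{ass0b2} follows at once by taking $S=S_{\rm op}$ (equivalently, $S$ an operator). For the regular parts the same scheme works after passing to closures: since $C$ is bounded, $CT\subset S$ yields $CT^{**}\subset S^{**}$ (approximate $\{f,g\}\in T^{**}$ by elements of $T$ and apply $C$); applying $C$ to the minimal-norm representative $g$ of the $T^{**}$-fiber over $f$ gives $\{f,Cg\}\in S^{**}$, whence $\|S_{\rm reg}f\|\le\|Cg\|\le\|C\|\,\|T_{\rm reg}f\|$, which is \eqref{ass0a}; again \eqref{ass0b} is the special case $S=S_{\rm reg}$.

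The main obstacle is the regular-part statement and, specifically, the necessity of passing to closures. If $\{f,g\}\in T$ is any representative, arguing naively inside $T$ would only bound $\|S_{\rm reg}f\|$ by $\|C\|\,\dist(g,\mul T)$, which can strictly exceed $\|C\|\,\dist(g,\mul T^{**})=\|C\|\,\|T_{\rm reg}f\|$ when $T$ is not closed (for instance when $\mul T=\{0\}$ while $\mul T^{**}\neq\{0\}$). The point is therefore that the minimal-norm element of the regular fiber is attained only in $T^{**}$, and that the boundedness of $C$ is exactly what lets one transport it into $S^{**}$; this is where the formula for $\|T_{\rm reg}f\|$ derived from Theorem~\ref{jor} does the essential work.
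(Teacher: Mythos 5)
Your argument is correct, but it is organized differently from the paper's, so a comparison is worthwhile. The paper proves \eqref{ass0a} by explicitly constructing a dominating operator: from $CT\subset S$ it deduces $CT^{**}\subset S^{**}$ (taking adjoints twice via \eqref{prod1}), concludes that $C$ maps $\mul T^{**}$ into $\mul S^{**}$, and then verifies directly that $[(I-R)C]\,T_{\rm reg}\subset S_{\rm reg}$, where $R$ is the orthogonal projection onto $\mul S^{**}$; the operator-part statement \eqref{ass0a2} is handled identically with $\mul T$, $\mul S$ and the projections onto $\cmul T$, $\cmul S$. You instead reduce both statements to the norm form of domination in Lemma~\ref{domlemma}, using the variational identities $\|T_{\rm op}f\|=\inf\{\,\|g'\|:\{f,g'\}\in T\,\}$ and $\|T_{\rm reg}f\|=\min\{\,\|g'\|:\{f,g'\}\in T^{**}\,\}$ and transporting a (near-)minimal fiber representative by $C$; your closure step $CT\subset S\Rightarrow CT^{**}\subset S^{**}$ is obtained by continuity of $C$ rather than by adjoints, which is equivalent. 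The trade-off is this: the paper's construction exhibits the dominating operators $(I-R)C$ and $(I-R_m)C$ explicitly, so preservation of contractive domination is immediate and Lemma~\ref{domlemma} is never invoked; your route hides the operator construction inside Lemma~\ref{domlemma} (whose proof supplies it by extending the densely defined map $C_0$), but in exchange the operator-part case stays entirely inside $T$ with an $\varepsilon$-argument (no closures needed there), and your closing remark correctly isolates the one genuinely delicate point, namely that for the regular part one must minimize over the fiber of $T^{**}$, since $\dist(g,\mul T)$ can strictly exceed $\dist(g,\mul T^{**})=\|T_{\rm reg}f\|$ when $T$ is not closed. Both proofs then obtain the maximality statements \eqref{ass0b} and \eqref{ass0b2} in the same way, as the special cases $S=S_{\rm reg}$ and $S=S_{\rm op}$ of the preservation statements.
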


\begin{proof}
Since $S \prec T$ there is an operator $C \in \bB(\sK)$ such that
$CT \subset S$. Now by \eqref{prod1} one sees that
$S^{*} \subset T^{*}C^{*}$, and again by \eqref{prod1}:
\[
   CT^{**} \subset (T^*C^{*})^* \subset S^{**}.
\]
In particular, this shows that $C$ maps $\mul T^{**}$ into $\mul
S^{**}$. Let $P$ be the orthogonal projection onto $\mul T^{**}$ and
let $R$ be the orthogonal projection onto $\mul S^{**}$. Write
$\{f,g\}=\{f, (I-P)g+Pg\} \in T$. Here  $Pg \in \mul T^{**}$ and one
concludes that
\[
 \{f,Cg\} = \{f, C(I-P)g+CPg\} \in S,
\]
where  $CPg \in \mul S^{**}$.
Hence it follows that
\[
\{f, (I-P)g\} \in T_{\rm reg} \quad \Rightarrow \quad  \{f,
(I-R)C(I-P)g\} \in S_{\rm reg}.
\]
Equivalently, $[(I-R)C ] T_{\rm reg} \subset S_{\rm reg}$, which
implies $S_{\rm reg}\prec T_{\rm reg}$. In addition, observe that if
$C$ is a contraction then also $(I-R)C$ is a contraction.

By \eqref{domin} one sees that $T_{\rm reg}$ is a closable operator
which is dominated by $T$. Now let $S=S_{\rm reg}$ be any closable
operator which is dominated by $T$: $S  \prec T$. Then \eqref{ass0a}
shows that $S=S_{\rm reg}  \prec T_{\rm reg}$.

Thus all the statements concerning the regular part are proven.

Now consider the operator parts $T_{\rm op}$ and $S_{\rm op}$. Let
$Q_m$ be the orthogonal projection from $\sK$ onto $\cmul T$ and let
$R_m$ be the orthogonal projection from $\sK$ onto $\cmul S$. Then
$CT \subset S$ shows that $C$ maps $\mul T$ into $\mul S$ and hence
also $\cmul T$ into $\cmul S$. Now replacing $P$ by $Q_m$ and $R$ by
$R_m$ in the first part of the proof shows that
\[
\{f, (I-Q_m)g\} \in T_{\rm op} \quad \Rightarrow \quad  \{f,
(I-R_m)C(I-Q_m)g\} \in S_{\rm op}.
\]
Equivalently, $[(I-R_m)C ] T_{\rm op} \subset S_{\rm op}$, which
implies $S_{\rm op}\prec T_{\rm op}$. In addition, if $C$ is a
contraction then also $(I-R_m)C$ is a contraction.

By \eqref{dominOp} $T_{\rm op}\prec T$ and if $S$ is an operator,
then $S=S_{\rm op}$. Hence, if $S$ is dominated by $T$, then
\eqref{ass0a2} shows that $S=S_{\rm op}  \prec T_{\rm op}$.
\end{proof}

Theorem \ref{regmax} singles out optimality of the Lebesgue
decomposition of $T$ in Theorem \ref{jor0} among all Lebesgue type
decompositions of $T$ in Theorem \ref{Lebtype2}.

\begin{corollary}\label{optim}
Let $T$ be a linear relation from a Hilbert space $\sH$ to a Hilbert
space $\sK$. Let $T=T_1+T_2$ be any Lebesgue type decomposition of
$T$, where $T_{1}$ is regular and $T_{2}$ is singular. Then
\[
T_1 \prec T_{\rm reg}.
\]
\end{corollary}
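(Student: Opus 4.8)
The plan is to invoke Theorem \ref{regmax}, which does essentially all the work, and merely connect it to the hypotheses of the corollary. The statement asks us to show that if $T=T_1+T_2$ is any Lebesgue type decomposition of $T$, where $T_1$ is regular (closable) and $T_2$ is singular, then $T_1 \prec T_{\rm reg}$. Since $T_1$ is regular, we have $T_1 = (T_1)_{\rm reg}$ by the equivalence \eqref{regu} in Corollary \ref{CorRegSing}. Thus the conclusion will follow from the maximality statement \eqref{ass0b} in Theorem \ref{regmax} with $S=T_1$, provided I can establish the hypothesis $T_1 \prec T$.

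First I would observe that $T_1$ arises as the operator part of an orthogonal range decomposition, namely $T_1 = (I-P_{\sM})T$ with $\sM=\cran T_2$, by Theorem \ref{Lebtype} (the converse direction). In particular, $T_1$ is obtained from $T$ by composing on the left with the bounded operator $I-P_{\sM} \in \bB(\sK)$, and $(I-P_{\sM})T \subset T_1$ holds as an identity (indeed equality). This is precisely the defining condition \eqref{prec} for domination in Definition \ref{domi}: taking $C=I-P_{\sM}$, which is a contraction, we have $CT \subset T_1$, so that $T_1 \prec T$ contractively. This is exactly the same mechanism by which $T_{\rm reg} \prec T$ was established in \eqref{domin}.

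With $T_1 \prec T$ in hand and $T_1=(T_1)_{\rm reg}$ closable, the maximality property \eqref{ass0b} of Theorem \ref{regmax} applies directly: any closable operator dominated by $T$ is dominated by $T_{\rm reg}$. Hence $T_1 = (T_1)_{\rm reg} \prec T_{\rm reg}$, which is the desired conclusion.

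I do not anticipate a genuine obstacle here, since the corollary is a near-immediate specialization of Theorem \ref{regmax}. The only point requiring a moment's care is verifying that a Lebesgue type decomposition $T=T_1+T_2$ does indeed yield $T_1=(I-P_{\sM})T$ so that $T_1 \prec T$; this is supplied verbatim by the converse part of Theorem \ref{Lebtype} (or equivalently by Lemma \ref{osd}). Once that representation is invoked, the domination $T_1\prec T$ is automatic and the rest is a single application of \eqref{ass0b}.
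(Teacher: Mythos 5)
Your proposal is correct and follows essentially the same route as the paper's own proof: represent $T_1=(I-P_{\sM})T$ via Lemma \ref{osd} (equivalently, the converse part of Theorem \ref{Lebtype}) to get the domination $T_1 \prec T$, then apply the maximality statement \eqref{ass0b} of Theorem \ref{regmax} using the fact that $T_1$ is regular. The only difference is cosmetic: you make explicit the identification $T_1=(T_1)_{\rm reg}$ via Corollary \ref{CorRegSing}, which the paper leaves implicit.
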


\begin{proof}
If $T=T_1+T_2$ is a Lebesgue type decompositions of $T$ then there
exists some orthogonal projection $P_{\sM}$ such that
$T_{1}=(I-P_{\sM})T$, which shows that the operator $T_{1}$ is
dominated by $T$: $T_{1} \prec T$. Since $T_{1}$ is regular one may
apply Theorem \ref{regmax} and obtain $T_{1} \prec T_{\rm reg}$.
\end{proof}

There is a similar optimality property for the weak Lebesgue
decomposition $T=T_{\rm op}+T_{\rm mul}$ in Theorem \ref{Tmlemma}
among all weak Lebesgue type decompositions described in Theorem
\ref{wLebtype}.

\begin{corollary}\label{optimOp}
Let $T$ be a linear relation from a Hilbert space $\sH$ to a Hilbert
space $\sK$. Let $T=T_1+T_2$ be any weak Lebesgue type decomposition
of $T$, where $T_1$ is an operator and $T_{2}$ is singular. Then
\[
 T_1 \prec T_{\rm op}.
\]
\end{corollary}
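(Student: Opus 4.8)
The plan is to mirror the argument already used for the regular case in Corollary~\ref{optim}, with the operator part $T_{\rm op}$ playing the role of the regular part $T_{\rm reg}$. First I would invoke the converse part of Theorem~\ref{wLebtype}: since $T=T_1+T_2$ is a weak Lebesgue type decomposition, the subspace $\sM=\cran T_2$ gives rise to an orthogonal projection $P_\sM$ in $\sK$ for which $T_1=(I-P_\sM)T$. As $I-P_\sM\in\bB(\sK)$ and $(I-P_\sM)T=T_1\subset T_1$, Definition~\ref{domi} applies with $C=I-P_\sM$ and shows directly that $T_1$ is dominated by $T$, that is, $T_1\prec T$; moreover this domination is contractive since $I-P_\sM$ is a contraction.

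The second step uses the hypothesis that $T_1$ is an operator. Then its operator part coincides with itself, $T_1=(T_1)_{\rm op}$, so that $T_1$ is precisely an operator dominated by $T$. At this point I would apply the maximality statement \eqref{ass0b2} of Theorem~\ref{regmax}, namely $S=S_{\rm op}\prec T\ \Rightarrow\ S\prec T_{\rm op}$, taking $S=T_1$. This yields $T_1\prec T_{\rm op}$, which is exactly the assertion of the corollary.

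Since the genuine content—preservation of domination under passing to operator parts—is already packaged in Theorem~\ref{regmax}, and the representation $T_1=(I-P_\sM)T$ is furnished by the parametrization in Theorem~\ref{wLebtype}, there is no real obstacle here. The only point that deserves care is verifying that the domination $T_1\prec T$ is witnessed by the concrete bounded operator $I-P_\sM$, so that $T_1$ legitimately qualifies as an operator dominated by $T$ to which \eqref{ass0b2} can be applied. This is entirely parallel to the regular case in Corollary~\ref{optim} and requires no new estimates or auxiliary constructions.
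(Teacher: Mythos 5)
Your proposal is correct and follows essentially the same route as the paper: obtain the representation $T_1=(I-P_\sM)T$ via an orthogonal projection (the paper cites Lemma~\ref{osd} directly, while you go through the converse part of Theorem~\ref{wLebtype}, which itself rests on that lemma), conclude $T_1\prec T$, and then apply the maximality property \eqref{ass0b2} of Theorem~\ref{regmax} with $S=T_1=(T_1)_{\rm op}$. The extra care you take in exhibiting $I-P_\sM$ as the concrete witness for the domination is exactly what the paper's proof implicitly uses.
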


\begin{proof}
If $T=T_1+T_2$ is a weak Lebesgue type decompositions of $T$ with
$T_1$ an operator, then by Lemma \ref{osd} there exists an
orthogonal projection $R$ such that $T_{1}=(I-R)T$. Hence, $T_{1}
\prec T$ and, since $T_{1}$ is an operator, an application of
\eqref{ass0b2} yields $T_{1} \prec T_{\rm op}$.
\end{proof}

\begin{remark}\label{maximality}
The optimality properties stated in the above corollaries for
$T_{\rm reg}$ and $T_{\rm op}$ hold in a slightly more general form.
Indeed, (the proofs of) the stated optimality properties do not use
any singularity assumption on the relation $T_2$. For instance, for
any distinguished orthogonal range decomposition $T=T_1+T_2$ of $T$
one has:
\[
 T_1 \prec T_{\rm op}.
\]
\end{remark}

Note that in Corollary \ref{optim} $T_{1} \prec T$ contractively and
hence also $T_{1} \prec T_{\rm reg}$ contractively. Since $T_{1}$
and $T_{\rm reg}$ are operators, Lemma \ref{domlemma} shows that
\[
\|T_1f\|\le  \|T_{\rm reg}f\|, \quad f \in \dom T.
\]
The maximality property of $T_{\rm reg}$ reflects the following
inequality for the associated projectors $(I-P)\geq (I-P_\sM)$,
which is clear from $\mul T^{**}\subset \sM$; cf. \eqref{Mleb11}.
Similarly, when $T_1$ in $T=T_1+T_2$ is an operator, one has $\mul
T\subset \ker (I-R)=\ran R$. Hence, $R\geq Q_m$ and $I-Q_m\geq I-R$,
which reflects the maximality property of $T_{\rm op}$ in Corollary
\ref{optimOp}. It should be mentioned that for a densely defined
operator $T$ an equivalent minimality property of the projection $P$
appears in \cite{Ota3}: $P\leq P_\sM$ when $T_2=P_\sM T$ is a
singular part
of $T$.\\

The maximality properties of $T_{\rm reg}$ and $T_{\rm op}$ imply
that the Lebesgue decomposition and the weak Lebesgue decomposition
are preserved under unitary similarity transforms; in fact these
results hold in the following slightly more general form.

\begin{proposition}\label{LebEquivalent}
Let $\sH_1$, $\sH_2$, $\sK_1$, and $\sK_2$ be Hilbert spaces and let
$T: \sH_1 \to \sK_1$ and $\wt T:\sH_2 \to \sK_2$ be linear relations
which are equivalent via two unitary operators $V:\sH_1\to\sH_2$ and
$U:\sK_1\to\sK_2$ in the sense that
\begin{equation}\label{simi1}
 \wt T = U T V.
\end{equation}
Then their Lebesgue decompositions $T=T_{\rm{reg}}+T_{\rm{sing}}$
and $\wt T=\wt T_{\rm{reg}}+ \wt T_{\rm{sing}}$ are also equivalent
via the same unitary operators $U$ and $V$:
\begin{equation}\label{simi2}
 \wt T_{\rm{reg}}=U T_{\rm{reg}}V \quad\text{and}\quad \wt T_{\rm{sing}}=U T_{\rm{sing}}V.
\end{equation}
Similarly their weak Lebesgue decompositions
$T=T_{\rm{op}}+T_{\rm{mul}}$ and $\wt T=\wt T_{\rm{op}}+ \wt
T_{\rm{mul}}$ are also equivalent via the same unitary operators $U$
and $V$:
\begin{equation}\label{simi2Op}
 \wt T_{\rm{op}}=U T_{\rm{op}}V \quad\text{and}\quad \wt T_{\rm{mul}}=U T_{\rm{mul}}V.
\end{equation}
\end{proposition}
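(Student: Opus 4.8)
The plan is to prove all three equivalences \eqref{simi2} and \eqref{simi2Op} by a single covariance computation, exploiting that the Lebesgue and weak Lebesgue decompositions are built from the projections onto $\mul T^{**}$ and $\cmul T$ (see \eqref{resi} and \eqref{Tm}), and that these defining data transform covariantly under the unitary equivalence. Throughout I would read the hypothesis \eqref{simi1} elementwise, namely $\wt T=\{\{Vh,Uk\}:\{h,k\}\in T\}$, so that $\wt T$ is exactly the image $WT$ of $T$ under the unitary $W:=V\times U$ on the product spaces, $W\{h,k\}=\{Vh,Uk\}$; likewise $USV$ means $WS$ for any relation $S$ from $\sH_1$ to $\sK_1$, which matches the right-hand sides of \eqref{simi2} and \eqref{simi2Op}.

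First I would record that $W$, being a unitary (hence an isometric homeomorphism) of the product spaces, commutes with closure and with the formation of multivalued parts. Explicitly $\wt T^{**}=\overline{WT}=W\overline{T}=WT^{**}$, and since $V$ is injective, $\{0,k_2\}\in WT^{**}$ holds iff $k_2=Uk$ with $\{0,k\}\in T^{**}$; hence
\[
 \mul\wt T^{**}=U(\mul T^{**}),\qquad \cmul\wt T=U(\cmul T),
\]
the second following the same way from $\mul\wt T=U(\mul T)$ together with $\overline{W(\cdot)}=W\overline{(\cdot)}$. As $U$ is unitary these images are again closed, and the orthogonal projections onto them are $\wt P=UPU^{*}$ and $\wt P_m=UP_mU^{*}$, where $P,P_m$ are the projections used in \eqref{resi} and \eqref{Tm}. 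This covariance of the defining projections is the only point needing genuine verification; I expect it to be the main (if modest) obstacle, since everything after it is pure substitution.

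With this in hand the result drops out. For $\{h,k\}\in T$ the corresponding element of $\wt T$ is $\{Vh,Uk\}$, and from $\wt PU=UPU^{*}U=UP$ I get $(I-\wt P)(Uk)=U(I-P)k$ and $\wt P(Uk)=UPk$. Therefore
\[
 \wt T_{\rm reg}=(I-\wt P)\wt T=\{\{Vh,U(I-P)k\}:\{h,k\}\in T\}=W\big((I-P)T\big)=UT_{\rm reg}V,
\]
and identically $\wt T_{\rm sing}=\wt P\wt T=W(PT)=UT_{\rm sing}V$, which is \eqref{simi2}. Replacing $P$ by $P_m$ and using $\wt P_mU=UP_m$ gives $\wt T_{\rm op}=(I-\wt P_m)\wt T=UT_{\rm op}V$ and $\wt T_{\rm mul}=\wt P_m\wt T=UT_{\rm mul}V$, which is \eqref{simi2Op}.

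Finally I would comment on the conceptual route signalled by the surrounding text, and why I would not rely on it for equality. Domination is preserved under the unitary equivalence directly from Definition \ref{domi} (if $CS_2\subset S_1$ then $(UCU^{*})(WS_2)\subset WS_1$), so applying Corollary \ref{optim} to the Lebesgue type decomposition $\wt T=UT_{\rm reg}V+UT_{\rm sing}V$ of $\wt T$ and to its pullback decomposition of $T$ yields mutual domination $UT_{\rm reg}V\prec\wt T_{\rm reg}$ and $\wt T_{\rm reg}\prec UT_{\rm reg}V$; but since $\prec$ is only a preorder (Theorem \ref{regmax} and the discussion before it), this alone does not force equality. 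In the regular case one can upgrade it: \eqref{chart} gives $T_{\rm reg}\subset T^{**}$, hence $UT_{\rm reg}V\subset WT^{**}=\wt T^{**}$, so by Theorem \ref{ExtUnirep} the decomposition $\wt T=UT_{\rm reg}V+UT_{\rm sing}V$ must be the Lebesgue decomposition of $\wt T$, giving equality. Since the weak analog of Theorem \ref{ExtUnirep} fails (as noted after Theorem \ref{uniOp}), this inclusion argument has no counterpart for $T_{\rm op}$, which is precisely why the uniform direct computation above is the cleaner line of proof for both statements.
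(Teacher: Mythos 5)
Your proof is correct, but it takes a genuinely different route from the paper. You compute the transformation law of the defining subspaces directly: the product unitary $W=V\times U$ commutes with closures and multivalued parts, so $\mul \wt T^{**}=U(\mul T^{**})$ and $\cmul \wt T=U(\cmul T)$, whence $\wt P=UPU^{*}$ and $\wt P_m=UP_mU^{*}$, and both \eqref{simi2} and \eqref{simi2Op} follow by substitution in a single uniform argument. The paper instead never computes $\mul\wt T^{**}$: it observes that $U(I-P)U^{*}\wt T+UPU^{*}\wt T$ is a Lebesgue type decomposition of $\wt T$, invokes the maximality property of the canonical projection (Remark \ref{maximality}, in the projector form $(I-\wt P)\ge U(I-P)U^{*}$ discussed after Corollary \ref{optimOp}), and then applies the same argument to $T=U^{*}\wt TV^{*}$ to reverse the inequality, forcing $\wt P=UPU^{*}$; the weak case is declared "similar". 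Note that your caveat about $\prec$ being only a preorder, while a fair warning against deducing equality from mutual domination of the regular parts, does not actually touch the paper's argument: the paper works with inequalities between orthogonal projections, where mutual inequality does yield equality. What each approach buys: the paper's proof showcases the optimality machinery of Section \ref{sec8} and would adapt to any canonical object characterized by such a maximality property, while yours is more elementary and self-contained, treats the Lebesgue and weak Lebesgue cases on exactly the same footing, and makes explicit the covariance facts ($\wt T^{**}=WT^{**}$, regularity and singularity preserved) that the paper's proof quietly relies on in the step labelled "clearly".
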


\begin{proof}
Let $P$ and $\wt P$ be the canonical projections corresponding to
the Lebesgue decompositions of $T$ and $\wt T$: $T=(I-P)T+PT$ and
$\wt T=(I-\wt P)\wt T+\wt P \wt T$. Then
\begin{equation}\label{simi3}
 (I-\wt P)\wt T+\wt P\wt T=U[(I-P)T+PT]V=U(I-P)U^*\wt T+UPU^*\wt T.
\end{equation}
Here $UPU^*$ is an orthogonal projection and clearly $U(I-P)U^*\wt
T=U(I-P)TV$ is regular and $UPU^* \wt T=U(PT)V$ is singular. Hence,
the righthand side of \eqref{simi3} is a Lebesgue type decomposition
of $\wt T$. By the maximality property of the $\wt T_{\rm{reg}}$ and
$I-\wt P$, see Remark~\ref{maximality}, one concludes that $(I-\wt
P)\ge U(I-P)U^*$.

On the other hand, by applying the above conclusion to $T = U^* \wt
T V^*$ one gets the inequality $(I-P)\ge U^*(I-\wt P)U$, or
equivalently, $U(I-P)U^*\ge (I-\wt P)$. Therefore, $(I-\wt
P)=U(I-P)U^*$ and $\wt P=UPU^*$ and, consequently,
\[
 (I-\wt P)\wt T=U(I-P)TV, \quad
 \wt P \wt T=UPTV,
\]
which proves \eqref{simi2}. The proof of \eqref{simi2Op} is similar.
\end{proof}

\subsection{A criterion for closability}
The following two theorems show that closability can be
characterized in terms of a sequence of operators which are
successively contractively dominated. The first theorem guarantees
closability.

\begin{theorem}\label{thmcl}
Let $T$ be a linear operator from a Hilbert space $\sH$ to a Hilbert space $\sK$
and assume that there exists a sequence of operators
$T_n\in \bB(\cdom T, \sK_{n})$ such that
 \begin{equation}\label{amono+}
 \|T_m f\| \le \|T_n f\|, \quad  f\in \cdom T, \quad m \leq n,
\end{equation}
and
\begin{equation}\label{ass0}
   \|T_{n}f\|  \nearrow \|Tf\|, \quad f \in \dom T.
\end{equation}
Then the operator $T$ is closable.
\end{theorem}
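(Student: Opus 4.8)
The plan is to reduce closability to a purely sequential statement and then settle it by a careful limit interchange. Recall that by \eqref{grij0} the closure is $\overline T=T^{**}$, so $T$ is closable (regular) precisely when $\mul T^{**}=\{0\}$, and $g\in\mul T^{**}$ means exactly that $\{0,g\}\in T^{**}$, i.e.\ there is a sequence $f_j\in\dom T$ with $f_j\to 0$ in $\sH$ and $Tf_j\to g$ in $\sK$. Thus I would fix such a sequence and aim to prove $g=0$.

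First I would record two elementary consequences of the hypotheses. Since $\|T_nf\|\nearrow\|Tf\|$ for every $f\in\dom T$, each term satisfies $\|T_nf\|\le\|Tf\|$ (this is just the contractive domination $T_n\prec T$, in the sense of Lemma~\ref{domlemma}); applied to the differences $f_j-f_k\in\dom T$ it yields $\|T_n(f_j-f_k)\|\le\|T(f_j-f_k)\|$. Moreover, because $Tf_j$ converges it is Cauchy, so given $\varepsilon>0$ there is an index $N$ with $\|T(f_j-f_k)\|<\varepsilon$ whenever $j,k\ge N$. Separately, for each fixed $n$ the operator $T_n\in\bB(\cdom T,\sK_n)$ is bounded and $f_k\to 0$, so $\|T_nf_k\|\le\|T_n\|\,\|f_k\|\to 0$ as $k\to\infty$.

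The crucial step, and the place where the genuine difficulty lies, is an interchange of the two limits $n\to\infty$ (where $\|T_nf_j\|\nearrow\|Tf_j\|$) and $k\to\infty$ (where $\|T_nf_k\|\to 0$): a naive interchange is false for arbitrary monotone double sequences, so the monotonicity alone is not enough. I would circumvent this by using the Cauchy estimate to decouple the two indices. Fixing $j\ge N$ and an arbitrary $n$, the triangle inequality gives, for every $k\ge N$,
\[
 \|T_nf_j\|\le\|T_n(f_j-f_k)\|+\|T_nf_k\|\le\varepsilon+\|T_nf_k\|.
\]
Letting $k\to\infty$ with $n$ fixed annihilates the last term and produces $\|T_nf_j\|\le\varepsilon$ for every $n$; taking the supremum over $n$ then gives $\|Tf_j\|\le\varepsilon$ for all $j\ge N$.

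Finally, passing to the limit $j\to\infty$ yields $\|g\|=\lim_j\|Tf_j\|\le\varepsilon$, and since $\varepsilon>0$ was arbitrary we conclude $g=0$. Hence $\mul T^{**}=\{0\}$, so $T$ is closable. I expect the only subtle point to be exactly the limit interchange above; the mechanism that makes it work is the combination of two facts that pull in opposite directions but complement each other, namely the uniform bound $\|T_n(f_j-f_k)\|\le\|T(f_j-f_k)\|$ coming from the contractive domination together with the boundedness of each individual $T_n$, which together control $\|T_nf_j\|$ by $\varepsilon$ \emph{uniformly in $n$}. Once this uniform control is in place the conclusion is immediate.
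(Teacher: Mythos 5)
Your proof is correct, and it takes a genuinely different, more elementary route than the paper. The paper's argument runs through its domination machinery: from $\|T_nf\|\le\|Tf\|$ on $\dom T$ it invokes Lemma \ref{domlemma} to produce contractions $C_n\in\bB(\sK,\sK_n)$ with $C_nT\subset T_n$, passes to adjoints twice via \eqref{prod1} to obtain $C_nT^{**}\subset T_n$, deduces $\mul T^{**}\subset\ker C_n$ and hence $C_nT=C_nT_{\rm reg}\subset T_n$, which yields $\|T_nf\|\le\|T_{\rm reg}f\|\le\|Tf\|$ on $\dom T$; taking the supremum over $n$ gives the norm identity $\|Tf\|=\|T_{\rm reg}f\|$, so $T_{\rm sing}=0$ and closability follows from Corollary \ref{CorRegSing}. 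You instead verify $\mul T^{**}=\{0\}$ directly by a sequential $\varepsilon$-argument, decoupling the two limits with the Cauchy estimate $\|T_n(f_j-f_k)\|\le\|T(f_j-f_k)\|<\varepsilon$ (valid uniformly in $n$) and the boundedness of each fixed $T_n$, which kills $\|T_nf_k\|$ as $k\to\infty$. Both proofs ultimately rest on the same two ingredients --- contractive domination of each $T_n$ by $T$ and boundedness of each individual $T_n$ --- but yours is self-contained (no adjoint calculus, no regular/singular parts) and makes transparent exactly what is needed: only \eqref{ass0} together with the boundedness of each $T_n$; the monotonicity \eqref{amono+} on all of $\cdom T$ plays no role in your argument, and in fact even monotonicity in $n$ is not needed, only $\|T_nf\|\le\|Tf\|$ and $\sup_n\|T_nf\|=\|Tf\|$ on $\dom T$. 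What the paper's approach buys in exchange is the stronger intermediate statement $\|Tf\|=\|T_{\rm reg}f\|$, which exhibits the singular part of the Lebesgue decomposition as the precise obstruction to closability and thus ties the criterion to the central theme of the paper.
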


\begin{proof}
 Assume that $T$ is a linear operator from $\sH$ to $\sK$
and let $T_{\rm{reg}}$ be the regular part of $T$ as defined in
Theorem \ref{jor0}. By \eqref{ass0} one has $\|T_n f\| \le \|Tf\|$
for all $f \in \dom T$. According to Lemma~\ref{domlemma} there
exists a contraction $C_n\in\bB(\sK,\sK_{n})$ such that $C_n T
\subset T_n$ for all $n\in\dN$. By \eqref{prod1} this implies that
$(T_{n})^{*} \subset T^{*}(C_{n})^{*}$, and then again by
\eqref{prod1} one obtains
\begin{equation}\label{CT**}
   C_n T^{**} \subset (T^*C_n)^*  \subset T_n.
\end{equation}
In particular, if $\{0,\varphi\} \in T^{**}$, then $\{0,C_{n} \varphi\} \in T_{n}$,
so that $C_{n}\varphi=0$.
Thus one concludes that $\mul T^{**}\subset \ker C_n$.
Now, let $P$ be the orthogonal projection from $\sK$ onto $\mul
T^{**}$. Then $C_nP=0$ which combined with Theorem \ref{jor0} leads
to
\[
 C_n T=C_n[(I-P)T+PT]=C_n(I-P)T= C_n T_{\rm reg},
\]
Hence, $C_n T_{\rm reg} \subset T_n$ for all $n\in\dN$ and by
Lemma~\ref{domlemma} this implies that
\begin{equation}\label{ass2}
 \|T_n f\| \le \|T_{\rm{reg}}f\| \le \|Tf\|, \quad f \in \dom T.
\end{equation}
Via \eqref{amono+} and \eqref{ass0} one may take the supremum over
$n \in \dN$ in \eqref{ass2} to obtain the equality
\[
 \|Tf\|=\|T_{\rm{reg}}f\|, \quad f\in\dom T.
\]
This implies that $T_{\rm{sing}}=0$ and hence $T$ is closable, cf.
Corollary \ref{CorRegSing}.
\end{proof}

The second theorem is a converse to Theorem \ref{thmcl+}:
each closable operator can be approximated by a sequence
as in \eqref{amono+} and \eqref{ass0}.

\begin{theorem}\label{thmcl+}
Let $T$ be a closable linear operator from a Hilbert space
$\sH$ to a Hilbert space $\sK$.
Then there exists a sequence $T_n \in \bB(\cdom T,\sH)$
with the properties \eqref{amono+} and \eqref{ass0}.
\end{theorem}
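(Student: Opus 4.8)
The plan is to construct the approximating sequence from the spectral truncations of the modulus of the closure $T^{**}$. Since $T$ is closable, $A:=T^{**}$ is a closed operator, densely defined in the Hilbert space $\cdom T^{**}=\cdom T$. Consequently $A^{*}A$ is a nonnegative selfadjoint relation in $\cdom T$ whose operator part admits, as recalled in Section~\ref{sec2}, the spectral representation
\[
 |A|:=(A^{*}A)_{\rm s}^{1/2}=\int_0^\infty \lambda\,dE_\lambda,
\]
where $(E_\lambda)$ is a spectral family of orthogonal projections in $\cdom T$. The basic fact I would rely on is the polar-type identity $\dom A=\dom |A|$ together with $\|Af\|=\||A|f\|$ for all $f\in\dom A$.

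Next I would set, for each $n\in\dN$,
\[
 T_n:=\int_0^n \lambda\,dE_\lambda,
\]
which is an everywhere defined bounded selfadjoint operator on $\cdom T$ with $\|T_n\|\le n$; in particular $T_n\in\bB(\cdom T,\sH)$. For arbitrary $f\in\cdom T$ the spectral calculus gives
\[
 \|T_nf\|^2=\int_0^n \lambda^2\,d\|E_\lambda f\|^2,
\]
and since the integrand is nonnegative while the interval of integration increases with $n$, the quantities $\|T_nf\|$ are nondecreasing in $n$. This is precisely the monotonicity \eqref{amono+}.

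To verify \eqref{ass0} I would restrict attention to $f\in\dom T$. As $T\subset T^{**}=A$, such an $f$ belongs to $\dom A=\dom|A|$ and satisfies $Tf=Af$, whence $\|Tf\|=\||A|f\|$. Monotone convergence in the spectral integral then yields
\[
 \|T_nf\|^2=\int_0^n \lambda^2\,d\|E_\lambda f\|^2
 \;\nearrow\;\int_0^\infty \lambda^2\,d\|E_\lambda f\|^2=\||A|f\|^2=\|Tf\|^2,
\]
the limit being finite exactly because $f\in\dom|A|$. Hence $\|T_nf\|\nearrow\|Tf\|$ for every $f\in\dom T$, which is \eqref{ass0}.

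The argument is short once the spectral representation of $|T^{**}|$ is available, so I do not anticipate a genuine obstacle. The only step needing care is the identity $\|Tf\|=\||A|f\|$ on $\dom T$: it relies on the modulus decomposition of the closed operator $A=T^{**}$ and on the inclusion $T\subset T^{**}$, and it is what transfers the monotone spectral convergence from $|A|$ back to the original operator $T$.
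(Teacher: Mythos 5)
Your proposal is correct and follows essentially the same route as the paper: since $(T^{**})^{*}=T^{*}$, your relation $A^{*}A$ is exactly the paper's $T^{*}T^{**}$, and both proofs take the spectral truncations $T_n=\int_0^n\lambda\,dE_\lambda$ of $(T^{*}T^{**})_{\rm s}^{1/2}$, proving \eqref{amono+} by spectral calculus and \eqref{ass0} by monotone convergence. The only cosmetic difference is that you justify the key identity $\|T^{**}f\|=\|(T^{*}T^{**})_{\rm s}^{1/2}f\|$ via the polar decomposition of $T^{**}$, whereas the paper phrases the same fact as the closure of the form $(Tf,Tg)$, citing Kato.
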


\begin{proof}
Assume that $T$ is a closable operator from $\sH$ to $\sK$. Then
$T^*T^{**}$ is a nonnegative selfadjoint relation in $\sH$ with
$\dom (T^*T^{**})^{1/2}=\dom T^{**}\supset \dom T$, see
Section~\ref{sec2.2}. Let $(T^*T^{**})_{\rm{s}}$ be the orthogonal
operator part of $T^*T^{**}$ and let
\[
(T^*T^{**})_{\rm{s}}^{1/2}=\int_0^\infty \lambda\,dE_\lambda
\]
be the spectral representation of $(T^*T^{**})_{\rm{s}}^{1/2}$. Note
that $\dom (T^*T^{**})_{\rm{s}}^{1/2}$ is dense in $\cdom T$ and the
orthogonal projections $E_\lambda \in \bB(\cdom T, \sH)$. By means
of the corresponding spectral family, define the sequence of
selfadjoint operators $T_n\in \bB(\cdom T,\sH)$ by
 \[
 T_n=\int_0^n \lambda\,dE_\lambda, \quad n\in\dN.
\]
Then clearly $\|T_mf\|\le \|T_nf\|$ for $m\le n$, which gives
\eqref{amono+}. Observe, that if $P$ is the orthogonal projection
onto the closed linear subspace
\[
\cdom T=\cdom T^{**} =\cdom (T^*T^{**})_{\rm{s}},
\]
then $PT^*=T_{\rm{s}}$ is the operator part of $T^*$ and similarly
$PT^*T^{**}=(T^*T^{**})_{\rm{s}}$. The closure of the form
$(Tf,Tg)$, $f,g\in\dom T$, is given by
\[
 (T^{**}f,T^{**}g)=((T^*T^{**})_{\rm{s}}^{1/2}f,(T^*T^{**})_{\rm{s}}^{1/2}g),
 \quad
 f,g\in\dom T^{**},
\]
see e.g. \cite{Kato}, \cite{HSn2015}. Now by the construction of the
sequence $T_n$ one obtains
\[
 \|T_nf\| \nearrow \|(T^*T^{**})_{\rm{s}}^{1/2}f\|
 =\|T^{**}f\|, \quad f\in \dom T^{**}.
\]
Therefore, \eqref{ass0} has been shown and this completes the proof.
\end{proof}

The convergence of the operators $T_n$ to the operator $T$ in
Theorem \ref{thmcl}  and Theorem \ref{thmcl+} is in fact formulated
in terms of contractive domination. Notice that this kind of
convergence is closely related to the convergence of the sequence of
forms $\st_N(f,g):=(T_n f,T_n g)$ and in that way it can also be
related to some other notions of convergence, like graph and strong
resolvent convergence; see \cite{BHSW}.

\subsection{A metric characterization of closable operators}

Let $T$ be a relation from a Hilbert space $\sH$ to a Hilbert space
$\sK$. Then the linear relation $(I-P)T^{**}$ where $P$ is the
orthogonal projection onto $\mul T^{**}$ has a useful metric
property; cf. \cite{HSeSnSz}. For the convenience of the reader a
complete proof is provided.

\begin{lemma}\label{earlier}
Let $T$ be a relation from a Hilbert space $\sH$ to a Hilbert space
$\sK$ and let $P$ be the orthogonal projection from $\sK$ onto $\mul
T^{**}$. Then  for all $\{f,f'\} \in T^{**}$:
\begin{equation}\label{drei+}
\|(I-P)f'\|^{2}=\sup_{h \in \dom T}
\left\{  \inf_{\{g,g'\} \in T} \left\{\|g'\|^2+\|h-g\|^2\right\} - \|f-h\|^2  \right\}.
\end{equation}
\end{lemma}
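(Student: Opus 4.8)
The plan is to read the right-hand side as a distance computation and then evaluate it by functional calculus for the operator part of $T^{**}$. Set $S:=(I-P)T^{**}=(T^{**})_{\rm reg}$, which by Theorem~\ref{jor} is a closed operator with $\dom S=\dom T^{**}$ and $\ran S\subset \cdom T^*=(\mul T^{**})^\perp$; in particular $Sf=(I-P)f'$ whenever $\{f,f'\}\in T^{**}$. First I would note that for every $h\in\dom T$,
\[
 \|g'\|^2+\|h-g\|^2=\|\{g,g'\}-\{h,0\}\|^2,\qquad \{g,g'\}\in T,
\]
so the inner infimum equals $\dist(\{h,0\},T)^2=\dist(\{h,0\},T^{**})^2$, the last step because the distance to a set equals the distance to its closure. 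Using the componentwise orthogonal decomposition $T^{**}=S\hoplus(\{0\}\times\mul T^{**})$ together with the fact that the orthogonal projection of $\{h,0\}$ onto $\{0\}\times\mul T^{**}$ is $\{0,0\}$, this distance collapses to the distance to the graph of $S$:
\[
 Q(h):=\inf_{\{g,g'\}\in T}\big(\|g'\|^2+\|h-g\|^2\big)=\inf_{x\in\dom S}\big(\|x-h\|^2+\|Sx\|^2\big).
\]

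Next I would compute $Q(h)$. Viewing $S$ as a closed, densely defined operator in the Hilbert space $\cdom T^{**}$, the operator $B:=S^*S$ is nonnegative and selfadjoint there, $I+B$ has a bounded everywhere defined inverse, and $\dom S=\dom B^{1/2}$. A completion-of-the-square argument shows that for $h\in\cdom T^{**}$ the infimum is attained at $x_0=(I+B)^{-1}h\in\dom B\subset\dom S$ and equals
\[
 Q(h)=\|h\|^2-\big((I+B)^{-1}h,h\big).
\]
Substituting this and expanding $\|f-h\|^2$, the terms $\|h\|^2$ cancel, and for $\{f,f'\}\in T^{**}$ the summand in the supremum becomes
\[
 \phi(h):=Q(h)-\|f-h\|^2=2\RE(f,h)-\big((I+B)^{-1}h,h\big)-\|f\|^2 .
\]
The upper bound is then immediate: choosing $x=f$ (allowed since $f\in\dom T^{**}=\dom S$) in the infimum defining $Q(h)$ gives $Q(h)\le\|f-h\|^2+\|Sf\|^2$, whence $\phi(h)\le\|Sf\|^2$ for every $h$, and so the supremum is at most $\|Sf\|^2=\|(I-P)f'\|^2$.

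For the matching lower bound I would maximize $\phi$. The unconstrained maximizer is formally $h=(I+B)f$, which returns exactly the value $\|Sf\|^2$, but $f$ lies only in $\dom B^{1/2}$ and need not lie in $\dom B$, so this $h$ may fail to exist; an approximation is therefore required, and this is the main obstacle. Let $\{E_\lambda\}$ be the spectral family of $B$ in $\cdom T^{**}$ and set $d\mu(\lambda)=d\|E_\lambda f\|^2$, so that $\int_0^\infty\lambda\,d\mu=\|B^{1/2}f\|^2=\|Sf\|^2<\infty$. Taking the truncations $h_n=(I+B)E([0,n])f\in\cdom T^{**}$ and using the functional calculus one computes
\[
 \phi(h_n)=\int_0^n\lambda\,d\mu-\mu\big((n,\infty)\big)\ \nearrow\ \|Sf\|^2 .
\]

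Finally, since $(I+B)^{-1}$ is bounded, $\phi$ is continuous on $\cdom T^{**}$ for the $\sH$-norm, and since $\dom T$ is dense in $\cdom T=\cdom T^{**}$, the supremum of $\phi$ over $\dom T$ equals its supremum over $\cdom T^{**}$, which the truncations show to be at least $\|Sf\|^2$. Combined with the upper bound this proves \eqref{drei+}. The delicate points are precisely the passage that handles $f\notin\dom B$ (forcing the spectral truncation rather than a single optimal $h$) and the recovery of the supremum over the possibly smaller set $\dom T$ via density and the continuity of $\phi$; the two reductions at the start (distance interpretation and the collapse from $T^{**}$ to its operator part $S$) are routine once the orthogonality of $S$ and $\{0\}\times\mul T^{**}$ is used.
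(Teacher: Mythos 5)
Your proof is correct, but it takes a genuinely different route from the paper's. The paper never leaves the graph space: it equips $T^{**}$ with the graph norm, splits it as $T^{**}=(T^{**}\hominus(\{0\}\times\mul T^{**}))\hoplus(\{0\}\times\mul T^{**})$, recognizes $\|Pf'\|^2$ as the squared graph distance from $\{f,f'\}$ to the first summand, and then unwinds that single distance through two density arguments (density of $\imath^*(\dom T)$ in $T^{**}\hominus(\{0\}\times\mul T^{**})$, where $\imath\{f,f'\}=f$, and density of $T$ in $T^{**}$); no operator $S^*S$, no spectral theory. You instead evaluate the right-hand side directly: after the two reductions you correctly call routine (inner infimum $=$ distance to $T^{**}$ $=$ distance to the graph of $S=(T^{**})_{\rm reg}$, the collapse being justified because the projection of $\{h,0\}$ onto $\{0\}\times\mul T^{**}$ vanishes), you invoke von Neumann's theorem for $B=S^*S$ in $\cdom T^{**}$, derive the closed formula $Q(h)=\|h\|^2-\bigl((I+B)^{-1}h,h\bigr)$ by completing the square (legitimate since $\dom S=\dom B^{1/2}$ and $\|Sx\|=\|B^{1/2}x\|$), and exhibit near-maximizers $h_n=(I+B)E([0,n])f$ by spectral truncation, finishing with continuity of $\phi$ and density of $\dom T$ in $\cdom T=\cdom T^{**}$. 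All of these steps are sound, including the two delicate ones you flag. As for the comparison: the paper's argument is the more elementary and self-contained one; your resolvent is in fact hidden inside its map $\imath^*$, since $\imath^*h=\{(I+B)^{-1}h,\,S(I+B)^{-1}h\}$, but the paper never needs to identify it, so it gets by with pure Hilbert-space geometry and orthogonal projections. Your argument pays for the spectral theorem and von Neumann's theorem but delivers more: an explicit value for the inner infimum (a formula of independent interest, close to the parallel-sum applications alluded to at the end of Section \ref{sec8}), explicit asymptotically optimal $h_n$, and monotone convergence $\phi(h_n)\nearrow\|(I-P)f'\|^2$, so the supremum is not merely identified but constructively approximated.
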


\begin{proof}
Provide the graph of $T^{**}$ with the graph norm $\|\cdot \|_{\overline{T}}$.
It is clear that the
graph of $T$ is dense in $T^{**}$ in this sense. The Hilbert space
$T^{**}$ is now written as an orthogonal sum of two closed linear subspaces:
\begin{equation}\label{graph}
 T^{**}=(T^{**} \hominus (\{0\} \times \mul T^{**})) \hoplus (\{0\} \times \mul T^{**})
\end{equation}
Let $Q$ be the orthogonal projection onto $\{0\} \times \mul T^{**}$.

\textit{Step 1}. Let $\{f,f'\} \in T^{**}$ and
define the elements $\{h,h'\}=(I-Q)\{f,f'\}$
and $\{0,k'\}=Q\{f,f'\}$.
First note that by definition for all $t' \in \mul T^{**}$
\[
 0=(\{h,h'\}, \{0, t'\})_{\overline{T}} =(h', t'),
 \]
which gives $h' \in (\mul T^{**})^{\perp}$.
Therefore it follows from the decomposition
\[
 \{f,f'\}=\{h,h'\}+\{0, k'\},
 \quad \{h,h'\} \in T^{**} \hominus (\{0\} \times \mul T^{**}),
 \quad k' \in \mul T^{**},
\]
that $f'=h'+k'$ and $k'=P f'$.
Hence one concludes that $Q\{f,f'\} =\{0, P f'\}$ and
\begin{equation}\label{vijf}
\|Q\{f,f'\} \|_{\overline{T}}^{2} = \|\{0,Pf'\}
\|_{\overline{T}}^{2} = \|Pf'\|^{2}.
\end{equation}

\textit{Step 2}. Introduce the linear mapping $\imath :T^{**} \to \cdom T^{**}$ by
\[
 \imath\{f,f'\}=f, \quad \{f,f'\} \in T^{**}.
\]
Then clearly $\iota$ is a contraction from the Hilbert space
$T^{**}$ to the Hilbert space $\cdom T^{**}$ with $\ran \imath= \dom
T^{**}$ is dense in $\cdom T^{**}$ and $\ker \imath=\{0\}\times\mul
T^{**}$.
The adjoint $\imath^{*}$ maps $\cdom T^{**}$ into $T^{**} \hominus
(\{0\} \times \mul T^{**})$ and $\imath^{*}(\dom T)$ is dense in
$T^{**} \hominus (\{0\} \times \mul T^{**})$, since $\dom T$ is
dense in $\cdom T^{**}$.

\textit{Step 3}. Let $\{f, f'\} \in T^{**}$.
Then it follows from the orthogonal decomposition \eqref{graph} that
 \[
 \|Q \{f,f'\}\|_{\overline{T}}^2= \inf \left\{\,
\|\{f,f'\}-\{l,l'\}\|_{\overline{T}}^2:\,
\{l,\l'\} \in T^{**} \hominus (\{0\} \times \mul T^{**})
 \,\right\}.
\]
However, since $ \imath^*(\dom T)$ is dense in
 $T^{**} \hominus (\{0\} \times \mul T^{**})$,
it follows that
\begin{equation}
\label{einz}
\begin{split}
 \left\|Q \{f,f'\}\right\|^2_{\overline{T}} &
 = \inf_{h \in \dom T} \|\{f,f'\}-\imath^* h\|_{\overline{T}}^{2} \\
 &= \inf_{h \in \dom T}
\left\{(f,f)+(f',f')-(f,h)-(h,f)
+\left( \imath^* h, \imath^*h\right)_{\overline{T}} \right\}
\\
&=\|f'\|^2+\inf_{h \in \dom T}
\left\{ \|f-h\|^2-\|h\|^2 +\left\| \imath^* h \right\|^2_{\overline{T}}
\right\}.
\end{split}
\end{equation}

\textit{Step 4}. Since $T$ is dense in $T^{**}$, every
element of the form $\imath^* h$, $h \in \dom T$, can be
approximated by elements in $T$, which leads to
\begin{equation}
\label{zwei}
\begin{split}
 0&= \inf_{\{g,g'\} \in T}
 \left\{ \left\|\imath^* h-\{g,g'\} \right\|^2_{\overline{T}}  \right\} \\
  &= \left\| \imath^* h \right\|^2_{\overline{T}}+\inf_{\{g,g'\}
\in T} \left\{
(g,g)+(g',g')-(g,h)-(h,g) \right\}     \\
  &= -\|h\|^2 + \left\| \imath^* h \right\|^2_{\overline{T}}
+\inf_{\{g,g'\} \in T}
  \left\{\|g'\|^2+\|h-g\|^2\right\}.
\end{split}
\end{equation}
\textit{Step 5}.
Combine the identities \eqref{einz}, \eqref{zwei}, and
\eqref{vijf} to get
\[
  \|Pf'\|^{2}=\|f'\|^2+\inf_{h \in \dom T} \left\{ \|f-h\|^2
-\inf_{\{g,g'\} \in T} \left\{\|g'\|^2+\|h-g\|^2\right\}  \right\},
\]
which gives \eqref{drei+}.
\end{proof}

The following characterization of closable operators
is now a straightforward consequence of Lemma \ref{earlier}.
It may be seen as an alternative to Theorem \ref{thmcl}
and Theorem \ref{thmcl+}.

\begin{theorem}\label{dik}
Let $T$ be a linear relation from a Hilbert space $\sH$ to a Hilbert
space $\sK$. Then $T$ is regular if and only if for all $\{f, f'\}
\in T$:
\begin{equation}\label{vierv}
\|f' \|^2=
\sup_{h \in \dom T}
\left\{  \inf_{\{g,g'\} \in T} \left\{\|g'\|^2+\|h-g\|^2\right\} - \|f-h\|^2  \right\},
\end{equation}
\end{theorem}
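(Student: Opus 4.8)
The plan is to obtain the statement as an immediate consequence of Lemma~\ref{earlier} combined with the regularity criterion \eqref{regu} in Corollary~\ref{CorRegSing}. The central observation is that the right-hand side of \eqref{vierv} is \emph{literally} the expression computed in Lemma~\ref{earlier}: writing $P$ for the orthogonal projection from $\sK$ onto $\mul T^{**}$, that lemma asserts
\[
 \sup_{h \in \dom T}\left\{\inf_{\{g,g'\} \in T}\left\{\|g'\|^2+\|h-g\|^2\right\}-\|f-h\|^2\right\}=\|(I-P)f'\|^2
\]
for every $\{f,f'\}\in T^{**}$, and in particular for every $\{f,f'\}\in T$, since $T\subset T^{**}$.

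First I would use this identification to rewrite \eqref{vierv}, for a fixed pair $\{f,f'\}\in T$, as the equality $\|f'\|^2=\|(I-P)f'\|^2$. Because $P$ is an orthogonal projection, Pythagoras gives $\|f'\|^2=\|Pf'\|^2+\|(I-P)f'\|^2$, so this equality is equivalent to $\|Pf'\|^2=0$, i.e.\ $Pf'=0$. Consequently, \eqref{vierv} holds for \emph{all} $\{f,f'\}\in T$ if and only if $Pf'=0$ for all $\{f,f'\}\in T$; recalling the definition $T_{\rm sing}=PT$ in \eqref{resi}, this says exactly that the singular part vanishes, $T_{\rm sing}=0$.

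Finally I would invoke the equivalence \eqref{regu} of Corollary~\ref{CorRegSing}, namely that $T$ is regular if and only if $T_{\rm sing}=0$, to close the argument. For the forward direction one may alternatively observe directly that regularity means $\mul T^{**}=\{0\}$, hence $P=0$, so the right-hand side of \eqref{drei+} collapses to $\|f'\|^2$ for every $\{f,f'\}\in T^{**}\supset T$, which yields \eqref{vierv}.

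I do not anticipate a genuine obstacle: all the analytic content has already been packaged into Lemma~\ref{earlier}, and what remains is a chain of equivalences. The only point deserving a word of care is that \eqref{vierv} is imposed only on pairs in $T$, not on all of $T^{**}$; but this is harmless, since the vanishing of $Pf'$ on $\ran T$ is precisely the assertion $T_{\rm sing}=PT=0$, so no additional closure or density step beyond what is built into Lemma~\ref{earlier} is required.
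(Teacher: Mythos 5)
Your proposal is correct and follows essentially the same route as the paper's own proof: both reduce \eqref{vierv} via Lemma~\ref{earlier} to the condition $Pf'=0$ for all $\{f,f'\}\in T$, i.e.\ $T_{\rm sing}=PT=0$, and then invoke Corollary~\ref{CorRegSing}. The only difference is that you spell out the Pythagoras step $\|f'\|^2=\|Pf'\|^2+\|(I-P)f'\|^2$, which the paper leaves implicit.
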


\begin{proof}
Recall from Corollary \ref{CorRegSing} that
$T$ is regular if and only if $T=T_{\rm reg}$,
which is equivalent to $T_{\rm sing}=0$.
Furthermore, this last condition is the same as
\[
 Pf'=0 \quad \mbox{for all} \quad \{f,f'\} \in T.
\]
An application of Theorem \ref{earlier} completes the proof.
\end{proof}

Another situation in which Lemma \ref{earlier} can be used
is when the relation is a range space relation. Then there are
useful applications
in terms of parallel sums and differences for pairs of
bounded linear operators; cf. \cite{HSn2018}.


\begin{thebibliography}{33}

\bibitem{AG81}
N.I. Achieser and I.M. Glasman,
\textit{Theorie der linearen Operatoren im Hilbertraum},
8th edition, Akademie Verlag, Berlin, 1981.

\bibitem{An}
T.~Ando,
``Lebesgue-type decomposition of positive operators'',
Acta Sci. Math. (Szeged), 38 (1976), 253--260.

\bibitem{Arens}
R.~Arens,
``Operational calculus of linear relations'',
Pacific J. Math., 11 (1961), 9--23.

\bibitem{Arl}
Yu.~Arlinski\u{\i},
``On the mappings connected with parallel addition
of nonnegative operators'',
Positivity, 21 (2017), 299-327.

\bibitem{BHSW}
J. Behrndt, S.~Hassi, H.S.V. de~Snoo, and R. Wietsma,
``Monotone convergence theorems for semibounded operators
and forms with applications",
Proc. Royal Soc. Edinburgh, 140A (2010), 927--951.

\bibitem{FW}
P.~Fillmore and J.~Williams, ``On operator ranges", Adv. Math., 7
(1971), 254--281.

\bibitem{GK}
I.~Gohberg and M.G.~Kre\u{\i}n, ``The basic propositions on defect
numbers, root numbers and indices of linear operators'', Uspekhi
Mat. Nauk., 12 (1957), 43--118 (Russian) [English translation:
Transl. Amer. Math. Soc. (2), 13 (1960), 185--264].

\bibitem{HSeSnLeb}
S.~Hassi, Z.~Sebesty\'en, and H.S.V. de~Snoo,
``Lebesgue type decompositions for nonnegative forms'',
J. Functional Analysis, 257 (2009), 3858--3894.

\bibitem{HSeSnSz}
S.~Hassi, Z.~Sebesty\'en, H.S.V.~de~Snoo, and F.H.~Szafraniec, ``A
canonical decomposition for linear operators and linear relations'',
Acta Math. Hungarica, 115 (2007), 281--307.

\bibitem{HSn2015}
S.~Hassi and H.S.V. de~Snoo,
``Factorization, majorization, and domination for linear relations'',
Annales Univ. Sci. Budapest, 58 (2015), 53--70.

\bibitem{HSn2018}
S.~Hassi and H.S.V. de~Snoo,
``Lebesgue type decompositions and Radon-Nikodym derivatives for
pairs of bounded linear operators'',

\bibitem{HSnSz}
S.~Hassi, H.S.V. de~Snoo, and F.H.~Szafraniec,
``Componentwise and canonical decompositions of linear relations",
Dissertationes Mathematicae, 465, 2009 (59 pages).

\bibitem{I89a}
S.~Izumino,
``Decomposition of quotients of bounded operators with
respect to closability and Lebesgue-type decomposition of positive
operators",
Hokkaido Math. J., 18 (1989), 199--209.


\bibitem{J}
P.E.T.~Jorgensen,
``Unbounded operators; perturbations and commutativity problems'',
J. Functional Analysis, 39 (1980), 281--307.

\bibitem{Kato}
T.~Kato,
\textit{Perturbation theory for linear operators},
Springer-Verlag, Berlin, 1980.
\bibitem{Ota1}
S.~\^Ota,
``Decomposition of unbounded derivations in operator algebras'',
T\^ohoku Math. J., 33 (1981), 215--225.

\bibitem{Ota2}
S.~\^Ota,
``Closed linear operators with domain containing their range",
Proc. Edinburgh Math. Soc., 27 (1984), 229--233.

\bibitem{Ota3}
S.~\^Ota,
``On a singular part of an unbounded operator",
Zeitschrift f\"ur Analysis und ihre Anwendungen, 7 (1987), 15--18.

\bibitem{PS}
D. Popovici and Z. Sebesty\'en, ``Factorizations of linear
relations'', Adv. Math., 233 (2013), 40--55.

\bibitem{STT1}
Z. Sebesty\'{e}n, Zs. Tarcsay, and T. Titkos,
``Lebesgue decomposition theorems'',
Acta Sci. Math. (Szeged), 79(1-2) (2013), 219--233.

\bibitem{STT2}
Z. Sebesty\'{e}n, Zs. Tarcsay, and T. Titkos,
``A short-type decomposition of forms'',
Operators Matrices 9(4)  (2015), 815--830.

\bibitem{S3}
B.~Simon,
``A canonical decomposition for quadratic forms with applications
to monotone convergence theorems'',
J. Functional Analysis, 28 (1978), 377--385.

\bibitem{Stone}
M.H.~Stone,
\textit{Linear transformations in Hilbert space and their applications to analysis},
American Mathematical Society Colloquium Publications, Vol. 15, 1932.

\bibitem{Weid}
J. Weidmann,
\textit{Lineare Operatoren im Hilbertr\"aumen},
B.~G.~Teubner, Stuttgart, 1976.

\bibitem{Wer}
D. Werner,
\textit{Funktionalanalysis},
3. Auflage, Springer, 2000.

\end{thebibliography}
\end{document}